\documentclass[12pt]{article}
\usepackage{latexsym}
\usepackage{amsthm}
\usepackage{amsmath,amssymb,amsfonts,bm,amsbsy,bbm}
\usepackage{epsfig}
\usepackage{graphicx,rotating,lscape}
\usepackage{xcolor}
\usepackage{soul}
\usepackage{hyperref}
\usepackage{verbatim}
\usepackage{natbib}
\usepackage{multirow,color}
\usepackage{geometry}
\usepackage{setspace}
\usepackage{subfigure}
\usepackage{bbm}
\usepackage[flushleft]{threeparttable}
\usepackage{enumitem}
\theoremstyle{plain}

\newtheorem{theorem}{Theorem}[section]
\newtheorem{lemma}[theorem]{Lemma}
\newtheorem{corollary}[theorem]{Corollary}
\newtheorem{proposition}[theorem]{Proposition}
\theoremstyle{definition}
\newtheorem{definition}[theorem]{Definition}

\newtheorem{assumption}{Assumption} 
\theoremstyle{remark}

\newcommand\lo[1]{_{#1}}

\def\ka{\kappa}
\newcommand\ca[1]{{\cal{#1}}}
\def\real{\mathbb{R}}

\def\red#1{{\color{red}{#1}}}

\newcommand{\indep}{\;\, \rule[0em]{.03em}{.65em} \hspace{-.45em}\rule[-.05em]{.65em}{.03em} \hspace{-.45em}\rule[0em]{.03em}{.65em}\;\,}

\def\wh{\widehat}
\def\wt{\widetilde}
\def\n{\nonumber}   
\def\var{\mbox{var}}
\def\cov{\mbox{cov}}

\def\trans{^{\top}}

\def\sumi{\sum_{i=1}^n}

\def\sumj{{\sum_{j=1}^n}}

\def\hs{\lo {\mathrm{HS}}}
\def\op{\lo {\mathrm{OP}}}
\def\reg{\lo {\mathrm{reg}}}
\def\res{\lo {\mathrm{res}}}

\def\real{\mathbb R}
\def\natural{\mathbb N}

\def\var{\mathrm{var}}

\def\sgn{\mathrm{sgn}}
\def\ran{\mathrm{ran}}
\def\ker{\mathrm{ker}}
\def\span{\mathrm{span}}
\def\cran{\overline{\mathrm{ran}}}

\def\cov{\mathrm{cov}}

\def\0{{\bf 0}}
\def\1{{\bf 1}}

\def\R{{\mathbb{R}}}

\def\b0{{\bf 0}}

\def\mF{\mathcal{F}}

\def\mH{\mathcal{H}}

\def\OP{\text{OP}}
\def\HS{\text{HS}}

\def\bse{\begin{eqnarray*}}
\def\ese{\end{eqnarray*}}
\def\be{\begin{eqnarray}}
\def\ee{\end{eqnarray}}
\def\bsq{\begin{equation*}}
\def\esq{\end{equation*}}
\def\bq{\begin{equation}}
\def\eq{\end{equation}}

\allowdisplaybreaks

\title{ On Sharpened Convergence Rate of Generalized Sliced Inverse Regression for Nonlinear Sufficient Dimension Reduction}
\author{Chak Fung Choi, Yin Tang\thanks{Corresponding author. Email address: yin.tang@uky.edu}\ \  and Bing Li}
\date{}

\begin{document}
\maketitle
\begin{abstract}
    Generalized Sliced Inverse Regression (GSIR) is one of the most important methods for nonlinear sufficient dimension reduction. As shown in \cite{li2017nonlinear}, it enjoys a convergence rate that is independent of the dimension of the predictor, thus avoiding the curse of dimensionality. In this paper we establish an improved convergence rate of GSIR under additional mild eigenvalue decay rate and smoothness conditions. Our convergence rate can be made arbitrarily close to $n^{-1/3}$ under appropriate decay rate and smoothness parameters. As a comparison, the rate of \cite{li2017nonlinear} is $n^{-1/4}$ under the best conditions. This improvement is significant  because, for example, in a semiparametric estimation problem involving an infinite-dimensional nuisance parameter, the convergence rate of the estimator of the nuisance parameter is often required to be faster than $n^{-1/4}$ to guarantee desired semiparametric properties such as asymptotic efficiency. This can be achieved by the improved convergence rate, but not by the original rate. The sharpened convergence rate can also be established for GSIR in more general settings, such as functional sufficient dimension reduction. 
\end{abstract}

 \textbf{Keywords}: sufficient dimension reduction, Generalized Sliced Inverse Regression, reproducing kernel Hilbert space, linear operator, convergence rate

\def\red#1{{\color{red}{#1}}}

\section{Introduction}
For regression problems with    high-dimensional predictors,  sufficient dimension reduction (SDR) provides  a powerful framework for finding a low-dimensional representation of the predictor that preserves all the information useful for predicting the response. The theoretical foundation of SDR builds on the concept of sufficiency, which posits that certain functions of the predictors capture all the information about the response. Consequently, the remaining predictors can be ignored without any loss of information.  SDR facilitates data visualization via low-dimensional representations of the predictors, performs data summarization without losing information, and enhances prediction accuracy by alleviating the curse of dimensionality. 

Classic linear SDR assumes the existence of a $p \times d$ matrix $B$, with $d<p$, such that $Y$ is independent of $X$ conditioning on $B \trans X$. In symbols, 
\be\label{eq:linear-sdr}
Y \indep X | B^\top X.
\ee
If this relation holds, the low-dimensional representation $B^\top X$ {  serves} as a sufficient  predictor for $Y$ since the conditional distribution of $Y$ given $X$ is fully characterized by $B^\top X$. Note that matrix $B$ in \eqref{eq:linear-sdr} is only identifiable up to an invertible right transformation. Thus, the identifiable parameter to estimate is the column space of $B$, denoted by $\span(B)$. The central space, denoted by $\ca S_{Y|X}$, is defined as the intersection of all subspaces spanned by the columns of $B$ that satisfy \eqref{eq:linear-sdr}. It is the target of estimation in linear SDR, which was first proposed and studied by \cite{Li1991}. See \cite{li2018sufficient} and \cite{ma2013review} for details. 
Many methods have been proposed to find $\ca S_{Y|X}$, 
such as sliced inverse regression (SIR, \cite{Li1991}),
sliced average variance estimate (SAVE,\cite{Cook1991}), contour regression (CR, \cite{Li2005}) and directional regression (DR, \cite{li2007}).

A closely related problem, called SDR for conditional mean, assumes the existence of a $p \times d$ matrix $B$, with $d<p$, such that
\be\label{eq:linear-sdr-mean}
E(Y|X) = E(Y|B^\top X),
\ee
which was proposed  in \cite{Cook2002} and \cite{cook-li-2004}.  Clearly, \eqref{eq:linear-sdr-mean} is a weaker condition compared to \eqref{eq:linear-sdr}, which is useful in many regression settings. The target of estimation in this problem is the central mean space, denoted by $\ca S_{E(Y|X)}$, which is the intersection of all the subspaces spanned by the columns of $B$ satisfying \eqref{eq:linear-sdr-mean}. 
Methods that target the central mean space  include, among others, ordinary least squares (OLS,  \cite{li-duan-1989}), principal Hessian directions (PHD,  \cite{li-1992}), iterative Hessian transformation (IHT,  \cite{Cook2002,cook-li-2004}),  outer product gradient (OPG, \cite{xia-tong-li-zhu-2002}) and minimum average variance estimation (MAVE, \cite{xia-tong-li-zhu-2002}).

The methodology of sufficient  dimension reduction  was extended to a nonlinear setting by several authors, where $B \trans X$ is replaced by a set of nonlinear functions. See 
 \cite{wu2008}, \cite{wang-yu-2008}, \cite{yeh-huang-lee-2009}, \cite{li-artemiou-chiaromonte-2011},   \cite{lee2013}, and \cite{li2017nonlinear}.  In the following we adopt the reporducing kernel Hilbert space (RKHS) framework articulated in \cite{li2018sufficient}. Suppose there exist functions 
  $f_1, \dots, f_d: \R^p \to \R$, with $d < p$   such that 
\be\label{eq:nonlinear-sdr}
Y \indep X | f_1(X), \dots, f_d(X).
\ee
In the above relation, the functions $f_1, \ldots, f_d$ are not identifiable, because any one-to-one transformation of $(f_1 (X), \ldots, f_d (X))$ would satisfy the same relation. The identifiable object is  the $\sigma$-field generated by $f_1(X), \dots, f_d(X)$, denoted by $\sigma\{f_1(X),\ldots, f_d(X)\}$. The goal of nonlinear SDR is to recover this  $\sigma$-field, or any set of functions generating this $\sigma$-field.  Two main classes of approaches to this  nonlinear SDR problem \eqref{eq:nonlinear-sdr} have been developed:  RKHS-based methods proposed by \cite{li-artemiou-chiaromonte-2011}, \cite{lee2013} and \cite{li2017nonlinear}, and deep learning based methods via various neural network structures, including \cite{liang2022nonlinear,sun2022kernel}, \cite{chen2024deep}, \cite{tang2025belted} and \cite{xu2025conditional}. 

Among the RKHS based methods, the most commonly used method is Generalized Sliced Inverse Regression (GSIR), which was first proposed by \cite{lee2013}. By leveraging nonlinear transformations of the predictor, GSIR is capable of achieving a better performance in dimension reduction than linear SDR methods. Consequently, it has been applied in various fields, such as graphical models \cite{li2024sufficient}, reliability analysis \cite{yin2022active}, and distributional data regression \cite{zhang2024nonlinear}. Furthermore, \cite{li2017nonlinear} extends GSIR to f-GSIR, a functional variant of GSIR, where both $X$ and $Y$ are random functions lying in Hilbert spaces instead of Euclidean spaces. 

A critically important property of GSIR is its convergence rate, as it is often used in conjunction with downstream nonparametric regression, conditional density estimation, and graphical estimation. The convergence rate of GSIR will directly affect the accuracy of downstream analysis. So far, the only published convergence rate we know of is that given in \cite{li2017nonlinear}, which is 
\be
\epsilon_n^{\beta \wedge 1} + \epsilon_n^{-1}n^{-1/2} \label{eq:epsilon n beta wedge}
\ee
where $\beta > 0$ is a constant representing the degree of smoothness between the predictor and the response, and $\epsilon_n \to 0$ is the  Tikhonov regularization sequence of constants.  

Inspired by the recent work of \citet{sang2026nonlinear}, 
which established convergence rates for nonlinear 
function-on-function regression in RKHS settings, 
we impose an additional assumption on the decay rate 
of the eigenvalues of the covariance operator of $X$. 
Under this strengthened condition, we obtain an improved 
convergence rate for GSIR given by
\be\label{eq:faster_rate}
 n^{-1/2} \epsilon_n^{(\beta\land 1)-1} 
 + \epsilon_n^{\beta\land 1} 
 + n^{-1} \epsilon_n^{-(3\alpha+1)/(2\alpha)} 
 + n^{-1/2}\epsilon_n^{-(\alpha+1)/(2\alpha)},
\ee
where $\alpha > 1$ characterizes the polynomial decay 
rate of the eigenvalues of the covariance operator of $X$.
It will be shown that the convergence rate (\ref{eq:faster_rate})  is always faster than the convergence rate (\ref{eq:epsilon n beta wedge}) in  the  ranges of $\beta$ and $\alpha$. In fact, as shown in \cite{li2017nonlinear}, under the condition $\beta \geq 1$, the optimal choice of  $\epsilon_n$ yields the rate in \eqref{eq:epsilon n beta wedge} to be $n^{-1/4}$. 
Similarly, as will be shown in this paper,  under $\beta \geq 1$ and arbitrarily large $\alpha$, the optimal choice of $\epsilon_n$ makes the rate in \eqref{eq:faster_rate} arbitrarily close to $n^{-1/3}$.
This improvement is crucially important because in many semiparametric estimation problems, the convergence rate of estimation of the nuisance parameters is required to be faster than $n^{-1/4}$ in order for the estimation of the parameter of interest to achieve the $n^{-1/2}$ rate or the semiparametric efficiency bounds. Thus, for semiparametric applications where SDR plays a part in estimating the infinite-dimensional nuisance parameter, the convergence rate of \cite{li2017nonlinear} is not enough, but the improved convergence rate will suffice. This was the original motivation for developing this faster rate.

The rest of the paper is organized as follows. Section \ref{sec:background} gives an overview of the theory of nonlinear sufficient dimension reduction, the regression operator, and two versions of the generalized sliced inverse regression methods (GSIR-I and GSIR-II)  to estimate the central $\sigma$-field. In Sections \ref{sec:rate} and \ref{sec:rate GSIR-II} we derive the improved convergence rates of GSIR-I and GSIR-II, respectively. In Section \ref{sec:fgsir} we give a brief outline of how to extend the results to the functional SDR setting. Some concluding remarks are made in Section \ref{sec:conclusion}. To save space, all proofs of the theoretical results are placed in the Appendix.

\section{Backgrounds of regression operators and GSIR} \label{sec:background}

\subsection{Mathematical background and notations} \label{sec:notations}

Let $(\Omega, \mF, P)$ be a probability space. Let $\Omega_X$ and $\Omega_Y$ be subsets of $\R^p$ and $\R^q$, and $X:\Omega \to \Omega_X$, $Y:\Omega \to \Omega_Y$ be Borel random vectors of dimension $p$ and $q$, respectively. Let $P_X$ and $P_Y$ denote the distributions of $X$ and $Y$. Let $L_2(P_X)$ denote the space of all measurable functions of $X$ having finite second moment under $P_X$. Define $L_2(P_Y)$ analogously. Let $\ka_X$ and $\ka_Y$ be  positive definite kernels on $\Omega_X \times \Omega_X$ and $\Omega_Y \times \Omega_Y$, respectively, and let $\mH_X$ and $\mH_Y$ be the corresponding reproducing kernel Hilbert spaces. For a Hilbert space $\mH$, we use $\langle \cdot, \cdot \rangle_{\ca H}$ to denote the inner product in $\mH$, and use $\|\cdot\|_\mH$ to denote the norm induced by this inner product. Furthermore, for two Hilbert spaces $\mH_1, \mH_2$,  let $\ca B(\mH_1, \mH_2)$ denote the collection of all bounded linear operators from $\mH_1$ to $\mH_2$. For a bounded linear operator $A \in \ca B(\mH_1, \mH_2)$, we use $\ker(A)$ to denote the kernel or null space  of $A$; that is, $\ker (A) = \{ x: A(x) = 0 \}$. We use $\ran(A)$ to denote the range of $A$; that is,  $\ran (A) = \{ A (x): x \in \ca H_1 \}$. Since $\ran(A)$ is a linear subspace that may not be closed, we use $\overline{\ran}(A)$ to denote the closure of $\ran(A)$. We use $A^*$ to denote the adjoint operator of $A$. For a subset $\ca V$ of a Hilbert space, we use $\span(\ca V)$ to denote the linear span of $\ca V$. We use $\overline{\span}(\ca V)$ to denote the closure of $\span(\ca V)$.

For an operator $A \in \ca B(\mH_1, \mH_2)$ that may not be invertible, we define its Moore-Penrose inverse as follows. Let $ \breve A$ denote the restriction of $A$ to $\ker(A)^\perp$. Then, $\breve A$ is surjective from  $\ker(A)^\perp$ onto $\ran(A)$. The Moore-Penrose inverse of $A$, denoted by $A^\dagger$ $:\ran(A) \to \ker(A)^\perp$, is defined by $A^\dagger y = \breve A^{-1} y$ for each $y \in \ran(A)$. In general, if $\ran(A)$ is not closed, the Moore-Penrose inverse need not be a bounded linear operator. For a comprehensive treatment of the Moore-Penrose inverse in Hilbert spaces, see \cite{hsing2015theoretical}, Section 3.5. Given two arbitrary positive sequences $a_n$ and $b_n$, we write $a_n \prec b_n$ if $a_n/b_n \to 0$, write
$a_n \preceq b_n$ if $a_n/b_n$ is bounded, and write $a_n \asymp b_n$ if $a_n \preceq b_n$ and $b_n \preceq a_n$. For two real numbers $a$ and $b$, we write  $a \wedge b$ for $\min(a,b)$.

\subsection{Regression operator} \label{sec:operator}

The construction of GSIR relies on the regression operator in RKHS. In this subsection, we introduce the concepts of the regression operator under the RKHS setting. For detailed discussions of regression operators, see, for example, \cite{lee2016}, \cite{li2018linear} and Chapter~13 of \cite{li2018sufficient}. 
We make the following assumptions about the RKHS's  $\mH_X, \mH_Y$ and the kernels $\ka_X, \ka_Y$. 

\begin{assumption}\label{ass:dense}
    $\mH_X$ and  $\mH_Y$ are dense subsets of $L_2(P_X)$ and $L_2(P_Y)$ modulo constants, that is, for any $f \in L_2(P_X)$, there is a sequence $\{f_n\}\subset \mH_X$ such that $\var[f_n(X) - f(X)] \to 0$, and a similar condition holds for $\mH_Y$.
\end{assumption}

\begin{assumption} \label{ass:k bdd}
    $\ka_X:\Omega_X \times \Omega_X\to \R$ and $\ka_Y:\Omega_Y\times \Omega_Y \to \R$ are bounded and continuous kernels.
\end{assumption}
An immediate consequence of Assumption~\ref{ass:k bdd} is  $E\{\ka_X(X,X)\} < \infty$, and $E\{\ka_Y(Y,Y)\} < \infty$, which ensures the mean elements and covariance operators are well defined in the RKHS's $\mH_X$ and $\mH_Y$. Specifically, the mean elements in $\mH_X$ and $\mH_Y$ are defined as
\bse
\mu_X = E\{\ka_X(\cdot, X)\} \in \mH_X \quad \text{and} \quad \mu_Y = E\{\ka_Y(\cdot, Y)\} \in { \mH_Y}.
\ese
The covariance operators in $\mH_X$ and $\mH_Y$ are defined as
\bse
\Sigma_{XX} &=& E [ \{ \ka_X(\cdot,X) - \mu_X\} \otimes \{\ka_X(\cdot,X) - \mu_X\} ] :\mH_X \to \mH_X, \\
\Sigma_{YY} &=& E [ \{ \ka_Y(\cdot,Y) - \mu_Y\} \otimes \{\ka_Y(\cdot,Y) - \mu_Y\} ] :\mH_Y \to \mH_Y,
\ese
the cross-covariance operator from $\mH_Y$ to $\mH_X$ is defined as
\bse
\Sigma_{XY} &=& E [ \{ \ka_X(\cdot,X) - \mu_X\} \otimes \{\ka_Y(\cdot,Y) - \mu_Y\} ] : \mH_Y \to \mH_X,
\ese
and the cross-covariance operator from $\mH_X$ to $\mH_Y$ is defined as its adjoint operator $\Sigma_{YX} = \Sigma_{XY}^*$.

Assumption~\ref{ass:k bdd} is stronger than the conditions typically imposed in the sufficient dimension reduction literature, and also stronger than those required for the covariance operator to be well defined. We impose this stronger condition primarily to facilitate the proof of the sharpened convergence rate. The boundedness of the kernels immediately implies the following embedding conditions, which are often taken as  explicit assumptions.  See, for example, \cite{lee2013}, \cite{li2017nonlinear}.  This assumption is mild and it is satisfied by commonly used kernels such as the Gaussian and Laplace kernels. 
\begin{proposition} \label{pro:var-bdd}
    Under Assumption~\ref{ass:k bdd}, there are constants $C_1 > 0$ and $C_2 > 0$ such that, for all $f \in \mH_X$, and $g \in \mH_Y$,  $\var\{f(X)\} \leq C_1 \|f\|_{\mH_X}^2$ and $\var\{g(Y)\} \leq C_2 \|g\|_{\mH_Y}^2 $. 
\end{proposition}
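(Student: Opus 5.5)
The plan is to combine the reproducing property with the Cauchy--Schwarz inequality in $\mH_X$, and then use boundedness of the kernel to remove the dependence on $X$. The argument for $g \in \mH_Y$ is identical, so I describe only the $X$ case.

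First, I fix $f \in \mH_X$. By the reproducing property, $f(x) = \langle f, \ka_X(\cdot, x)\rangle_{\mH_X}$ for every $x \in \Omega_X$. Cauchy--Schwarz then gives
\begin{equation*}
|f(x)|^2 \leq \|f\|_{\mH_X}^2 \, \|\ka_X(\cdot,x)\|_{\mH_X}^2 = \|f\|_{\mH_X}^2 \, \ka_X(x,x).
\end{equation*}
This also shows that $f(X)$ is square integrable.

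Second, I bound the variance by the second moment, $\var\{f(X)\} \leq E\{f(X)^2\}$. Combining this with the pointwise inequality yields
\begin{equation*}
\var\{f(X)\} \leq E\{\ka_X(X,X)\}\,\|f\|_{\mH_X}^2.
\end{equation*}
Assumption~\ref{ass:k bdd} gives $\sup_{x,x'} |\ka_X(x,x')| = M_X < \infty$. Hence I may take $C_1 = M_X$, or more sharply $C_1 = E\{\ka_X(X,X)\}$, which is finite. The constant must be positive; if $E\{\ka_X(X,X)\}$ happened to be zero, I would simply take any positive constant, since the bound then holds trivially.

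The only point needing care is measurability of $f(X)$, so that the expectation is defined. This follows because continuity of $\ka_X$ makes every element of $\mH_X$ continuous: elements of $\mH_X$ are limits in $\mH_X$-norm of finite combinations of $\ka_X(\cdot,x_i)$. Convergence in $\mH_X$-norm implies uniform convergence by the pointwise bound above, since $\sup_x \ka_X(x,x) \le M_X$, and uniform limits of continuous functions are continuous. Consequently $f(X)$ is a Borel measurable random variable. I do not expect any real obstacle beyond this remark.

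Alternatively, I could note that $\var\{f(X)\} = \langle f, \Sigma_{XX} f\rangle_{\mH_X} \le \|\Sigma_{XX}\|_{\mathrm{OP}} \|f\|_{\mH_X}^2$, with $\|\Sigma_{XX}\|_{\mathrm{OP}} \le E\{\ka_X(X,X)\}$. The direct argument above avoids needing the operator identity.
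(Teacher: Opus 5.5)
Your proof is correct and follows essentially the same route as the paper: the reproducing property and Cauchy--Schwarz give $|f(x)|^2 \le \|f\|_{\mH_X}^2 \ka_X(x,x)$, and then $\var\{f(X)\} \le E\{f(X)^2\}$ together with boundedness of the kernel gives the bound, with the paper taking $C_1 = \sup_x \ka_X(x,x)$. Your remarks that $f(X)$ is measurable (because elements of $\mH_X$ are continuous) and that $C_1$ can be kept strictly positive are small refinements the paper leaves implicit.
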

By Proposition~\ref{pro:var-bdd} and the Riesz representation theorem,  it follows that the mean elements  $\mu_X$ and $\mu_Y$ are the unique elements in $\mH_X$ and $\mH_Y$ such that 
\bse
\langle f, \mu_X \rangle_{\mH_X} = E\{f(X)\} \, \text{ for all $f \in \mH_X$}, \quad \text{and} \quad  \langle g, \mu_Y \rangle_{\mH_Y} = E\{g(Y)\} \, \text{ for all $g \in \mH_Y$}.
\ese
Moreover, $\Sigma_{XX}$, $\Sigma_{YY}$, $\Sigma_{XY}$, $\Sigma_{YX}$ are  the unique operators that satisfy 
\bse
&&\langle f, \Sigma_{XX} f' \rangle_{\mH_X} = \cov\{f(X), f'(X)\}, \quad 
\langle g, \Sigma_{YY} g' \rangle_{\mH_Y} = \cov\{g(Y), g'(Y)\}, \\
&&\langle f, \Sigma_{XY} g \rangle_{\mH_X} = \langle g, \Sigma_{YX} f \rangle_{\mH_Y} = \cov\{f(X), g(Y)\}, 
\ese
for all $f,f' \in \mH_X$ and $g,g' \in \mH_Y$.
To define the regression operator, we also need the following assumption.
\begin{assumption}\label{ass:range}
     $\ran(\Sigma_{XY}) \subseteq \ran(\Sigma_{XX})$.
\end{assumption}
Assumption~\ref{ass:range} is mild,  as it is slightly stronger than $\ran(\Sigma_{XY}) \subseteq \overline{\ran}(\Sigma_{XX})$, which always holds. This assumption is also proposed as part of Theorem~13.1 of \cite{li2018sufficient} and Assumption~3 of \cite{li2017nonlinear}. Under this condition, the operator
\be\label{eq:reg-op}
R_{XY} = \Sigma_{XX}^\dagger \Sigma_{XY}
\ee
is well defined, since
the domain of $\Sigma_{XX}^\dagger$ is $\ran(\Sigma_{XX})$. This operator is called the regression operator. As argued in \cite{li2018sufficient} and \cite{li2017nonlinear}, while the Moore-Penrose inverse $\Sigma_{XX}^\dagger$ is typically unbounded, it is nevertheless reasonable to assume that $\Sigma_{XX}^\dagger \Sigma_{XY}$ is bounded. This pertains to assuming a certain smoothness between the relation of $X$ and $Y$. We need the boundedness so that the regression operator can be meaningfully estimated at the sample level.

\begin{assumption} \label{ass:rbd}
    $R_{XY}$ is a bounded operator.
\end{assumption}
By definition, the adjoint operator of $R_{XY}$, $R_{XY}^* = \Sigma_{YX}\Sigma_{XX}^\dagger$, is a mapping from $\ran(\Sigma_{XX})$ to $\overline{\ran}(\Sigma_{YX})$. Under Assumption~\ref{ass:rbd}, its domain can be extended to $\overline{\ran}(\Sigma_{XX})$ by the Bounded Linear Transformation (BLT) theorem (see, for example, Theorem~1.7 of \cite{reed1980methods}). Henceforth, we use $R_{XY}^*$ to denote the extended adjoint regression operator.
As shown in the following proposition, $\cran(\Sigma_{XX})$ can be explicitly written as
\bse
\mH_X^0 = \overline{\text{span}}\{\kappa_X(\cdot, x) - \mu_X: x \in \Omega_X\}.
\ese
\begin{proposition}\label{prop:kernel}
 Under Assumption~\ref{ass:k bdd}, $\ker(\Sigma_{XX}) = (\mH_X^0)^\perp$ and $\overline{\ran}(\Sigma_{XX}) = \mH_X^0$.
\end{proposition}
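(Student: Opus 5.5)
We first characterize $\ker(\Sigma_{XX})$ and then obtain $\overline{\ran}(\Sigma_{XX})$ from it. Since $\Sigma_{XX}$ is a bounded self-adjoint operator on $\mH_X$, the standard identity $\overline{\ran}(\Sigma_{XX}) = \ker(\Sigma_{XX}^*)^\perp = \ker(\Sigma_{XX})^\perp$ holds. Because $\mH_X^0$ is a closed subspace, $((\mH_X^0)^\perp)^\perp = \mH_X^0$. Hence the second claim follows immediately from the first.

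For the first claim, let $f \in \mH_X$. Since $\Sigma_{XX}$ is positive semidefinite, $\Sigma_{XX} f = 0$ if and only if $\langle f, \Sigma_{XX} f\rangle_{\mH_X} = 0$. To see the nontrivial direction, note that $\Sigma_{XX}^{1/2} f = 0$ implies $\Sigma_{XX} f = 0$. By the covariance identity for $\Sigma_{XX}$, $\langle f, \Sigma_{XX} f\rangle_{\mH_X} = \var\{f(X)\}$. By the reproducing property and the definition of $\mu_X$,
\[
f(X) - E f(X) = \langle f, \ka_X(\cdot,X) - \mu_X\rangle_{\mH_X}.
\]
So $f \in \ker(\Sigma_{XX})$ if and only if $\langle f, \ka_X(\cdot,x)-\mu_X\rangle_{\mH_X} = 0$ for $P_X$-almost every $x$.

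The main step is to upgrade "almost every $x$" to "every $x \in \Omega_X$". Only the latter gives $f \perp \mH_X^0$, since orthogonality to a set is the same as orthogonality to its closed span. Here we use Assumption~\ref{ass:k bdd}. Because $\ka_X$ is continuous and bounded, every $f \in \mH_X$ is continuous, and $x \mapsto \ka_X(\cdot,x)$ is continuous into $\mH_X$. Thus $h(x) = \langle f, \ka_X(\cdot,x)-\mu_X\rangle_{\mH_X} = f(x) - E f(X)$ is a continuous function of $x$. A continuous function vanishing $P_X$-a.e. vanishes on the support of $P_X$.

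We therefore take $\Omega_X$ to be the support of $P_X$, or assume every nonempty open subset of $\Omega_X$ has positive $P_X$-measure. This is implicit in the paper's setup and is where the main care is needed. Under it, $h \equiv 0$ on $\Omega_X$, so $f \perp \ka_X(\cdot,x)-\mu_X$ for all $x$, and hence $f \in (\mH_X^0)^\perp$.

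Conversely, if $f \in (\mH_X^0)^\perp$, then $h(x) = 0$ for all $x$. Therefore $\var\{f(X)\} = E h(X)^2 = 0$, and so $\Sigma_{XX} f = 0$. This establishes $\ker(\Sigma_{XX}) = (\mH_X^0)^\perp$, and the second claim follows by the orthogonal-complement argument in the first paragraph.
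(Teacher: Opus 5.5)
Your proposal follows the same route as the paper: reduce to $\ker(\Sigma_{XX})=(\mH_X^0)^\perp$ using self-adjointness and the closedness of $\mH_X^0$, then identify $\ker(\Sigma_{XX})$ with $\{f:\var\{f(X)\}=0\}$ via $f(x)-E\{f(X)\}=\langle f,\ka_X(\cdot,x)-\mu_X\rangle_{\mH_X}$. Where you differ, you are more careful than the paper: it passes directly from ``$=0$ almost surely'' to $f\in(\mH_X^0)^\perp$, whereas you supply the continuity of $f$ and the requirement that $\Omega_X$ lie in the support of $P_X$, and that requirement is genuinely needed (with the Laplace kernel on $\Omega_X=\R$ and $X$ uniform on $[0,1]$, a smooth bump supported in $[2,3]$ lies in $\ker(\Sigma_{XX})$ but not in $(\mH_X^0)^\perp$).
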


The following result establishes an important connection between the conditional expectation $E\{\ka_Y(\cdot,Y) \mid X\}$ and the kernel  $\ka_X(\cdot,X)$, thereby justifying the terminology of the regression operator. 
\begin{lemma}\label{lem:ce formula}
    Under Assumptions \ref{ass:dense}--\ref{ass:rbd}, we have 
    \bse
    E\{\ka_Y(\cdot, Y)\mid X\} - \mu_Y = R_{XY}^*\{\ka_X(\cdot, X) - \mu_X\} = \Sigma_{YX}\Sigma_{XX}^\dagger \{\ka_X(\cdot, X) - \mu_X\}.
    \ese
\end{lemma}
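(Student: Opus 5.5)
Both sides of the identity are random elements of $\mH_Y$, so I will compare them through inner products with arbitrary $g \in \mH_Y$; a pointwise argument in $Y$ cannot be used here. First I make sense of the left side. By Assumption~\ref{ass:k bdd}, $\ka_Y(\cdot,Y)$ is a bounded $\mH_Y$-valued random element, so $E\{\ka_Y(\cdot,Y)\mid X\}$ exists as a conditional Bochner expectation. It satisfies $\langle g, E\{\ka_Y(\cdot,Y)\mid X\}\rangle_{\mH_Y} = E\{g(Y)\mid X\}$ almost surely for each $g$. On the right side, $\ka_X(\cdot,X)-\mu_X \in \mH_X^0 = \cran(\Sigma_{XX})$ by Proposition~\ref{prop:kernel}, so the extended adjoint $R_{XY}^*$ applies to it. Using the reproducing property, the target identity reduces to the statement that, for every $g\in\mH_Y$,
\begin{equation*}
E\{g(Y)\mid X\} - E\{g(Y)\} = (R_{XY} g)(X) - E\{(R_{XY}g)(X)\} \quad \text{a.s.}
\end{equation*}
This holds because $\langle g, R_{XY}^*\{\ka_X(\cdot,X)-\mu_X\}\rangle_{\mH_Y} = \langle R_{XY}g, \ka_X(\cdot,X)-\mu_X\rangle_{\mH_X}$. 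Since $\mH_Y$ is separable (continuous kernel), it suffices to verify this for a countable dense set of $g$, which then gives equality of the $\mH_Y$-valued random elements a.s.

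Next, fix $g$ and set $h = R_{XY} g \in \mH_X$, which is well defined by Assumptions~\ref{ass:range} and \ref{ass:rbd}. By construction $\Sigma_{XX} h = \Sigma_{XY} g$. Pairing with any $f\in\mH_X$ gives $\cov\{f(X), h(X)\} = \cov\{f(X), g(Y)\}$. Conditioning on $X$, the right side equals $\cov\{f(X), E(g(Y)\mid X)\}$. Hence the centered function $\phi(X) = E\{g(Y)\mid X\} - E g(Y) - h(X) + E h(X)$ is in $L_2(P_X)$ and satisfies $\cov\{f(X),\phi(X)\} = 0$ for all $f \in \mH_X$.

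Finally I use Assumption~\ref{ass:dense}. Choose $f_n\in\mH_X$ with $\var\{f_n(X)-\phi(X)\}\to 0$. Then $\var\{\phi(X)\} = \cov\{\phi,\phi\} = \lim_n \cov\{f_n(X),\phi(X)\} = 0$, where the limit step uses Cauchy–Schwarz. So $\phi(X)$ is a.s. constant, and since it has mean zero, $\phi(X) = 0$ a.s. This gives the display for each $g$, and the second equality in the lemma is just the definition $R_{XY}^* = \Sigma_{YX}\Sigma_{XX}^\dagger$ on $\ran(\Sigma_{XX})$, extended to $\cran(\Sigma_{XX})$.

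The main obstacle is the measure-theoretic bookkeeping in the first step. One part is justifying that the conditional expectation of the $\mH_Y$-valued variable commutes with $\langle g,\cdot\rangle$. The other is passing from "a.s. for each $g$" to "a.s. simultaneously". Boundedness of $\ka_Y$ and separability handle both. The algebraic core in the last two steps is a routine density argument.
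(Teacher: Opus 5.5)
Your proof is correct and follows the same route as the paper. Both reduce the identity, via the reproducing property and $\langle g, R_{XY}^*\{\ka_X(\cdot,X)-\mu_X\}\rangle_{\mH_Y} = \langle R_{XY}g, \ka_X(\cdot,X)-\mu_X\rangle_{\mH_X}$, to the scalar fact that $(R_{XY}g)(X)$ equals $E\{g(Y)\mid X\}$ up to an additive constant; the paper imports that fact from Proposition~1 of \cite{li2017nonlinear}, whereas you derive it directly from $\Sigma_{XX}R_{XY}g=\Sigma_{XY}g$ and the density in Assumption~\ref{ass:dense}, and you also supply the countable-dense-set argument for a common null set that the paper's final step leaves implicit.
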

This lemma  is similar in spirit to the result in Theorem~1 in \cite{sang2026nonlinear}.
The latter results are developed under the regression setting,   where $Y$ is not assigned a nonlinear kernel.
Moreover, in the above lemma, there is no explicit regression error that is independent of $X$, as was assumed  in  \cite{sang2026nonlinear}.

\subsection{Nonlinear SDR and GSIR} \label{sec:nonlinear-sdr}
\begin{definition}
    A sub-$\sigma$-field $\ca G$ of $\sigma(X)$ is called a sufficient dimension reduction (SDR) $\sigma$-field for $Y$ versus $X$ if 
    \be\label{eq:nonlinear-g}
    Y \indep X | \ca G.
    \ee
     If $\ca G^*$ is a sub-$\sigma$-field such that $Y \indep X | \ca G^*$, and $\ca G^* \subseteq \ca G$ for all sub-$\sigma$-fields $\ca G$ satisfying \eqref{eq:nonlinear-g}, then the sub-$\sigma$-field $\ca G^*$ is called the central dimension reduction $\sigma$-field, or the central $\sigma$-field, denoted by $\ca G_{Y|X}$.
\end{definition}
Obviously, an SDR $\sigma$-field always exists: a trivial case is $\ca G = \sigma (X)$, as   $Y \indep X | \sigma(X)$ always holds. However, this choice of $\ca G$ does not  result  in any dimension reduction. Our  goal is to find the smallest  $\sigma$-field that satisfies (\ref{eq:nonlinear-g}).   As shown in Theorem~1 of \cite{lee2013} (see also Theorem~12.2 of \cite{li2018sufficient}), under the following mild assumption such a  $\sigma$-field uniquely exists. 
\begin{assumption} \label{ass:pxy-dominated}
    The family of probability measures $\{P_{X|Y}(\cdot|y): y \in \Omega_Y\}$ is dominated by a $\sigma$-finite measure. 
\end{assumption}
Under this assumption, the intersection of all $\sigma$-fields satisfying (\ref{eq:nonlinear-g}) is itself a $\sigma$-field satisfying (\ref{eq:nonlinear-g}). This $\sigma$-field is called the central $\sigma$-field, and is denoted by $\ca G_{Y|X}$. 
Following the framework of \cite{li2025relative} and \cite{li2018sufficient}, we recast the problem of estimating an abstract central $\sigma$-field, $\ca G_{Y\mid X}$, into the estimation of  a set of functions. Using  the framework of \cite{li2017nonlinear} and \cite{li2018sufficient}, we focus on the class of functions belonging to the RKHS $\mH_X$ by making the following assumption. 

\begin{assumption} \label{ass:nonlinear-sdr}
There exist  functions $f_1,\dots,f_d \in \mH_X$ such that \eqref{eq:nonlinear-sdr} holds. Moreover the $\sigma$-field $\sigma\{f_1(X), \dots, f_d(X)\}$ is minimal. That is, for any $g_1,\dots,g_{d'} \in \mH_X$ such that \eqref{eq:nonlinear-sdr} holds, we have $\sigma\{f_1(X), \dots, f_d(X)\} \subseteq \sigma\{g_1(X), \dots, g_{d'}(X)\}$.
\end{assumption}

Assumption~\ref{ass:nonlinear-sdr} amounts to assuming   there are no redundant functions in $f_1,\dots,f_d$. Thus, $\sigma\{f_1(X), \dots, f_d(X)\}$ is indeed the central $\sigma$-field, which is our target estimand. 
In fact, the central $\sigma$-field can be fully recovered using GSIR  provided that the central $\sigma$-field is complete,  which is defined as follows.
\begin{definition}\label{def:complete}
A sub-$\sigma$-field $\ca G$ of $\sigma(X)$ is complete if, for every $\ca G$-measurable function $f$, 
$E\{f(X)|Y\} = 0$ almost surely $P_Y$ implies that $f(X) = 0$ almost surely $P_X$. 
\end{definition}

A direct application of Theorems 2 and 4 in \cite{li2025relative} gives the following theorem, which provides the theoretical foundation of GSIR and motivates an eigenvalue problem based approach for recovering the central $\sigma$-field. 
\begin{theorem}\label{thm:li25}
    Under Assumptions \ref{ass:dense}--\ref{ass:nonlinear-sdr}, we have $\sigma\{f(X): f \in \overline{\ran}(R_{XY})\} \subseteq \ca G_{Y|X}$. Furthermore, if $\ca{G}_{Y|X}$ is complete, then $\sigma\{f(X): f \in \overline{\ran}(R_{XY})\} = \ca G_{Y|X}$.
\end{theorem}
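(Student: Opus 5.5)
We will prove both inclusions through a characterization of $\overline{\ran}(R_{XY})$ in terms of conditional expectations, which lets us apply the sufficiency and completeness definitions directly. Since $R_{XY}^*$ is the (extended) adjoint, $\overline{\ran}(R_{XY}) = \ker(R_{XY}^*)^{\perp}$. Here $R_{XY}$ maps $\mH_Y$ into $\ker(\Sigma_{XX})^\perp = \mH_X^0$ (Proposition~\ref{prop:kernel}), so it suffices to work inside $\mH_X^0$. By Lemma~\ref{lem:ce formula}, for $g\in \mH_Y$ we have
\[
\langle g, E\{\ka_Y(\cdot,Y)\mid X\}-\mu_Y\rangle_{\mH_Y} = E\{g(Y)\mid X\}-E g(Y),
\]
and hence $(R_{XY} g)(X) - E\{(R_{XY}g)(X)\}$. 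We will compute this through $\langle R_{XY}g, \ka_X(\cdot,X)-\mu_X\rangle_{\mH_X}$, which gives the identity
\[
(R_{XY}g)(X) - E (R_{XY}g)(X) = E\{g(Y)\mid X\} - E g(Y) \quad \text{a.s.}
\]
Thus every element of $\ran(R_{XY})$ is, modulo constants, a version of $E\{g(Y)\mid X\}$. By Proposition~\ref{pro:var-bdd}, convergence in $\mH_X$ implies $L_2(P_X)$ convergence modulo constants. Therefore every $f\in\overline{\ran}(R_{XY})$ is an $L_2$-limit of functions $E\{g_n(Y)\mid X\}$ up to constants.

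For the first inclusion, we use sufficiency. By Assumption~\ref{ass:nonlinear-sdr}, $Y\indep X\mid \ca G_{Y|X}$, so $E\{g(Y)\mid X\} = E\{g(Y)\mid \ca G_{Y|X}\}$, which is $\ca G_{Y|X}$-measurable. $L_2$-limits of $\ca G_{Y|X}$-measurable functions have $\ca G_{Y|X}$-measurable versions. Hence each $f(X)$ with $f\in\overline{\ran}(R_{XY})$ is $\ca G_{Y|X}$-measurable up to null sets, which gives $\sigma\{f(X): f\in\overline{\ran}(R_{XY})\}\subseteq \ca G_{Y|X}$ in the usual a.s. sense. We will also invoke Theorems~2 and~4 of \cite{li2025relative} to handle the measure-theoretic bookkeeping: completing the $\sigma$-fields and identifying $\ca G_{Y|X}$ as generated by $f_1,\dots,f_d\in\mH_X$.

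For the reverse inclusion under completeness, we argue by orthogonality. Let $\ca M$ be the closed subspace of $L_2(P_X)$ (modulo constants) spanned by $\{E[g(Y)\mid X]: g\in \mH_Y\}$. Density of $\mH_Y$ in $L_2(P_Y)$ (Assumption~\ref{ass:dense}) shows that $\ca M$ equals the closure of $\{E[h(Y)\mid X]: h\in L_2(P_Y)\}$. If $\phi\in L_2(\ca G_{Y|X})$ is centered and orthogonal to $\ca M$, then $E\{\phi(X)h(Y)\}=0$ for all $h$, so $E\{\phi(X)\mid Y\}=0$. Completeness then forces $\phi=0$. Hence $\ca M = L_2(\ca G_{Y|X})$ modulo constants, and $\sigma(\ca M)=\ca G_{Y|X}$.

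The main obstacle is passing from $\ca M$, a closure in $L_2$, to $\overline{\ran}(R_{XY})$, a closure in $\mH_X$. Specifically, we must show $\sigma\{f(X): f\in\overline{\ran}(R_{XY})\}$ already generates everything: it contains $\sigma\{E[g(Y)\mid X]: g\in\mH_Y\}$ by the identity above. Then any element of $\ca M$ is an a.s. limit along a subsequence of functions measurable with respect to this $\sigma$-field, hence measurable itself. Taking indicator functions of sets in $\ca G_{Y|X}$, which lie in $\ca M$ after centering, yields $\ca G_{Y|X}\subseteq \sigma\{f(X): f\in\overline{\ran}(R_{XY})\}$. This is precisely where we will lean on Theorem~4 of \cite{li2025relative} if the direct argument needs care about versions and null sets.
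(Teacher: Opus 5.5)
Your proposal is correct, and it takes a genuinely different route from the paper. The paper gives no argument of its own: it obtains the theorem as a direct application of Theorems~2 and~4 of \cite{li2025relative}. You instead prove it from first principles, in two halves.

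The forward inclusion uses three ingredients. The first is the identity $(R_{XY}g)(X)-E\{(R_{XY}g)(X)\}=E\{g(Y)\mid X\}-E\{g(Y)\}$, from Lemma~\ref{lem:ce formula} and the adjoint relation on $\mH_X^0$. The second is the continuous embedding of $\mH_X$ into $L_2(P_X)$ from Proposition~\ref{pro:var-bdd}. The third is $E\{g(Y)\mid X\}=E\{g(Y)\mid \ca G_{Y|X}\}$, which holds by sufficiency.

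The reverse inclusion is a projection argument. The $L_2$-closure $\ca M$ lies in the centered part of $L_2(\ca G_{Y|X})$ by the first half. Its orthogonal complement there is trivial by density of $\mH_Y$ and completeness. Read modulo $P$-null sets, which is the only possible sense since conditional expectations are defined a.s., both halves are sound.

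Your route buys transparency about where each hypothesis enters:
\begin{itemize}
\item density of $\mH_X$ enters only through Lemma~\ref{lem:ce formula};
\item density of $\mH_Y$ enters only in the equality part;
\item Assumption~\ref{ass:nonlinear-sdr} is not needed beyond the sufficiency of $\ca G_{Y|X}$, which Assumption~\ref{ass:pxy-dominated} already provides.
\end{itemize}

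It also shows that the ``main obstacle'' you flag is not one. Your subsequence argument proves that $\sigma\{f(X): f\in\ran(R_{XY})\}$, with no closure at all, already contains $\ca G_{Y|X}$ up to null sets under completeness. So the $\mH_X$-closure matters only in the forward inclusion, where the embedding handles it, and no fallback on \cite{li2025relative} is needed. The paper's citation buys brevity and ties the result to the general framework of that reference.

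Two cosmetic points. The sentence ending with ``and hence $(R_{XY}g)(X)-E\{(R_{XY}g)(X)\}$.'' is unfinished. The identity $\overline{\ran}(R_{XY})=\ker(R_{XY}^*)^{\perp}$ is never used.
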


Since $\overline{\ran}(R_{XY}) = \overline{\ran}(R_{XY}AR_{XY}^*)$
for any invertible operator $A:\overline{\ran}(\Sigma_{YY}) \to \overline{\ran}(\Sigma_{YY})$, at the population level, we can plug in any invertible operator $A$ and use $\overline{\ran}(R_{XY}AR_{XY}^*)$ to recover $\ca{G}_{Y|X}$. A convenient choice of $A$ is the identity operator, in which case we use $\overline{\ran}(R_{XY}R_{XY}^*)$ to recover the central $\sigma$-field $\ca G_{Y|X}$. 
The following assumption ensures that we can make meaningful dimension reduction using Theorem~\ref{thm:li25}. 
\begin{assumption}\label{ass:rank}
    The regression operator  $R_{XY}$ defined by \eqref{eq:reg-op} has rank $d$. 
\end{assumption}

Under this assumption $\cran (R_{XY}) = \ran (R_{XY})$ and, under the assumptions in Theorem~\ref{thm:li25}, this range determines the central $\sigma$-field. Any estimation procedure that targets $\ran (R_{XY})$ is called Generalized Sliced Inverse Regression (GSIR). It turns out that $\ran (R_{XY})$ can be recovered through two different eigenvalue problems. The first, as implemented in \cite{li2017nonlinear} in the functional SDR setting, proceeds as follows. 
Let
\bse
M = \Sigma_{XX}^\dagger \Sigma_{XY} \Sigma_{YX} \Sigma_{XX}^\dagger = R_{XY} R_{XY}^*.
\ese

Note that Proposition~\ref{prop:kernel} indicates that the domain of $R_{XY}^*$ lies within $\mH_X^0$, which will be used as the feasible region upon construction of the eigenvalue problem.  Note that restricting the region within $\mH_X^0$ instead of $\mH_X$ leads to no loss of generality, because the proof of Proposition~\ref{prop:kernel} indicates that $\mH_X^0$ differs from $\mH_X$ only through an additive constant function.
However, adding a constant to $f_1,\dots,f_d$ makes no difference to the nonlinear SDR problem \eqref{eq:nonlinear-sdr}.
Based on the above discussions on the feasible region, we now introduce the eigenvalue problem to recover $\ran(R_{XY})$ as the following corollary.

\def\spn{\mathrm{span}}

\begin{corollary}\label{cor:eigenvalue2} Suppose Assumptions \ref{ass:dense}--\ref{ass:rank} hold. Let $\phi_1, \ldots, \phi_d$ be solution to the following sequential maximization problem: for each $k = 1, \ldots, d$,
\be\label{eq:eigen-m}
\max_\phi && \langle \phi, M \phi \rangle_{\mH_X}, \\
\mathrm{s.t.} && \phi \in \ca H_X^0, \quad \langle \phi, \phi \rangle_{\mH_X}=1, \quad \langle \phi, \phi_j \rangle_{\mH_X} = 0, \, j=1,\dots,k-1.\n
\ee
Then $\sigma\{\phi_1(X), \ldots, \phi_d(X)\} \subseteq \ca G _{Y|X}$. Furthermore, if $\ca G_{Y|X}$ is complete, then these functions generate the central $\sigma$-field; that is,   $ \sigma\{\phi_1(X), \ldots, \phi_d(X)\} = \ca G_{Y|X}$.
\end{corollary}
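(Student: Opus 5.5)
The plan is to reduce Corollary~\ref{cor:eigenvalue2} to Theorem~\ref{thm:li25} by showing that $\span\{\phi_1,\ldots,\phi_d\} = \ran(R_{XY})$. Once this is in hand, $\sigma\{\phi_1(X),\ldots,\phi_d(X)\} = \sigma\{f(X): f\in \ran(R_{XY})\}$. This holds because each $f\in\ran(R_{XY})$ is a finite linear combination of the $\phi_k$, and conversely each $\phi_k$ lies in $\ran(R_{XY})$. Both conclusions then follow from Theorem~\ref{thm:li25}, using Assumption~\ref{ass:rank}, which gives $\cran(R_{XY})=\ran(R_{XY})$.

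The first step is to set up $M$ as a legitimate self-adjoint operator on $\mH_X^0$. By Assumption~\ref{ass:rbd}, $R_{XY}$ is bounded, and its extended adjoint $R_{XY}^*$ is bounded on $\cran(\Sigma_{XX})=\mH_X^0$ by Proposition~\ref{prop:kernel}. Hence $M=R_{XY}R_{XY}^*$ is bounded, self-adjoint and positive semidefinite on $\mH_X^0$. I also need $\ran(R_{XY})\subseteq \mH_X^0$. This holds because $\ran(\Sigma_{XX}^\dagger)\subseteq \ker(\Sigma_{XX})^\perp = \mH_X^0$, again by Proposition~\ref{prop:kernel}. Consequently $\ran(M)\subseteq\ran(R_{XY})$, so $M$ has rank at most $d$. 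Moreover, $\ker(M)=\ker(R_{XY}^*)$, since $\langle \phi,M\phi\rangle=\|R_{XY}^*\phi\|^2$. Therefore $\cran(M)=\ker(R_{XY}^*)^\perp\cap\mH_X^0=\cran(R_{XY})=\ran(R_{XY})$, which is $d$-dimensional. So $M$ restricted to $\mH_X^0$ is a finite-rank, hence compact, self-adjoint operator with exactly $d$ positive eigenvalues, and its range is $\ran(R_{XY})$.

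The second step applies the spectral theorem and the Courant--Fischer characterization to $M$ on the Hilbert space $\mH_X^0$. The sequential maximization in \eqref{eq:eigen-m} then yields orthonormal eigenfunctions $\phi_1,\ldots,\phi_d$ associated with the $d$ positive eigenvalues $\lambda_1\ge\cdots\ge\lambda_d>0$. Each $\phi_k=\lambda_k^{-1}M\phi_k$ lies in $\ran(M)=\ran(R_{XY})$. Since the $\phi_k$ are orthonormal and there are $d$ of them, they span $\ran(R_{XY})$.

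I expect the main obstacle to be the bookkeeping around domains. Specifically, I must make sure the maxima are attained with strictly positive values for all $k\le d$, so that no $\phi_k$ is picked from $\ker(M)$. I must also verify that the range identification $\cran(R_{XY})=\ker(R_{XY}^*)^\perp$ is taken within $\mH_X^0$ rather than $\mH_X$. Both points follow from the finite rank of $M$ together with the identity $\ker(M)=\ker(R_{XY}^*)$ established above.
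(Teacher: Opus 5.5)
Your proposal is correct and is essentially the paper's argument. Both reduce to Theorem~\ref{thm:li25} by showing that $\mathrm{span}\{\phi_1,\ldots,\phi_d\}$ equals the range of $M$, which coincides with $\ran(R_{XY})$ because $M$ has rank $d$. The only difference is that you explicitly justify $\ran(M)=\ran(R_{XY})$ via $\ker(M)=\ker(R_{XY}^*)$ and finish with a dimension count, whereas the paper takes this identification from the preceding discussion and checks the two inclusions directly from $M=\sum_{j=1}^d\lambda_j(\phi_j\otimes\phi_j)$.
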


Alternatively, we can recover $\ran (R_{XY})$ by solving a slightly different eigenvalue problem; this version was implemented in \cite{li2018sufficient}. Let $\Sigma_{XX}^{\dagger 1/2}$ denote the Moore-Penrose inverse of the operator $\Sigma_{XX}^{1/2}$; that is, 
$\Sigma_{XX}^{\dagger 1/2} = (\Sigma_{XX}^{1/2})^{\dagger}$.  Define
\be\label{eq:reg-op-2}
R_{XY}' = \Sigma_{XX}^{\dagger 1/2} \Sigma_{XY},
\ee
and $
R_{XY}'^{*} = \Sigma_{YX}\Sigma_{XX}^{\dagger 1/2}
$
as the adjoint operator of $R_{XY}'$. Let
\bse
    M' = \Sigma_{XX}^{\dagger 1/2} \Sigma_{XY} \Sigma_{YX} \Sigma_{XX}^{\dagger 1/2} = R_{XY}' R_{XY}'^{*}. 
\ese

The next corollary, parallel to Corollary \ref{cor:eigenvalue2}, describes the relation between $\ran ( R_{XY}')$ and the eigenfunctions of $M'$. Before stating the corollary, we provide a proposition in parallel to Proposition~\ref{prop:kernel}, which justifies the usage of $\mH_X^0$ as the feasible region, as well as an additional assumption in parallel to Assumption~\ref{ass:rank}.

\begin{proposition}\label{prop:kernel2}
    Under Assumption~\ref{ass:k bdd}, $\ker(\Sigma_{XX}^{1/2}) = (\mH_X^0)^\perp$ and $\overline{\ran}(\Sigma_{XX}^{1/2}) = \mH_X^0$.
\end{proposition}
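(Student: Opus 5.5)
The plan is to reduce the statement to Proposition~\ref{prop:kernel} through two standard facts about a bounded, self-adjoint, positive operator $A$ on a Hilbert space and its square root:
\[
\ker(A^{1/2}) = \ker(A), \qquad \overline{\ran}(A^{1/2}) = \overline{\ran}(A).
\]
Under Assumption~\ref{ass:k bdd}, $\Sigma_{XX}$ is bounded, self-adjoint and positive, because $\langle f, \Sigma_{XX} f\rangle_{\mH_X} = \var\{f(X)\} \ge 0$ and $\var\{f(X)\} \le C_1\|f\|_{\mH_X}^2$ by Proposition~\ref{pro:var-bdd}. Hence $\Sigma_{XX}^{1/2}$ is well defined, bounded, self-adjoint and positive, and it satisfies $\Sigma_{XX}^{1/2}\Sigma_{XX}^{1/2} = \Sigma_{XX}$.

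The kernel identity comes first. If $\Sigma_{XX}^{1/2} f = 0$, then $\Sigma_{XX} f = \Sigma_{XX}^{1/2}(\Sigma_{XX}^{1/2} f) = 0$, so $\ker(\Sigma_{XX}^{1/2}) \subseteq \ker(\Sigma_{XX})$. Conversely, if $\Sigma_{XX} f = 0$, then
\[
0 = \langle f, \Sigma_{XX} f\rangle_{\mH_X} = \langle \Sigma_{XX}^{1/2} f, \Sigma_{XX}^{1/2} f\rangle_{\mH_X} = \|\Sigma_{XX}^{1/2} f\|_{\mH_X}^2,
\]
which forces $\Sigma_{XX}^{1/2} f = 0$. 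Thus $\ker(\Sigma_{XX}^{1/2}) = \ker(\Sigma_{XX})$, and Proposition~\ref{prop:kernel} identifies this with $(\mH_X^0)^\perp$.

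For the range, any bounded self-adjoint operator $B$ satisfies $\overline{\ran}(B) = \ker(B^*)^\perp = \ker(B)^\perp$. Applying this to $B = \Sigma_{XX}^{1/2}$ and using the kernel identity gives
\[
\overline{\ran}(\Sigma_{XX}^{1/2}) = \ker(\Sigma_{XX}^{1/2})^\perp = \big((\mH_X^0)^\perp\big)^\perp.
\]
Since $\mH_X^0$ is a closed linear subspace by definition, $\big((\mH_X^0)^\perp\big)^\perp = \mH_X^0$, which is the claimed range identity.

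There is no real obstacle here; the argument only requires confirming that $\Sigma_{XX}$ is positive and bounded, so that its square root exists, and then invoking Proposition~\ref{prop:kernel}. If the paper's proof of Proposition~\ref{prop:kernel} relies on directly characterizing $\ker(\Sigma_{XX})$ as the set of $f$ with $\var\{f(X)\} = 0$, the same characterization applies verbatim to $\Sigma_{XX}^{1/2}$, because $\|\Sigma_{XX}^{1/2} f\|_{\mH_X}^2 = \var\{f(X)\}$.
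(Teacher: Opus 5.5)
Your proof is correct and follows essentially the same route as the paper. Both reduce to Proposition~\ref{prop:kernel} by showing $\ker(\Sigma_{XX}^{1/2}) = \ker(\Sigma_{XX})$, using $\Sigma_{XX} = \Sigma_{XX}^{1/2}\Sigma_{XX}^{1/2}$ for one inclusion and $\|\Sigma_{XX}^{1/2} f\|_{\mH_X}^2 = \langle f, \Sigma_{XX} f\rangle_{\mH_X}$ for the other; you additionally spell out the range step $\overline{\ran}(B) = \ker(B)^\perp$ for self-adjoint $B$, which the paper leaves implicit.
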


\begin{assumption}\label{ass:rank2}
    The operator $R_{XY}'$ defined by \eqref{eq:reg-op-2} has rank $d$. 
\end{assumption}

\begin{corollary}\label{cor:eigenvalue} Suppose Assumptions \ref{ass:dense}--\ref{ass:nonlinear-sdr}, \ref{ass:rank2} hold. 
Let $\psi_1, \ldots, \psi_d$ be solution to the following sequential maximization problem: for each $k = 1, \ldots, d$,
\be\label{eq:eigen-m1}
\max_\psi&& \langle \psi, M' \psi\rangle_{\mH_X}, \\
\mathrm{s.t.} && \psi\in \ca H_X^0, \quad \langle \psi, \psi \rangle_{\mH_X}=1, \quad \langle \psi, \psi_j \rangle_{\mH_X} = 0, \, j=1,\dots,k-1. \n
\ee
Then $\sigma\{ \Sigma_{XX}^{\dagger 1/2} \psi_1(X), \ldots, \Sigma_{XX}^{\dagger 1/2}  \psi_d(X)\} \subseteq \ca G _{Y|X}$. Furthermore, if $\ca G_{Y|X}$ is complete, then these functions generate the central $\sigma$-field; that is,   $ \sigma\{ \Sigma_{XX}^{\dagger 1/2} \psi_1(X), \ldots,  \Sigma_{XX}^{\dagger 1/2} \psi_d(X)\} = \ca G_{Y|X}$.
\end{corollary}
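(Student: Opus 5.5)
The plan is to show that $\span\{\Sigma_{XX}^{\dagger 1/2}\psi_1,\ldots,\Sigma_{XX}^{\dagger 1/2}\psi_d\} = \ran(R_{XY})$ and then invoke Theorem~\ref{thm:li25}, in parallel with the argument for Corollary~\ref{cor:eigenvalue2}.

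\textbf{Step 1: the $\psi_k$ span $\ran(R'_{XY})$.} First I note that $\Sigma_{XX}^{\dagger 1/2}\Sigma_{XY}$ is well defined. By Assumption~\ref{ass:range}, $\ran(\Sigma_{XY})\subseteq \ran(\Sigma_{XX})\subseteq \ran(\Sigma_{XX}^{1/2})$. Moreover, $\Sigma_{XX}^{\dagger 1/2}\Sigma_{XY} = \Sigma_{XX}^{1/2}\Sigma_{XX}^{\dagger}\Sigma_{XY} = \Sigma_{XX}^{1/2}R_{XY}$, which is bounded by Assumption~\ref{ass:rbd}. Hence $R'_{XY}$ is a bounded operator of rank $d$ (Assumption~\ref{ass:rank2}), and $M'=R'_{XY}R'^{*}_{XY}$ is a finite-rank, self-adjoint, positive operator on $\mH_X$. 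By Proposition~\ref{prop:kernel2}, its range lies in $\overline{\ran}(\Sigma_{XX}^{1/2})=\mH_X^0$. The spectral theorem for compact self-adjoint operators then shows that the sequential maximizers of \eqref{eq:eigen-m1} are the eigenfunctions of $M'$ with the $d$ positive eigenvalues. Restricting to $\mH_X^0$ loses nothing, since $\ker(M')\supseteq(\mH_X^0)^\perp$. These eigenfunctions therefore span $\ran(M')=\ran(R'_{XY}R'^{*}_{XY})=\ran(R'_{XY})$, where the last equality holds because the rank is finite.

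\textbf{Step 2: transport to $\ran(R_{XY})$.} I would show that $\Sigma_{XX}^{\dagger 1/2}$ maps $\ran(R'_{XY})$ bijectively onto $\ran(R_{XY})$. From $R'_{XY}=\Sigma_{XX}^{1/2}R_{XY}$ we get $\ran(R'_{XY}) = \Sigma_{XX}^{1/2}\ran(R_{XY})$. Also $\ran(R_{XY})\subseteq \ker(\Sigma_{XX})^\perp=\mH_X^0=\ker(\Sigma_{XX}^{1/2})^\perp$, using Propositions~\ref{prop:kernel} and~\ref{prop:kernel2}. So $\Sigma_{XX}^{1/2}$ is injective on $\ran(R_{XY})$, and $\Sigma_{XX}^{\dagger 1/2}\Sigma_{XX}^{1/2}$ acts as the identity there. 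Consequently,
\[
\span\{\Sigma_{XX}^{\dagger 1/2}\psi_1,\ldots,\Sigma_{XX}^{\dagger 1/2}\psi_d\}=\Sigma_{XX}^{\dagger 1/2}\ran(R'_{XY})=\ran(R_{XY}).
\]
In particular, both ranges have dimension $d$, which is consistent with Assumption~\ref{ass:rank}.

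\textbf{Step 3: conclude.} Since $\ran(R_{XY})$ is finite dimensional, it is closed, so $\overline{\ran}(R_{XY})=\ran(R_{XY})$. Each $f\in\ran(R_{XY})$ is a linear combination of the $\Sigma_{XX}^{\dagger 1/2}\psi_k$, and conversely each $\Sigma_{XX}^{\dagger 1/2}\psi_k$ lies in $\ran(R_{XY})$. Hence
\[
\sigma\{f(X):f\in\overline{\ran}(R_{XY})\}=\sigma\{\Sigma_{XX}^{\dagger 1/2}\psi_1(X),\ldots,\Sigma_{XX}^{\dagger 1/2}\psi_d(X)\}.
\]
Theorem~\ref{thm:li25} then gives the inclusion in $\ca G_{Y|X}$, and equality with $\ca G_{Y|X}$ under completeness.

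\textbf{Main obstacle.} The delicate point is the domain bookkeeping for the unbounded $\Sigma_{XX}^{\dagger 1/2}$. I need the identity $\Sigma_{XX}^{\dagger 1/2}\Sigma_{XY}=\Sigma_{XX}^{1/2}\Sigma_{XX}^{\dagger}\Sigma_{XY}$, and I need each $\psi_k$ to lie in $\ran(\Sigma_{XX}^{1/2})$ so that $\Sigma_{XX}^{\dagger 1/2}\psi_k$ is defined. For the identity, write $g=\Sigma_{XX}^\dagger\Sigma_{XY}h\in\mH_X^0$. Then $\Sigma_{XY}h=\Sigma_{XX}g=\Sigma_{XX}^{1/2}(\Sigma_{XX}^{1/2}g)$ with $\Sigma_{XX}^{1/2}g\in\mH_X^0$, so $\Sigma_{XX}^{\dagger 1/2}\Sigma_{XY}h=\Sigma_{XX}^{1/2}g$. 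This is the identity. Membership of $\psi_k$ follows from $\psi_k\in\ran(R'_{XY})=\Sigma_{XX}^{1/2}\ran(R_{XY})$.
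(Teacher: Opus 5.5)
Your proof is correct, and its skeleton is the paper's. You identify $\mathrm{span}\{\psi_1,\ldots,\psi_d\}$ with $\mathrm{ran}(M')$ as in Corollary~\ref{cor:eigenvalue2}, transport this space by $\Sigma_{XX}^{\dagger 1/2}$, and apply Theorem~\ref{thm:li25}.

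The difference is in the transport step. The paper writes $M=\Sigma_{XX}^{\dagger 1/2}M'\Sigma_{XX}^{\dagger 1/2}$ and reads off $\mathrm{ran}(M)=\Sigma_{XX}^{\dagger 1/2}\mathrm{ran}(M')$. That sandwiches $M'$ between two copies of an unbounded operator and leaves the domain questions implicit:
\begin{itemize}
\item that $\mathrm{ran}(M')\subseteq\mathrm{ran}(\Sigma_{XX}^{1/2})$, so the outer $\Sigma_{XX}^{\dagger 1/2}$ can be applied;
\item that the inner $\Sigma_{XX}^{\dagger 1/2}$ maps its domain onto $\mathcal{H}_X^0$, so $\mathrm{ran}(M'\Sigma_{XX}^{\dagger 1/2})=\mathrm{ran}(M')$.
\end{itemize}
You instead factor the regression operators themselves, $R'_{XY}=\Sigma_{XX}^{1/2}R_{XY}$, with the bounded factor on the left. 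The transport then reduces to injectivity of $\Sigma_{XX}^{1/2}$ on $\mathcal{H}_X^0$.

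This buys you several facts the paper takes for granted:
\begin{itemize}
\item $R'_{XY}$ is bounded, so $M'$ is a compact self-adjoint operator and the variational characterization of \eqref{eq:eigen-m1} applies.
\item Each $\psi_k$ lies in $\mathrm{ran}(\Sigma_{XX}^{1/2})$, so $\Sigma_{XX}^{\dagger 1/2}\psi_k$ is defined.
\item $\mathrm{ran}(R_{XY})$ is $d$-dimensional under Assumption~\ref{ass:rank2} alone.
\end{itemize}
The last point means your appeal to Theorem~\ref{thm:li25} needs neither Assumption~\ref{ass:rank}, which is not among the corollary's hypotheses, nor a separate identification of $\mathrm{ran}(M)$ with $\overline{\mathrm{ran}}(R_{XY})$. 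The paper's route through $\mathrm{ran}(M)$ relies on that identification tacitly.

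The paper's version is shorter and lands directly on $\mathrm{ran}(M)$, which keeps the GSIR-I/GSIR-II parallel in view. Yours does the domain bookkeeping explicitly and is the more rigorous of the two.
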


At the sample level, we use the empirical analogues of the eigenvalue problems in Corollaries \ref{cor:eigenvalue2} and \ref{cor:eigenvalue} to estimate the $\ran ( R_{XY})$. For easy reference, we refer to the GSIR based on Corollary \ref{cor:eigenvalue2} as GSIR-I, and that based on Corollary \ref{cor:eigenvalue} as GSIR-II.  In the next two sections, we develop rates of GSIR-I and GSIR-II  that are faster than the rate given in \cite{li2017nonlinear}, with Section \ref{sec:rate} devoted to GSIR-I and Section \ref{sec:rate GSIR-II} devoted to GIR-II.

\section{Convergence rate of GSIR-I} \label{sec:rate}

We now come to the main theme of this paper: to improve the convergence rate from \eqref{eq:epsilon n beta wedge} to \eqref{eq:faster_rate} using an additional assumption  on the decay rate of the eigenvalues of the covariance operator of $X$. 
In Section \ref{sec:operator-rate}, we analyze the convergence rate of the regression operator, while in Section \ref{sec:function-rate}, we derive the convergence rate of the eigenfunctions and express it in terms of that of the regression operator. Section \ref{sec:optimal-rate} establishes the optimal convergence rate of GSIR under different smoothness assumptions.

\subsection{Convergence rate of the estimated regression operator}\label{sec:operator-rate}

In this subsection, we first introduce our estimator for the regression operator, and then derive its convergence rate. In the following, we will use $E_n (\cdot)$ to denote the sample average: if $f$ is a function of $X$, then $E_n f(X) = n^{-1}\sum_{i=1}^n f(X_i)$. We will use $\mathbb{N}$ to denote the set of natural numbers $\{1, 2, \ldots \}$.  The estimators of the covariance and cross-covariance operators $\Sigma_{XX}$ and $\Sigma_{XY}$ are given by 
\bse
\wh{\Sigma}_{XX} &=& E_n [ \{ \ka_X(\cdot,X) - \wh{\mu}_X\} \otimes \{\ka_X(\cdot,X) - \wh{\mu}_X\} ], \\
\wh{\Sigma}_{XY} &=& E_n [ \{ \ka_X(\cdot,X) - \wh{\mu}_X\} \otimes \{\ka_Y(\cdot,Y) - \wh{\mu}_Y\} ], 
\ese
where $\wh{\mu}_X = E_n\{\ka_X(\cdot,X)\}$ and $\wh{\mu}_Y = E_n\{\ka_Y(\cdot,Y)\}$ are the empirical estimators of the mean elements. See, for example, Section 12.4 of \cite{li2018sufficient}.
We estimate the regression operator $R_{XY}$ by 
\be\label{eq:rhat}
\wh{R}_{XY} = (\wh{\Sigma}_{XX} + \epsilon_n I)^{-1} \wh{\Sigma}_{XY},
\ee
where $\epsilon_n>0$ is a Tikhonov regularization parameter. The use of Tikhonov regularization for the inverse of $\wh{\Sigma}_{XX}$ is standard {  in} nonlinear sufficient dimension reduction (\cite{lee2013}, \cite{jang2024selective}), as well as kernel ridge regression and RKHS regression (see, for example, \cite{caponnetto2007optimal}, see also Chapter~9 of \cite{steinwart2008support}).

Before turning to the convergence rate of the regression operator, we first restate a lemma concerning the convergence rates of $\wh{\Sigma}_{XX}$ and $\wh{\Sigma}_{XY}$ in terms of the Hilbert-Schmidt norm $\| \cdot \|\hs$. 
See Lemma~5 of \cite{sang2026nonlinear} or Lemma~5 of \cite{fukumizu2007statistical}. 
Since the operator norm $\| \cdot \|\op$ is no greater than the Hilbert-Schmidt norm $ \|\cdot\|\hs$, the same convergence rates also hold for the operator norm.

\begin{lemma} \label{lem:sigmaxx-hat-rate}
    Under Assumption~\ref{ass:k bdd}, $\Sigma_{XX}$ and $\Sigma_{XY}$ are Hilbert-Schmidt operators. The convergence rates of $\wh{\Sigma}_{XX}$ and $\wh{\Sigma}_{XY}$ are
    \bse
        \| \wh{\Sigma}_{XX} - \Sigma_{XX} \|\hs = O_p(n^{-1/2}), \quad 
        \| \wh{\Sigma}_{XY} - \Sigma_{XY} \|\hs = O_p(n^{-1/2}).
    \ese
\end{lemma}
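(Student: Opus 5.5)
The plan is to handle the two claims in turn: first that $\Sigma_{XX}$ and $\Sigma_{XY}$ are Hilbert--Schmidt, then the $O_p(n^{-1/2})$ rates. Throughout, let $K_X = \sup_{x}\ka_X(x,x) < \infty$ and $K_Y = \sup_y \ka_Y(y,y)<\infty$, which are finite by Assumption~\ref{ass:k bdd}. Write $U = \ka_X(\cdot,X)$ and $V = \ka_Y(\cdot,Y)$. Then $\|U\|_{\mH_X}^2 = \ka_X(X,X)\le K_X$ and $\|V\|_{\mH_Y}^2\le K_Y$. Consequently $\|\mu_X\|\le K_X^{1/2}$, and the centered elements satisfy $\|U-\mu_X\|\le 2K_X^{1/2}$ and $\|V-\mu_Y\|\le 2K_Y^{1/2}$.

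\textbf{Hilbert--Schmidt property.} A rank-one operator $a\otimes b$ has Hilbert--Schmidt norm $\|a\|\,\|b\|$. Hence
\[
\|\Sigma_{XY}\|\hs \le E\{\|U-\mu_X\|\,\|V-\mu_Y\|\} \le 4K_X^{1/2}K_Y^{1/2}.
\]
This uses Bochner integrability in the Hilbert space of Hilbert--Schmidt operators $\mH_Y\to\mH_X$, which is itself a Hilbert space isometric to $\mH_X\otimes\mH_Y$. The same argument covers $\Sigma_{XX}$.

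\textbf{Rate.} Decompose the empirical operator:
\[
\wh\Sigma_{XY} = E_n\{(U-\mu_X)\otimes(V-\mu_Y)\} - (\wh\mu_X-\mu_X)\otimes(\wh\mu_Y-\mu_Y).
\]
This is the standard identity from expanding $U-\wh\mu_X = (U-\mu_X)-(\wh\mu_X-\mu_X)$: the cross terms collapse because $E_n(U-\mu_X)=\wh\mu_X-\mu_X$. Then
\[
\wh\Sigma_{XY}-\Sigma_{XY} = \Big[E_n Z - E Z\Big] - (\wh\mu_X-\mu_X)\otimes(\wh\mu_Y-\mu_Y),
\qquad Z=(U-\mu_X)\otimes(V-\mu_Y).
\]
The $Z_i$ are i.i.d.\ random elements of the Hilbert space $\mathcal{HS}(\mH_Y,\mH_X)$, each bounded by $4K_X^{1/2}K_Y^{1/2}$. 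By independence and the Hilbert-space Pythagorean identity,
\[
E\|E_nZ-EZ\|\hs^2 = n^{-1}E\|Z-EZ\|\hs^2 \le n^{-1}\cdot 16K_XK_Y .
\]
Chebyshev's inequality then gives $O_p(n^{-1/2})$ for this term.

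For the remainder term, the same variance computation in $\mH_X$ and $\mH_Y$ gives $\|\wh\mu_X-\mu_X\|=O_p(n^{-1/2})$ and $\|\wh\mu_Y-\mu_Y\|=O_p(n^{-1/2})$. Their tensor product therefore has Hilbert--Schmidt norm $O_p(n^{-1})$, which is negligible. The case of $\wh\Sigma_{XX}$ is identical with $Y$ replaced by $X$. The operator-norm statement follows from $\|\cdot\|\op\le\|\cdot\|\hs$.

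The only point needing care is justifying that expectations commute with the Hilbert--Schmidt inner product, i.e.\ that $E\langle Z_i,Z_j\rangle\hs = \langle EZ_i,EZ_j\rangle\hs$ for $i\neq j$. I would handle this through Bochner integrability. Measurability of $Z$ follows from continuity of the kernels (Assumption~\ref{ass:k bdd}), which makes $x\mapsto\ka_X(\cdot,x)$ continuous into $\mH_X$. Boundedness gives integrability. With these in place, the interchange is the standard property of Bochner integrals against continuous linear functionals.
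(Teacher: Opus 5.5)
Your proof is correct. It is the standard argument behind the references the paper cites in place of a proof (Lemma~5 of \cite{fukumizu2007statistical} and of \cite{sang2026nonlinear}): you split $\wh\Sigma_{XY}-\Sigma_{XY}$ into the centered empirical mean of i.i.d.\ bounded Hilbert--Schmidt-valued elements, controlled by $E\|E_nZ-EZ\|\hs^2=n^{-1}E\|Z-EZ\|\hs^2$ and Chebyshev, plus the product $(\wh\mu_X-\mu_X)\otimes(\wh\mu_Y-\mu_Y)$ of order $O_p(n^{-1})$. The only cosmetic issue is notation: the paper reserves $U$ for the residual $\ka_Y(\cdot,Y)-E\{\ka_Y(\cdot,Y)\mid X\}$ and $V$, $V_n$ for inverse operators, so you should rename your $\ka_X(\cdot,X)$ and $\ka_Y(\cdot,Y)$.
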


Let $\{(\lambda_j,\varphi_j):j=1,2,\dots\}$ be the eigenvalue-eigenfunction sequence of $\Sigma_{XX}$ with $\lambda_1 \ge \lambda_2 \ge \dots$.
That is, we have the eigendecomposition of $\Sigma_{XX}$ as
\bse
\Sigma_{XX} = \sum_{j=1}^\infty \lambda_j (\varphi_j \otimes \varphi_j). 
\ese
Under Assumption~\ref{ass:k bdd}, $\Sigma_{XX}$ is a trace-class operator --- that is,  its eigenvalues are summable. The following assumption, which is often made in the functional data analysis literature, is the key to the sharpening of the convergence rate of GSIR. The existing convergence rate of GSIR, such as given in \cite{li2017nonlinear}, {did not} use this assumption. 
\begin{assumption}\label{ass:alpha}
$\lambda_j \asymp j^{-\alpha}$ for some $\alpha>1$ and for all $j \in \natural$.
\end{assumption}

Next,  following  the construction in \cite{sang2026nonlinear}, we  define the population-level residual 
\be\label{eq:u}
U = \ka_Y(\cdot,Y) - E\{ \ka_Y(\cdot,Y)|X\} \in \ca H_Y.
\ee 
Clearly, $\mu_U = E(U) =0$. Also, let $\Sigma_{XU} = E[\{\ka_X(\cdot,X) - \mu_X\} \otimes U]$.
We define the sample-level counterparts of $\mu_U$ and $\Sigma_{XU}$ as 
\bse
\wh{\mu}_U = E_n(U) = E_n [\ka_Y(\cdot, Y)- E\{\ka_Y(\cdot,Y)|X\}] 
= n^{-1} \sumi \{ \ka_Y(\cdot, Y_i) - E[\ka_Y(\cdot,Y_i )|X_i]\},  \\
\wh{\Sigma}_{XU} = E_n[\{\ka_X(\cdot,X) - \wh{\mu}_X\} \otimes (U - \wh{\mu}_U)] 
= n^{-1} \sumi \{\ka_X(\cdot,X_i) - \wh{\mu}_X\} \otimes (U_i - \wh{\mu}_U). \hspace{.35in}
\ese
We then define an intermediate operator between $\Sigma_{XU}$ and $\wh \Sigma_{XU}$ by replacing $\wh \mu_ X $ and $\wh \mu_U$ above by $\mu_X$ and $\mu_U = 0$: 
\bse
\wt{\Sigma}_{XU} = E_n[\{\ka_X(\cdot,X) - \mu_X\} \otimes U].
\ese
Under these definitions, one can verify that  Lemmas~6 and 7 in \cite{sang2026nonlinear} still hold. We restate them below for completeness. The proofs are omitted as they are similar to those given in \cite{sang2026nonlinear}. 

\begin{lemma}\label{lem:sigma-xu}
    Under Assumptions \ref{ass:dense}--\ref{ass:rbd}, we have
    \begin{enumerate}
        \item $\Sigma_{XU}=0$;
        \item $\wh{\Sigma}_{XY} = \wh{\Sigma}_{XU} + \wh{\Sigma}_{XX}R_{XY}$.
    \end{enumerate}
\end{lemma}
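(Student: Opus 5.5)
For part 1, I would argue by testing against arbitrary elements. Since $\Sigma_{XU}$ is a bounded operator from $\mH_Y$ to $\mH_X$, it suffices to show $\langle f, \Sigma_{XU} g\rangle_{\mH_X} = 0$ for all $f \in \mH_X$ and $g \in \mH_Y$. By the reproducing property,
\[
\langle f, \Sigma_{XU} g\rangle_{\mH_X} = E[\{f(X) - E f(X)\}\langle U, g\rangle_{\mH_Y}],
\]
and $\langle U, g\rangle_{\mH_Y} = g(Y) - E\{g(Y)\mid X\}$. The interchange of conditional expectation and inner product is justified by boundedness of $\ka_Y$ (Assumption~\ref{ass:k bdd}), which makes $\ka_Y(\cdot,Y)$ Bochner integrable. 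Conditioning on $X$, the inner expectation vanishes, so $\Sigma_{XU}=0$.

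For part 2, I would first rewrite $U_i$ using Lemma~\ref{lem:ce formula}:
\[
U_i = \ka_Y(\cdot,Y_i) - \mu_Y - R_{XY}^*\{\ka_X(\cdot,X_i)-\mu_X\}.
\]
Averaging gives
\[
\wh\mu_U = \wh\mu_Y - \mu_Y - R_{XY}^*(\wh\mu_X-\mu_X),
\]
and subtracting yields
\[
U_i - \wh\mu_U = \{\ka_Y(\cdot,Y_i)-\wh\mu_Y\} - R_{XY}^*\{\ka_X(\cdot,X_i)-\wh\mu_X\}.
\]
This step needs $\ka_X(\cdot,X_i)-\mu_X$ and $\wh\mu_X-\mu_X$ to lie in the domain $\mH_X^0$ of the extended $R_{XY}^*$, which holds by the definition of $\mH_X^0$ and Proposition~\ref{prop:kernel}.

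Substituting into $\wh\Sigma_{XU}$ and using the identity $a\otimes (B^* b) = (a\otimes b) B$ for tensor products (with the convention $(a\otimes b)h = \langle b,h\rangle a$), I get
\[
\wh\Sigma_{XU} = \wh\Sigma_{XY} - E_n[\{\ka_X(\cdot,X)-\wh\mu_X\}\otimes\{\ka_X(\cdot,X)-\wh\mu_X\}]\,R_{XY} = \wh\Sigma_{XY} - \wh\Sigma_{XX}R_{XY}.
\]
Rearranging gives the claim. I would check the identity $\langle a\otimes B^*b, h\rangle$ pointwise: $(a\otimes B^*b)h = \langle B^*b, h\rangle a = \langle b, Bh\rangle a$.

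The main obstacle is the adjoint bookkeeping. The extended $R_{XY}^*$ is defined only on $\mH_X^0$, so I must verify that $(a\otimes R^*b) = (a\otimes b)R$ holds with $R_{XY}$ mapping $\mH_Y$ into $\ker(\Sigma_{XX})^\perp = \mH_X^0$. This is consistent because $\ran(R_{XY})\subseteq \ker(\Sigma_{XX})^\perp$ by construction of the Moore–Penrose inverse. By contrast, the conditional-expectation interchange in part 1 is routine.
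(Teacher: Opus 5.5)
Your proof is correct and is essentially the argument the paper intends. The paper omits the proof and defers to Lemma~6 of \cite{sang2026nonlinear}; the verification amounts to your two steps, with the tower property $E\{\langle U,g\rangle_{\mH_Y}\mid X\}=0$ replacing the independence of $U$ and $X$ assumed in that reference to get $\Sigma_{XU}=0$, and substitution of Lemma~\ref{lem:ce formula} into $\wh\Sigma_{XY}$ to get the decomposition.

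Your domain concern is harmless, though the justification you give (that $\ran(R_{XY})\subseteq\ker(\Sigma_{XX})^\perp$) is not the relevant fact. What matters is that $R_{XY}$ is bounded on all of $\mH_Y$, so its genuine adjoint is defined on all of $\mH_X$. That adjoint agrees with $\Sigma_{YX}\Sigma_{XX}^\dagger$ on $\ran(\Sigma_{XX})$, hence by continuity with the BLT extension on $\mH_X^0$. Consequently $a\otimes R_{XY}^*b=(a\otimes b)R_{XY}$ holds for every $b\in\mH_X^0$.
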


\begin{lemma}\label{lem:sigma-xu-rate}
     Under Assumption~\ref{ass:k bdd}, we have $\|\wh{\Sigma}_{XU} - \wt{\Sigma}_{XU}\|\hs = O_p(n^{-1})$.
\end{lemma}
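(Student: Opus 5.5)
The statement to prove is Lemma~\ref{lem:sigma-xu-rate}: $\|\wh{\Sigma}_{XU} - \wt{\Sigma}_{XU}\|\hs = O_p(n^{-1})$. The plan is to expand $\wh{\Sigma}_{XU}$ algebraically so that the difference $\wh{\Sigma}_{XU} - \wt{\Sigma}_{XU}$ becomes a single tensor product of two sample-mean deviations. Each of these deviations is $O_p(n^{-1/2})$, so their product is $O_p(n^{-1})$.

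\textbf{Step 1: algebraic identity.} Write $\ka_X(\cdot,X_i) - \wh{\mu}_X = \{\ka_X(\cdot,X_i) - \mu_X\} - (\wh{\mu}_X - \mu_X)$. Substitute this into the definition of $\wh{\Sigma}_{XU}$ and use bilinearity of $\otimes$. The cross terms combine because $E_n\{\ka_X(\cdot,X) - \mu_X\} = \wh\mu_X - \mu_X$ and $E_n(U) = \wh\mu_U$. This gives
\bse
\wh{\Sigma}_{XU} = \wt{\Sigma}_{XU} - (\wh\mu_X - \mu_X)\otimes \wh\mu_U - (\wh\mu_X-\mu_X)\otimes\wh\mu_U + (\wh\mu_X-\mu_X)\otimes \wh\mu_U = \wt{\Sigma}_{XU} - (\wh\mu_X-\mu_X)\otimes\wh\mu_U .
\ese
Since $\|a\otimes b\|\hs = \|a\|_{\mH_X}\|b\|_{\mH_Y}$, it follows that
\bse
\|\wh{\Sigma}_{XU} - \wt{\Sigma}_{XU}\|\hs = \|\wh\mu_X - \mu_X\|_{\mH_X}\,\|\wh\mu_U\|_{\mH_Y}.
\ese

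\textbf{Step 2: bound each factor.} The vectors $\ka_X(\cdot,X_i) - \mu_X$ are i.i.d., mean zero in $\mH_X$, and have norm bounded by $2\sup_x \ka_X(x,x)^{1/2}$ under Assumption~\ref{ass:k bdd}. Hence
\bse
E\|\wh\mu_X-\mu_X\|_{\mH_X}^2 = n^{-1}E\|\ka_X(\cdot,X)-\mu_X\|_{\mH_X}^2 = O(n^{-1}),
\ese
because the cross terms vanish by independence. Chebyshev's inequality then gives $\|\wh\mu_X-\mu_X\|_{\mH_X} = O_p(n^{-1/2})$.

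The same argument applies to $\wh\mu_U$. The $U_i$ are i.i.d. with $E(U)=0$. Moreover $\|U\|_{\mH_Y} \le \|\ka_Y(\cdot,Y)\|_{\mH_Y} + \|E\{\ka_Y(\cdot,Y)|X\}\|_{\mH_Y} \le 2\sup_y \ka_Y(y,y)^{1/2}$, using Jensen's inequality for the Bochner conditional expectation. Therefore $\|\wh\mu_U\|_{\mH_Y} = O_p(n^{-1/2})$. Multiplying the two factors yields the $O_p(n^{-1})$ bound.

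\textbf{Main obstacle.} The only delicate point is making sure $U$ is a well-defined, square-integrable $\mH_Y$-valued random element with mean zero. This requires the conditional expectation $E\{\ka_Y(\cdot,Y)|X\}$ to be meaningful in $\mH_Y$, for example as a Bochner conditional expectation, which exists because the kernel is bounded. Alternatively, Lemma~\ref{lem:ce formula} identifies it as $\mu_Y + R_{XY}^*\{\ka_X(\cdot,X)-\mu_X\}$, which is bounded in norm by Assumption~\ref{ass:rbd}. Either route gives $E\|U\|_{\mH_Y}^2<\infty$ and $E(U) = E[\ka_Y(\cdot,Y) - E\{\ka_Y(\cdot,Y)|X\}] = 0$. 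The remainder of the proof is routine.
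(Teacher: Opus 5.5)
Your proof is correct and is essentially the argument the paper defers to (Lemma~7 of \cite{sang2026nonlinear}): the difference reduces to the rank-one operator $-(\wh\mu_X-\mu_X)\otimes\wh\mu_U$. Its Hilbert--Schmidt norm is the product of two mean-zero sample averages, each $O_p(n^{-1/2})$ by a second-moment (Chebyshev) bound. The only adaptation needed without independence of $U$ and $X$ is finiteness of $E\|U\|_{\mH_Y}^2$, and the bounded kernel supplies this exactly as you argue and as the paper does in \eqref{eq:u:bd}.
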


We also need the following assumption, which pertains to a type of  smoothness in the relation between $X$ and $Y$. 

\begin{assumption}\label{ass:beta}
    There exists some $\beta>0$ such that $\Sigma_{XY} = \Sigma_{XX}^{1+\beta} S_{XY}$ for some bounded linear operator $S_{XY}:\ca H_Y \to \ca H_X$.
\end{assumption}
As discussed in \cite{li2017nonlinear}, \cite{li2018linear} and \cite{sang2026nonlinear}, Assumption~\ref{ass:beta}  requires $\Sigma_{XX}^{\dagger(1+\beta)} \Sigma_{XY}$ to be bounded for some $\beta>0$, which requires that the singular subspaces associated with the small singular values of $\Sigma_{XY}$ align closely with the eigenspaces of $\Sigma_{XX}$ corresponding to its small eigenvalues; equivalently, the leading singular directions of $\Sigma_{XY}$ lie largely within the eigenspaces associated with the larger eigenvalues of $\Sigma_{XX}$. In other words, the dominant outputs of $\Sigma_{XY}$ lie in the low-frequency region of the spectrum  of the operator $\Sigma_{XX}$, reflecting an intrinsic smoothness in the relationship between \( X \) and \( Y \). Moreover, this tendency becomes more pronounced as \( \beta \) increases. 
The following theorem gives the convergence rate of $\wh{R}_{XY} - R_{XY}$.
\begin{theorem}\label{thm:r-rate}
    Under Assumptions \ref{ass:dense}--\ref{ass:rank}, \ref{ass:alpha}--\ref{ass:beta} and $\epsilon_n \prec 1$, we have 
    \bse
    \|\wh{R}_{XY} - R_{XY}\|\op = O_p(n^{-1/2} \epsilon_n^{(\beta\land 1)-1} + \epsilon_n^{\beta\land 1} + n^{-1} \epsilon_n^{-(3\alpha+1)/(2\alpha)} + n^{-1/2}\epsilon_n^{-(\alpha+1)/(2\alpha)}).
    \ese
\end{theorem}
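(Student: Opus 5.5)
Using Lemma~\ref{lem:sigma-xu}, $\wh{\Sigma}_{XY} = \wh{\Sigma}_{XU} + \wh{\Sigma}_{XX}R_{XY}$, so with $\wh A = \wh{\Sigma}_{XX} + \epsilon_n I$ we have
\begin{align*}
\wh R_{XY} - R_{XY} &= \wh A^{-1}\wh{\Sigma}_{XU} + \{\wh A^{-1}\wh{\Sigma}_{XX} - I\}R_{XY} \\
&= \wh A^{-1}\wh{\Sigma}_{XU} - \epsilon_n \wh A^{-1} R_{XY}.
\end{align*}
We bound the two terms separately. For the bias-type term, set $A = \Sigma_{XX}+\epsilon_n I$ and split
\[
\epsilon_n \wh A^{-1}R_{XY} = \epsilon_n A^{-1}R_{XY} + \epsilon_n \wh A^{-1}(\Sigma_{XX}-\wh{\Sigma}_{XX})A^{-1}R_{XY}.
\]
Assumption~\ref{ass:beta} gives $R_{XY} = \Sigma_{XX}^{\beta}S_{XY}$ on $\mH_X^0$. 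Hence $\|\epsilon_n A^{-1}\Sigma_{XX}^{\beta}\|\op \le \epsilon_n^{\beta\wedge 1}$ by the spectral bound $\sup_\lambda \epsilon\lambda^{\gamma}/(\lambda+\epsilon)\le \epsilon^{\gamma}$ for $\gamma\le 1$, with $\lambda$ bounded above when $\gamma>1$. Similarly $\|A^{-1}\Sigma_{XX}^{\beta}\|\op \preceq \epsilon_n^{(\beta\wedge1)-1}$, and $\|\epsilon_n\wh A^{-1}\|\op\le 1$. Together with Lemma~\ref{lem:sigmaxx-hat-rate}, the second piece is $O_p(n^{-1/2}\epsilon_n^{(\beta\wedge 1)-1})$. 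This yields the first two terms of the rate.

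For the variance term, Lemma~\ref{lem:sigma-xu-rate} gives $\|\wh A^{-1}(\wh{\Sigma}_{XU}-\wt{\Sigma}_{XU})\|\op \le \epsilon_n^{-1}O_p(n^{-1})$. The remaining task is to control $\wh A^{-1}\wt{\Sigma}_{XU}$, and this is where the eigenvalue decay of Assumption~\ref{ass:alpha} enters. I plan to write
\[
\wh A^{-1}\wt{\Sigma}_{XU} = A^{-1}\wt{\Sigma}_{XU} + \wh A^{-1}(\Sigma_{XX}-\wh{\Sigma}_{XX})A^{-1}\wt{\Sigma}_{XU}.
\]
For the first piece, $\wt{\Sigma}_{XU}$ is a mean of i.i.d.\ mean-zero Hilbert--Schmidt operators, by Lemma~\ref{lem:sigma-xu}(1). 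Hence
\[
E\|A^{-1}\wt{\Sigma}_{XU}\|\hs^2 = n^{-1}E\|A^{-1}\{\ka_X(\cdot,X)-\mu_X\}\|_{\mH_X}^2\,\|U\|_{\mH_Y}^2.
\]
Since $\ka_Y$ is bounded, $\|U\|$ is bounded, and $E\|A^{-1}(\ka_X-\mu_X)\|^2=\mathrm{tr}(A^{-2}\Sigma_{XX})=\sum_j \lambda_j/(\lambda_j+\epsilon_n)^2$. Under $\lambda_j\asymp j^{-\alpha}$, splitting at $j\asymp \epsilon_n^{-1/\alpha}$ gives this sum $\asymp \epsilon_n^{-1}\cdot\epsilon_n^{-1/\alpha}$. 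So $\|A^{-1}\wt{\Sigma}_{XU}\|\op=O_p(n^{-1/2}\epsilon_n^{-(\alpha+1)/(2\alpha)})$, which is the fourth term.

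The second piece is the main obstacle. The crude bound is $\epsilon_n^{-1}\cdot n^{-1/2}\cdot n^{-1/2}\epsilon_n^{-(\alpha+1)/(2\alpha)} = n^{-1}\epsilon_n^{-(3\alpha+1)/(2\alpha)}$, which is precisely the third term in the statement. So I would simply use $\|\wh A^{-1}\|\op\le\epsilon_n^{-1}$, Lemma~\ref{lem:sigmaxx-hat-rate}, and the bound just obtained, without needing a finer effective-dimension argument. Finally, the $\epsilon_n^{-1}n^{-1}$ term from $\wh{\Sigma}_{XU}-\wt{\Sigma}_{XU}$ is dominated by $n^{-1}\epsilon_n^{-(3\alpha+1)/(2\alpha)}$ since $\epsilon_n\prec1$. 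Collecting all four bounds gives the claimed rate.

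A technical point to check is the domain issue: $R_{XY}=\Sigma_{XX}^{\beta}S_{XY}$ should hold as an identity between bounded operators. Under Assumption~\ref{ass:beta}, $\Sigma_{XX}^\dagger\Sigma_{XX}^{1+\beta}=\Sigma_{XX}^{\beta}$ on $\mH_X$, because $\ran(\Sigma_{XX}^{\beta})\subseteq\mH_X^0$ by Proposition~\ref{prop:kernel}, so this holds directly.
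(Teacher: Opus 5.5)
Your proposal is correct and is essentially the paper's proof. Your two regularization pieces, $-\epsilon_n A^{-1}R_{XY}$ and $-\epsilon_n(\widehat A^{-1}-A^{-1})R_{XY}$, are exactly the paper's $R_n-R_{XY}$ and $\widehat R_{\mathrm{reg}}-R_n$; the paper bounds these by citing Sang et al.\ rather than by your explicit spectral bounds. Your treatment of $\widehat A^{-1}\widehat\Sigma_{XU}$ follows the paper step for step: the $O_p(n^{-1}\epsilon_n^{-1})$ centering term, the second-moment bound $E\|A^{-1}\widetilde\Sigma_{XU}\|_{\mathrm{HS}}^2\le 4Cn^{-1}\sum_j\lambda_j/(\lambda_j+\epsilon_n)^2$ (using $\Sigma_{XU}=0$ and bounded $\|U\|_{\mathcal H_Y}$), and the crude resolvent-difference bound that produces the third term.
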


While Theorem~\ref{thm:r-rate} resembles Theorem~9 in \cite{sang2026nonlinear}, their derivation requires a model assumption between $Y$ and $X$ and an independence assumption between $U$ and $X$. Neither of these conditions is available for GSIR-I (or GSIR-II). Our result avoids these assumptions to adapt their proof to the present situation.
This added generality comes at the cost of imposing a slightly stronger boundedness
requirement on the kernel, as assumed in Assumption~\ref{ass:k bdd}.

\subsection{Convergence Rate of Eigenfunctions} \label{sec:function-rate}
Corollary \ref{cor:eigenvalue2} shows that the central $\sigma$-field is generated by the first $d$ eigenfunctions of $M$. By \eqref{eq:rhat}, the sample estimator of $M$ is  
\bse
\wh{M} = (\wh{\Sigma}_{XX} + \epsilon_n I)^{-1}\wh{\Sigma}_{XY} \wh{\Sigma}_{YX} (\wh{\Sigma}_{XX} + \epsilon_n I)^{-1}= \wh{R}_{XY} \wh{R}_{XY}^*. 
\ese
At the sample level, GSIR recovers the central $\sigma$-field using the $\sigma$-field generated by the first $d$ eigenfunctions of $\wh{M}$. That is, we  solve the problem \eqref{eq:eigen-m} with $M$ replaced by $\wh{M}$. 

Let $\{(\mu_j,\phi_j):j=1,2,\dots ,d\}$ denote the eigenvalue-eigenfunction sequence of $M$ with $\mu_1 \ge \mu_2 \ge \dots \ge \mu_d$, and let $\{(\wh{\mu}_j,\wh{\phi}_j):j=1,2,\dots  ,d\}$  be those of $\wh{M}$ with $\wh{\mu}_1 \ge \wh{\mu}_2 \ge \dots \ge \wh{\mu}_d$. Classical perturbation theory guarantees that the projection operators onto the eigenspaces of $\wh{M}$ converge to those of $M$ at the same rate as $\|\wh{M} - M\|\op$. See, for example, Theorem~2 in \cite{zwald2005convergence} and Lemma~1 in \cite{koltchinskii2017normal}. Moreover, the corresponding eigenfunctions converge at the same rate when their directions are aligned. The following theorem states that all of these convergence rates are governed by the convergence rate of the regression operator.

\begin{theorem}\label{thm:eigen-rate}
    Suppose that $M = R_{XY}R_{XY}^*$, $\wh{M} = \wh{R}_{XY}\wh{R}_{XY}^*$, and $\|\wh{R}_{XY} - R_{XY}\|\op = O_p(r_n)$. Then, we have $\|\wh{M} - M\|\op = O_p(r_n)$. Further suppose that $R_{XY}$ satisfies Assumption~\ref{ass:rank} and all nonzero eigenvalues of $M$ are distinct. Let $\phi_j$ and $\wh{\phi}_j$ be the $j$th eigenfunctions associated with the $j$th largest eigenvalues of $M$ and $\wh{M}$, respectively, for $j=1,\dots,d$. Let  $P_j$ and $\wh{P}_j$ be the projection operators onto the subspaces spanned by $\phi_j$ and $\wh{\phi}_j$, respectively, for $j=1, \ldots, d$. Then, we have $\|\wh{P}_{j} - P_j\|\op = O_p(r_n)$. Moreover, $\|\wh{\phi}_j - s_j \phi_j\|_{\ca H_X} = O_p(r_n)$, where $s_j = \sgn \langle \wh{\phi}_j, \phi_j \rangle_{\ca H_X}$.
\end{theorem}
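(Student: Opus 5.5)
The proof has three steps. First we bound $\|\wh M - M\|\op$ by the triangle inequality. Then we control the eigen-projections through a resolvent (Riesz projection) argument, and finally we pass from projections to sign-aligned eigenfunctions.

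\textbf{Step 1: the operator $\wh M$.} Write
\[
\wh{M} - M = (\wh{R}_{XY} - R_{XY})\wh{R}_{XY}^* + R_{XY}(\wh{R}_{XY}^* - R_{XY}^*).
\]
Adjoints preserve operator norms, so $\|\wh R_{XY}^* - R_{XY}^*\|\op = \|\wh R_{XY} - R_{XY}\|\op = O_p(r_n)$. By Assumption~\ref{ass:rbd}, $\|R_{XY}\|\op < \infty$, and therefore
\[
\|\wh R_{XY}\|\op \le \|R_{XY}\|\op + O_p(r_n) = O_p(1).
\]
Here we implicitly take $r_n \preceq 1$ (or $r_n\to 0$), which is the case of interest. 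Combining these gives $\|\wh M - M\|\op \le O_p(r_n)\,O_p(1) + \|R_{XY}\|\op\, O_p(r_n) = O_p(r_n)$.

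\textbf{Step 2: the projections $\wh P_j$.} By Assumption~\ref{ass:rank}, $M$ is a self-adjoint, nonnegative operator of rank $d$. Its nonzero eigenvalues satisfy $\mu_1>\cdots>\mu_d>0$, and all remaining spectrum sits at $0$. Let
\[
\delta_j = \min\{\mu_{j-1}-\mu_j,\ \mu_j-\mu_{j+1}\}>0,
\]
with the conventions $\mu_0=\infty$ and $\mu_{d+1}=0$. Let $\Gamma_j$ be the circle centered at $\mu_j$ with radius $\delta_j/2$.

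By Weyl's inequality for compact self-adjoint operators, $|\wh\mu_k-\mu_k|\le\|\wh M-M\|\op$ for every $k$. Hence, on the event $\|\wh M-M\|\op<\delta_j/4$, whose probability tends to one, exactly one eigenvalue of $\wh M$ lies inside $\Gamma_j$. That eigenvalue is $\wh\mu_j$, it is simple, and
\[
\wh P_j = -\frac{1}{2\pi i}\oint_{\Gamma_j} (\wh M - zI)^{-1}\,dz .
\]
The analogous formula holds for $P_j$ with $M$ in place of $\wh M$.

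To compare the two, use the second resolvent identity
\[
(\wh M - z)^{-1} - (M - z)^{-1} = -(\wh M - z)^{-1}(\wh M - M)(M - z)^{-1}.
\]
On $\Gamma_j$ the resolvent bounds are $\|(M-z)^{-1}\|\op\le 2/\delta_j$ and $\|(\wh M-z)^{-1}\|\op\le 4/\delta_j$. Integrating over a contour of length $\pi\delta_j$ yields
\[
\|\wh P_j - P_j\|\op \le \frac{4\,\|\wh M - M\|\op}{\delta_j} = O_p(r_n).
\]
The same bound also follows from Theorem~2 of \cite{zwald2005convergence}.

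\textbf{Step 3: the eigenfunctions.} Put $c=\langle\wh\phi_j,\phi_j\rangle_{\ca H_X}$ and $s_j=\sgn c$. Since both functions are unit vectors,
\[
\|\wh\phi_j - s_j\phi_j\|^2_{\ca H_X} = 2(1-|c|) \le 2(1-c^2).
\]
For rank-one projections, $\|\wh P_j - P_j\|\op^2 = 1 - c^2$. Therefore
\[
\|\wh\phi_j - s_j\phi_j\|_{\ca H_X} \le \sqrt 2\,\|\wh P_j - P_j\|\op = O_p(r_n).
\]

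\textbf{Main obstacle.} The delicate step is the spectral localization in Step 2. We must ensure, with probability tending to one, that the $j$th largest eigenvalue of $\wh M$ is isolated near $\mu_j$. In particular, the infinitely many near-zero eigenvalues of $\wh M$ must not enter $\Gamma_j$. For $j=d$ this is exactly where the gap $\mu_d-0$ is used.

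Weyl's inequality, which is valid for compact self-adjoint operators via the min–max characterization, handles this. $\wh M$ is compact because it is finite rank, and $M$ is compact because it has rank $d$. The eigenfunction statement also requires $\wh\phi_j$ to be defined on the restricted domain $\ca H_X^0$. This is harmless: both $M$ and $\wh M$ can be viewed as operators on $\mH_X$, with eigenfunctions of nonzero eigenvalues lying in their ranges. For $\wh M$ this is because the range of $\wh\Sigma_{XX}$ lies in the span of the centered $\ka_X(\cdot,X_i)$.
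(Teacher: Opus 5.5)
Your proof is correct and follows essentially the same route as the paper: a triangle-inequality bound on $\|\wh M - M\|\op$ (via an equivalent decomposition), the gap bound $\|\wh P_j - P_j\|\op \le 4\|\wh M - M\|\op/\delta_j$, and the passage to sign-aligned eigenfunctions through $2(1-|c|)\le 2(1-c^2)$. The only differences are these: you derive the projection bound yourself from the Riesz resolvent integral and Weyl's inequality, where the paper cites Lemma~1 of \cite{koltchinskii2017normal}; and you use the exact identity $\|\wh P_j - P_j\|\op^2 = 1-c^2$ in place of the paper's rank-two Hilbert--Schmidt comparison.
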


Theorem~\ref{thm:eigen-rate} shows that the convergence rate of the projection operator onto each eigenspace is the same as that of the regression operator. The same holds for the convergence rate of the eigenfunctions, up to sign adjustments. However, such sign adjustments do not affect the validity of the sufficient predictors, as replacing any $\phi_j$ with $-\phi_j$ makes no difference to the relationship \eqref{eq:nonlinear-sdr}.  Combining the results in Theorems \ref{thm:r-rate} and \ref{thm:eigen-rate}, we have the convergence rate of the sufficient predictors as given in the next corollary.
\begin{corollary}\label{cor:rate for eigenfunctions}
Let $\phi_1,\dots,\phi_d$ solve \eqref{eq:eigen-m} and $\wh{\phi}_1,\dots,\wh{\phi}_d$ solve \eqref{eq:eigen-m} with $M$ replaced by $\wh{M}$, and set $s_j = \sgn \langle \wh{\phi}_j, \phi_j \rangle_{\ca H_X}$ for $j=1,\dots,d$. Under the assumptions in Theorems \ref{thm:r-rate} and \ref{thm:eigen-rate}, we have $\|\wh{\phi}_j - s_j \phi_j\|_{\ca H_X} = O_p(r_n)$, where 
\be\label{eq:rxy-rate}
r_n = n^{-1/2} \epsilon_n^{(\beta\land 1)-1} + \epsilon_n^{\beta\land 1} + n^{-1} \epsilon_n^{-(3\alpha+1)/(2\alpha)} + n^{-1/2}\epsilon_n^{-(\alpha+1)/(2\alpha)}. 
\ee
\end{corollary}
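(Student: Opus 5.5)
This corollary is a direct combination of Theorem~\ref{thm:r-rate} and Theorem~\ref{thm:eigen-rate}, so the plan is to verify that the hypotheses of the latter are met with $r_n$ given by \eqref{eq:rxy-rate}. First, under Assumptions \ref{ass:dense}--\ref{ass:rank}, \ref{ass:alpha}--\ref{ass:beta} and $\epsilon_n \prec 1$, Theorem~\ref{thm:r-rate} gives $\|\wh{R}_{XY} - R_{XY}\|\op = O_p(r_n)$ with exactly the $r_n$ in \eqref{eq:rxy-rate}. Second, the population operator $M = R_{XY}R_{XY}^*$ and its estimator $\wh{M} = \wh{R}_{XY}\wh{R}_{XY}^*$ are exactly as in Theorem~\ref{thm:eigen-rate}. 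Assumption~\ref{ass:rank} and the distinctness of the nonzero eigenvalues of $M$ are part of the assumptions of Theorem~\ref{thm:eigen-rate}, which are assumed here. Theorem~\ref{thm:eigen-rate} then yields $\|\wh{\phi}_j - s_j\phi_j\|_{\ca H_X} = O_p(r_n)$ for $j=1,\dots,d$.

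The one point needing care is identification. The functions $\phi_j$ and $\wh{\phi}_j$ defined through the sequential maximization \eqref{eq:eigen-m}, with the constraint $\phi \in \ca H_X^0$, must coincide with the eigenfunctions of the $j$th largest eigenvalues used in Theorem~\ref{thm:eigen-rate}. On the population side, $\ran(M) \subseteq \ran(R_{XY}) \subseteq \cran(\Sigma_{XX}) = \ca H_X^0$ by Proposition~\ref{prop:kernel}. So $M$ vanishes on $(\ca H_X^0)^\perp$ and is self-adjoint on $\ca H_X^0$. The Courant--Fischer characterization then shows that the constrained sequential maximizers are the top-$d$ eigenfunctions of $M$. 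These are unique up to sign because the eigenvalues are distinct.

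On the sample side, $\wh{\Sigma}_{XX}$ and $\wh{\Sigma}_{XY}$ have ranges in the span of the centered empirical kernels $\ka_X(\cdot,X_i)-\wh\mu_X$. If I do not want to argue that this span lies in $\ca H_X^0$, I would instead let $\wh{\phi}_j$ be the top eigenfunctions of the self-adjoint nonnegative operator $\wh{M}$. Sequential maximization of a self-adjoint compact operator produces its leading eigenvectors, and that is the object Theorem~\ref{thm:eigen-rate} treats.

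I expect this identification step to be the only real obstacle, and it is minor. Everything else is a direct substitution of the rate $r_n$ from Theorem~\ref{thm:r-rate} into Theorem~\ref{thm:eigen-rate}, with the sign alignment $s_j$ carried over verbatim.
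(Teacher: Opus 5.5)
Your proposal is correct and is exactly the paper's argument: the corollary follows by substituting the rate $r_n$ from Theorem~\ref{thm:r-rate} into Theorem~\ref{thm:eigen-rate}, which the paper does without further comment. Your identification remark is a harmless addition the paper omits, and your hedge on the sample side is unnecessary: each $\ka_X(\cdot,X_i)-\wh\mu_X$ already lies in $\ca H_X^0$, and the span of these elements is invariant under $(\wh\Sigma_{XX}+\epsilon_n I)^{-1}$, so $\ran(\wh M)\subseteq \ca H_X^0$ and the constrained and unconstrained top eigenfunctions of $\wh M$ coincide.
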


\subsection{Optimal Convergence Rate of GSIR-I} \label{sec:optimal-rate}
Note that the rate in \eqref{eq:rxy-rate} depends on the choice of tuning parameter $\epsilon_n$. In this subsection, we derive the optimal convergence rate of \eqref{eq:rxy-rate} among all possible tuning parameter rates of the form $\epsilon_n \asymp n^{-\delta}$, where $\delta > 0$ is a constant. When $\epsilon_n$ is of the form $\epsilon_n \asymp n^{-\delta}$, the convergence rate \eqref{eq:rxy-rate} becomes 
\be\label{eq:rxy-rate-delta}
 r_n \asymp n^{-1/2 + \delta\{1- (\beta \land 1)\}} + n^{-\delta(\beta \land 1)} + n^{-1 + \delta(3\alpha + 1)/(2\alpha)} + n^{-1/2 + \delta(\alpha+1)/(2\alpha)}. 
\ee
According to  Theorem~\ref{thm:r-rate}, Theorem~\ref{thm:eigen-rate}, and Corollary~\ref{cor:rate for eigenfunctions}, $r_n$ in \eqref{eq:rxy-rate-delta} is the convergence rate of the regression operator, the projection operators onto  the eigenspaces, as well as the eigenfunctions.   Let $\delta_\mathrm{opt}$ be the value of $\delta$ that minimizes \eqref{eq:rxy-rate-delta}, and let $\rho_{\mathrm{opt}}$ be the corresponding optimal rate. 
The following theorem gives the optimal choices of $\delta$ for given $\alpha$ and $\beta$, and establishes the resulting optimal convergence rate  $\rho_\mathrm{opt}$. 

\begin{theorem}\label{thm:optimal-rate}
    Suppose that all the assumptions in Theorem~$\ref{thm:r-rate}$ are satisfied. 
    \begin{itemize}
        \item if $\beta > \frac{\alpha-1}{2\alpha}$, then $\delta_{\mathrm{opt}} = \frac{\alpha}{2\alpha(\beta \land 1) + \alpha + 1}$, $\rho_{\mathrm{opt}} = n^{-\frac{\alpha(\beta \land 1)}{2\alpha(\beta \land 1) + \alpha + 1} }$.
        \item if $\beta \leq \frac{\alpha-1}{2\alpha}$, then $\delta_{\mathrm{opt}} = \frac{1}{2}$, $\rho_{\mathrm{opt}} = n^{-\frac{\beta}{2}}$.
    \end{itemize}
\end{theorem}
The proof of Theorem~\ref{thm:optimal-rate} is essentially the same as that of Theorem~10 of \cite{sang2026nonlinear} and is therefore omitted. As shown in \cite{sang2026nonlinear}, this convergence rate is always faster than the optimal rate reported in \cite{li2017nonlinear}. The reason for this improvement is we impose an additional mild Assumption~\ref{ass:alpha}, which is not made in \cite{li2017nonlinear}. When $\alpha$ is large and $\beta$ is close to 1, this rate approaches $n^{-1/3}$, which is significantly faster than the optimal rate $n^{-1/4}$ reported in \cite{li2017nonlinear}.

\section{Convergence Rate of GSIR-II}\label{sec:rate GSIR-II}

We now derive the convergence rate for  {GSIR-II}, namely for the
sample-level estimator of $\operatorname{ran}(R_{XY})$ constructed by
mimicking the population procedure described in
Corollary~\ref{cor:eigenvalue}. 
At the sample level, we estimate $R_{XY}'$ by
$ \wh R_{XY}' =
(\wh\Sigma_{XX} + \epsilon_n I)^{-1/2}\wh\Sigma_{XY}$, 
where $\epsilon_n > 0$ is a Tikhonov regularization parameter. Let $\wh \psi_1, \ldots, \wh \psi_d$ be the first $d$ eigenfunctions of $\wh M'$. We use 
\begin{eqnarray}\label{eq:eigenfunctions gsir ii}
 (\wh\Sigma_{XX} + \epsilon_n I)^{-1/2 }    \wh \psi_1, \ldots,  (\wh\Sigma_{XX} + \epsilon_n I)^{-1/2 }  \wh \psi_d
    \end{eqnarray}
to estimate $\Sigma_{XX}^{\dagger 1/2} \psi_1, \ldots, \Sigma_{XX}^{\dagger 1/2} \psi_d$, which form a basis of $\ran (R_{XY})$. Our derivation proceeds in three steps:
\begin{enumerate}
    \item establish the convergence rate of $\wh R_{XY}'$;
    \vspace{-.1in}
    \item establish the convergence rate of ${\wh M} '$;
    \vspace{-.1in}
    \item derive the convergence rate of the estimated functions in  
    \eqref{eq:eigenfunctions gsir ii}.
\end{enumerate}

\subsection{Convergence rate of $\wh R_{XY}'$}

Before presenting the main result, we first state a lemma that parallels Lemma~8 of \cite{sang2026nonlinear}.

\begin{lemma}\label{lem:alpha}
    Under Assumption~\ref{ass:alpha}, if $\epsilon_n \prec 1$, then $\sum_{j=1}^\infty \lambda_j (\lambda_j + \epsilon_n)^{-1} = O(\epsilon_n^{-1/\alpha})$.
\end{lemma}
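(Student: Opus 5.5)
The plan is to bound the series by splitting it at the index where $\lambda_j$ crosses $\epsilon_n$. Under Assumption~\ref{ass:alpha} there are constants $0<c\le C<\infty$ with $c j^{-\alpha} \le \lambda_j \le C j^{-\alpha}$ for all $j\in\natural$. The function $t \mapsto t/(t+\epsilon_n)$ is increasing in $t>0$, so $\lambda_j(\lambda_j+\epsilon_n)^{-1} \le C j^{-\alpha}(Cj^{-\alpha}+\epsilon_n)^{-1}$. It therefore suffices to bound $\sum_{j} a_j(\epsilon_n)$ with $a_j(\epsilon) = C j^{-\alpha}/(C j^{-\alpha}+\epsilon)$; only the upper bound $\lambda_j \le Cj^{-\alpha}$ is actually needed.

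Set $J_n = \lceil \epsilon_n^{-1/\alpha}\rceil$. Since $\epsilon_n \prec 1$, we have $J_n \ge 1$ and $J_n \le 2\epsilon_n^{-1/\alpha}$ for $n$ large. The sum then splits into two pieces.

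\begin{itemize}
\item For $j \le J_n$, use $a_j \le 1$, which gives a contribution of at most $J_n \le 2\epsilon_n^{-1/\alpha}$.
\item For $j > J_n$, use $a_j \le C j^{-\alpha}/\epsilon_n$. Because $\alpha>1$, compare with an integral:
\begin{equation*}
\sum_{j>J_n} j^{-\alpha} \le \int_{J_n}^\infty t^{-\alpha}\,dt = \frac{J_n^{1-\alpha}}{\alpha-1} \le \frac{\epsilon_n^{(\alpha-1)/\alpha}}{\alpha-1},
\end{equation*}
where the last step uses $J_n \ge \epsilon_n^{-1/\alpha}$ and $1-\alpha<0$. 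Multiplying by $C/\epsilon_n$ gives a contribution of at most $C(\alpha-1)^{-1}\epsilon_n^{-1/\alpha}$.
\end{itemize}

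Adding the two pieces gives $\sum_{j} \lambda_j(\lambda_j+\epsilon_n)^{-1} \le \{2 + C/(\alpha-1)\}\epsilon_n^{-1/\alpha} = O(\epsilon_n^{-1/\alpha})$.

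There is no real obstacle. The only care needed is choosing the cutoff $J_n$ so that the two pieces balance, and noting that $\alpha>1$ is exactly what makes the tail sum finite and of the right order. If $\Sigma_{XX}$ has finitely many nonzero eigenvalues the bound is trivial, but Assumption~\ref{ass:alpha} rules that case out anyway.
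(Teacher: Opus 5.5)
Your proof is correct and follows essentially the same route as the paper's: split the series at the index $\approx \epsilon_n^{-1/\alpha}$, bound the head by the number of terms, and bound the tail by $\epsilon_n^{-1}\sum_j \lambda_j$ compared with $\int x^{-\alpha}\,dx$. The differences are cosmetic. You use $\lceil\cdot\rceil$ where the paper uses $\lfloor\cdot\rfloor$, and you track explicit constants, which also makes clear that only the upper bound $\lambda_j \le C j^{-\alpha}$ is needed.
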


 We now present the convergence rate of $\wh{R}_{XY}' - R_{XY}'$ in the following theorem.

\begin{theorem}\label{thm:r-rate-2}
Under Assumptions \ref{ass:dense}--\ref{ass:nonlinear-sdr}, \ref{ass:rank2}--\ref{ass:beta} and $\epsilon_n \prec 1$,  we have
\be\label{eq:r-rate-2}
\|\wh{R}_{XY}' - R_{XY}'\|\op = O_p(n^{-1/2}  \epsilon_n^{\wt{\beta} \land 1 - 1} + \epsilon_n^{\wt{\beta}\land 1} + n^{-1}\epsilon_n^{-1-1/(2\alpha)} + n^{-1/2}\epsilon_n^{-1/(2\alpha)}),
\ee
 where $\wt{\beta} = \beta + 1/2$.
\end{theorem}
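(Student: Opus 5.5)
The plan is to mimic the proof of Theorem~\ref{thm:r-rate}, with $(\wh\Sigma_{XX}+\epsilon_n I)^{-1/2}$ in place of $(\wh\Sigma_{XX}+\epsilon_n I)^{-1}$. Write $\wh A=(\wh\Sigma_{XX}+\epsilon_n I)^{-1/2}$ and $A_\epsilon=(\Sigma_{XX}+\epsilon_n I)^{-1/2}$. By Assumption~\ref{ass:beta}, $R_{XY}=\Sigma_{XX}^{\beta}S_{XY}$ and $R'_{XY}=\Sigma_{XX}^{1/2}R_{XY}=\Sigma_{XX}^{1/2+\beta}S_{XY}$. By Lemma~\ref{lem:sigma-xu}, $\wh\Sigma_{XY}=\wh\Sigma_{XU}+\wh\Sigma_{XX}R_{XY}$. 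Hence
\[
\wh R'_{XY}-R'_{XY}
= \wh A(\wh\Sigma_{XU}-\wt\Sigma_{XU}) + A_\epsilon\wt\Sigma_{XU} + (\wh A-A_\epsilon)\wt\Sigma_{XU}
+ \bigl(\wh A\wh\Sigma_{XX}-A_\epsilon\Sigma_{XX}\bigr)R_{XY} + \bigl(A_\epsilon\Sigma_{XX}-\Sigma_{XX}^{1/2}\bigr)R_{XY}.
\]
I will bound the five terms separately.

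\textbf{Bias and first noise terms.} The last (bias) term equals $f_\epsilon(\Sigma_{XX})S_{XY}$ with $f_\epsilon(\lambda)=\lambda^{1/2+\beta}\{1-\lambda^{1/2}(\lambda+\epsilon_n)^{-1/2}\}$. Since $1-\lambda^{1/2}(\lambda+\epsilon_n)^{-1/2}\le \epsilon_n/(\lambda+\epsilon_n)$, an elementary supremum over $\lambda\ge0$ gives $\sup_\lambda f_\epsilon(\lambda)\preceq\epsilon_n^{\wt\beta\land1}$.

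The first term is at most $\epsilon_n^{-1/2}O_p(n^{-1})$ by Lemma~\ref{lem:sigma-xu-rate}, which is dominated by $n^{-1}\epsilon_n^{-1-1/(2\alpha)}$.

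For the second term, the summands of $\wt\Sigma_{XU}$ are i.i.d.\ with mean $\Sigma_{XU}=0$, and $\|U\|_{\ca H_Y}$ is bounded by Assumption~\ref{ass:k bdd}. Therefore
\[
E\|A_\epsilon\wt\Sigma_{XU}\|\hs^2\le Cn^{-1}E\|A_\epsilon\{\ka_X(\cdot,X)-\mu_X\}\|^2_{\ca H_X}=Cn^{-1}\sum_j\lambda_j(\lambda_j+\epsilon_n)^{-1}.
\]
By Lemma~\ref{lem:alpha} this gives $O_p(n^{-1/2}\epsilon_n^{-1/(2\alpha)})$.

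\textbf{Cross term.} This is the second-order piece $(\wh A-A_\epsilon)\wt\Sigma_{XU}=\{(\wh A-A_\epsilon)A_\epsilon^{-1}\}\{A_\epsilon\wt\Sigma_{XU}\}$. I will show $\|(\wh A-A_\epsilon)A_\epsilon^{-1}\|\op=O_p(n^{-1/2}\epsilon_n^{-1})$. To do so, use the integral representation $T^{-1/2}=\pi^{-1}\int_0^\infty t^{-1/2}(T+tI)^{-1}\,dt$ together with the resolvent identity
\[
(\wh\Sigma_{XX}+\epsilon_n+t)^{-1}-(\Sigma_{XX}+\epsilon_n+t)^{-1}=(\wh\Sigma_{XX}+\epsilon_n+t)^{-1}(\Sigma_{XX}-\wh\Sigma_{XX})(\Sigma_{XX}+\epsilon_n+t)^{-1}.
\]
Combined with Lemma~\ref{lem:sigmaxx-hat-rate}, the integrand is bounded by $t^{-1/2}(\epsilon_n+t)^{-1}n^{-1/2}\,(\epsilon_n+t)^{-1/2}\epsilon_n^{1/2}\cdot\epsilon_n^{-1/2}$-type factors, which integrate to $n^{-1/2}\epsilon_n^{-1}$. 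Multiplying by the second-term bound yields $n^{-1}\epsilon_n^{-1-1/(2\alpha)}$.

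\textbf{Variance term.} Write $\wh A\wh\Sigma_{XX}=g(\wh\Sigma_{XX})$ with $g(\lambda)=\lambda(\lambda+\epsilon_n)^{-1/2}=(\lambda+\epsilon_n)^{1/2}-\epsilon_n(\lambda+\epsilon_n)^{-1/2}$. Then $g(\wh\Sigma_{XX})-g(\Sigma_{XX})$ is a difference of square roots minus $\epsilon_n$ times a difference of inverse square roots. Apply the same integral/resolvent argument, keeping the factor $\Sigma_{XX}^{\beta}S_{XY}$ on the right so that it absorbs powers $(\Sigma_{XX}+\epsilon_n+t)^{-1}\Sigma_{XX}^{\beta\land 1/2}$. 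The target bound is $n^{-1/2}\epsilon_n^{\wt\beta\land1-1}$, analogous to the term $n^{-1/2}\epsilon_n^{(\beta\land1)-1}$ in Theorem~\ref{thm:r-rate} with the exponent shifted by $1/2$. If $\beta\ge1/2$ the bound is $n^{-1/2}$, using boundedness of $g'$.

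I expect the main obstacle to be these square-root perturbation bounds, both in the cross term and in the variance term. Unlike the inverse in Theorem~\ref{thm:r-rate}, the map $T\mapsto T^{-1/2}$ has no simple algebraic identity, so the integral representation has to be handled carefully to obtain exactly the powers $\epsilon_n^{-1}$ and $\epsilon_n^{\wt\beta\land1-1}$.
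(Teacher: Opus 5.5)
Your five-term decomposition is exactly the paper's. Your bounds for the bias term, for $\wh A(\wh\Sigma_{XU}-\wt\Sigma_{XU})$, for $A_\epsilon\wt\Sigma_{XU}$, and for the variance term when $\beta<1/2$ also follow the paper. For that last case, your two pieces recombine into the paper's integrand $t^{-1/2}(\epsilon_n+t)(\wh\Sigma_{XX}+(\epsilon_n+t)I)^{-1}(\wh\Sigma_{XX}-\Sigma_{XX})(\Sigma_{XX}+(\epsilon_n+t)I)^{-1}$.

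There is one slip in the bias term: the supremum must be over $0<\lambda\le\lambda_1$, not $\lambda\ge 0$. For $\beta>1/2$ the function $\lambda^{1/2+\beta}\epsilon_n/(\lambda+\epsilon_n)$ is unbounded on $[0,\infty)$.

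Your cross term differs mildly from the paper's. The paper bounds $\|(\Sigma_{XX}+(\epsilon_n+t)I)^{-1}\wt\Sigma_{XU}\|_{\mathrm{OP}}$ by the $t=0$ value $O_p(n^{-1/2}\epsilon_n^{-(\alpha+1)/(2\alpha)})$ and uses $\int_0^\infty t^{-1/2}(\epsilon_n+t)^{-1}dt=\pi\epsilon_n^{-1/2}$. Your factorization is also valid once made precise: $\|(\Sigma_{XX}+(\epsilon_n+t)I)^{-1}(\Sigma_{XX}+\epsilon_n I)^{1/2}\|_{\mathrm{OP}}\le(\epsilon_n+t)^{-1/2}$ and $\int_0^\infty t^{-1/2}(\epsilon_n+t)^{-3/2}dt=2\epsilon_n^{-1}$. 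So $\|(\wh A-A_\epsilon)A_\epsilon^{-1}\|_{\mathrm{OP}}=O_p(n^{-1/2}\epsilon_n^{-1})$ holds, and the product rate $n^{-1}\epsilon_n^{-1-1/(2\alpha)}$ is the same.

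The gap is the variance term $(\wh A\wh\Sigma_{XX}-A_\epsilon\Sigma_{XX})R_{XY}$ when $\beta\ge 1/2$.

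\emph{The $g'$ argument fails.} We have $g'(\lambda)=(\lambda/2+\epsilon_n)(\lambda+\epsilon_n)^{-3/2}$, which is decreasing with $g'(0)=\epsilon_n^{-1/2}\to\infty$, so $g'$ is not bounded uniformly in $n$. Even a uniformly bounded scalar derivative would not yield $\|g(\wh\Sigma_{XX})-g(\Sigma_{XX})\|_{\mathrm{OP}}\preceq\|\wh\Sigma_{XX}-\Sigma_{XX}\|_{\mathrm{OP}}$ for non-commuting operators, since Lipschitz functions need not be operator Lipschitz.

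\emph{The integral route fails as written.} Absorbing $(\Sigma_{XX}+(\epsilon_n+t)I)^{-1}\Sigma_{XX}^{1/2}$ leaves an integrand of order $t^{-1/2}(\epsilon_n+t)^{-1/2}$, which is not integrable as $t\to\infty$.

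\emph{The missing step.} As in the paper, absorb only $\Sigma_{XX}^{\gamma}$ with $0<\gamma<1/2$:
$\|(\Sigma_{XX}+(\epsilon_n+t)I)^{-1}\Sigma_{XX}^{\beta}S_{XY}\|_{\mathrm{OP}}\le(\epsilon_n+t)^{\gamma-1}\|\Sigma_{XX}^{\beta-\gamma}\|_{\mathrm{OP}}\|S_{XY}\|_{\mathrm{OP}}$.
This gives $O_p(n^{-1/2}\epsilon_n^{\gamma-1/2})$. Then choose $\gamma\in(1/2-1/(2\alpha),1/2)$, so this is dominated by the fourth term $n^{-1/2}\epsilon_n^{-1/(2\alpha)}$, which is already part of the stated rate.

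An alternative is to split the $t$-integral at $t=1$ and use $\|(\Sigma_{XX}+uI)^{-1}\Sigma_{XX}^{\beta}\|_{\mathrm{OP}}\le\lambda_1^{\beta}u^{-1}$ for large $t$. This gives $O_p(n^{-1/2})$ for $\beta>1/2$, but only $O_p(n^{-1/2}\log\epsilon_n^{-1})$ at $\beta=1/2$.

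So at $\beta=1/2$ your claimed $O_p(n^{-1/2})$ is not established by any of these arguments. The theorem still holds there because the fourth term absorbs the extra factor, and your proposal needs this absorption step.
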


\subsection{Convergence rate of the sufficient predictors}

We next derive the convergence rate of the GSIR-II  sufficient predictors that estimate a basis of $\ran (R_{XY})$. Applying Theorem~\ref{thm:eigen-rate} to 
$M' = R_{XY}'R_{XY}'^*$ and $\wh{M}' = \wh{R}_{XY}'\wh{R}_{XY}'^*$ and using the rate in Theorem \ref{thm:r-rate-2}, we derive the convergence rate of the eigenfunctions of $\wh M '$ to those of $M'$, as given by  the following corollary.

\begin{corollary}\label{cor:m-rate-2}
Suppose that all assumptions in Theorem~\ref{thm:r-rate-2} are satisfied. Then, we have $\|\wh{M}' - M' \| = O_p(r_n')$, where 
\be\label{eq:rn-2}
r_n' = n^{-1/2} \epsilon_n^{\wt{\beta} \land 1 -1} + \epsilon_n^{\wt{\beta}\land 1} + n^{-1}\epsilon_n^{-1-1/(2\alpha)} + n^{-1/2}\epsilon_n^{-1/(2\alpha)},
\ee
where $\wt{\beta} = \beta + 1/2$.
Furthermore, let $\psi_1,\dots,\psi_d$ be the first $d$  eigenfunctions of the operator $M'$ and let $\wh \psi_1, \ldots, \wh \psi_d$ be the first $d$ eigenfunctions of the operator $\wh M ' $. Let $s_j '  = \sgn \langle \wh{\psi}_j, \psi_j \rangle_{\ca H_X}$ for $j=1,\dots,d$. Further suppose that all nonzero eigenvalues of $M'$ are distinct.  Then, we have $\|\wh{\psi}_j - s_j' \psi_j\|_{\ca H_X} = O_p(r_n')$ for $j = 1, \ldots, d$.
\end{corollary}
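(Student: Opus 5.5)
The plan is to treat Corollary~\ref{cor:m-rate-2} as a direct application of Theorem~\ref{thm:eigen-rate} to the pair $(R_{XY}', \wh R_{XY}')$, with $r_n'$ given by Theorem~\ref{thm:r-rate-2}. Theorem~\ref{thm:eigen-rate} is phrased for generic operators: if $M = RR^*$, $\wh M = \wh R \wh R^*$ and $\|\wh R - R\|\op = O_p(r_n)$, then $\|\wh M - M\|\op = O_p(r_n)$. So the work is to check that its hypotheses hold for the primed objects. Only the algebraic structure $M'=R_{XY}'R_{XY}'^*$, $\wh M'=\wh R_{XY}'\wh R_{XY}'^*$ and the rank condition are used there, so replacing Assumption~\ref{ass:rank} by Assumption~\ref{ass:rank2} should be harmless.

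\textbf{Step 1: the operator bound.} I would start by recording the elementary decomposition
\[
\wh M' - M' = (\wh R_{XY}' - R_{XY}')\wh R_{XY}'^{*} + R_{XY}'(\wh R_{XY}' - R_{XY}')^{*}.
\]
This gives
\[
\|\wh M' - M'\|\op \le \|\wh R_{XY}' - R_{XY}'\|\op\bigl(\|\wh R_{XY}'\|\op + \|R_{XY}'\|\op\bigr).
\]
Next I need $\|R_{XY}'\|\op<\infty$. By Assumption~\ref{ass:beta}, $R_{XY}' = \Sigma_{XX}^{\dagger 1/2}\Sigma_{XX}^{1+\beta}S_{XY} = \Sigma_{XX}^{1/2+\beta}S_{XY}$ on $\mH_X^0$, which is bounded. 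Then $\|\wh R_{XY}'\|\op \le \|R_{XY}'\|\op + o_p(1)$, provided $r_n'\to 0$. This holds under the choice of $\epsilon_n$ for which the corollary is meaningful, and in any case the $O_p(r_n')$ statement is only informative then. Theorem~\ref{thm:r-rate-2} then yields $\|\wh M' - M'\|\op = O_p(r_n')$.

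\textbf{Step 2: the eigenfunctions.} Under Assumption~\ref{ass:rank2}, $M'$ has rank $d$ with distinct nonzero eigenvalues, so the first $d$ eigenvalues are isolated. Let $g>0$ be the minimal gap among $\mu_1'>\cdots>\mu_d'>0$. By Weyl's inequality, $|\wh\mu_j' - \mu_j'| \le \|\wh M' - M'\|\op$, so with probability tending to one the empirical eigenvalues are separated by $g/2$. The perturbation bound of \cite{zwald2005convergence} (as in Theorem~\ref{thm:eigen-rate}) then gives $\|\wh P_j' - P_j'\|\op \le C g^{-1}\|\wh M' - M'\|\op = O_p(r_n')$.

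To pass from projections to eigenfunctions, I would use the identity
\[
\|\wh\psi_j - s_j'\psi_j\|_{\mH_X}^2 = 2\bigl(1-|\langle\wh\psi_j,\psi_j\rangle_{\mH_X}|\bigr) \le 2\bigl(1-\langle\wh\psi_j,\psi_j\rangle_{\mH_X}^2\bigr).
\]
Combined with $\|\wh P_j' - P_j'\|\hs^2 = 2(1-\langle\wh\psi_j,\psi_j\rangle^2_{\mH_X})$ and the fact that the HS and operator norms are comparable for rank-one differences, this gives the rate $O_p(r_n')$.

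\textbf{Main obstacle.} The only genuinely delicate point is the feasible-region issue. The eigenproblem is posed on $\mH_X^0$, while $\wh M'$ acts on $\mH_X$. I would argue that $\ran(\wh\Sigma_{XY})$ lies in the span of the centered $\ka_X(\cdot,X_i)-\wh\mu_X$, which is contained in $\mH_X^0$. Moreover, $(\wh\Sigma_{XX}+\epsilon_nI)^{-1/2}$ preserves $\mH_X^0$, since $\wh\Sigma_{XX}$ maps into that same span and $\mH_X^0$ is invariant. Hence the leading $d$ eigenfunctions of $\wh M'$ automatically lie in $\mH_X^0$, and the perturbation argument can be run entirely in the Hilbert space $\mH_X^0$.
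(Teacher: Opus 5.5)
Your proposal is correct and is essentially the paper's argument. The paper obtains Corollary~\ref{cor:m-rate-2} by applying Theorem~\ref{thm:eigen-rate} to $M'=R_{XY}'R_{XY}'^{*}$ and $\wh M'=\wh R_{XY}'\wh R_{XY}'^{*}$ with the rate from Theorem~\ref{thm:r-rate-2}, and your Steps~1--2 re-run that theorem's proof (operator decomposition, eigengap perturbation bound, projection-to-eigenfunction inequality); your explicit checks that $R_{XY}'=\Sigma_{XX}^{1/2+\beta}S_{XY}$ is bounded, that Assumption~\ref{ass:rank2} replaces Assumption~\ref{ass:rank}, and that the eigenfunctions of $\wh M'$ lie in $\mH_X^0$ fill in details the paper leaves implicit, while your reliance on $r_n'=O(1)$ to control $\|\wh R_{XY}'\|\op$ is the same tacit requirement the paper uses in proving Theorem~\ref{thm:eigen-rate}.
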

Comparing the two rates $r_n$ and $r_n'$ in \eqref{eq:rxy-rate} and \eqref{eq:rn-2}, we observe the following points. 
First, the last two terms of \eqref{eq:rxy-rate} multiplied by $\epsilon_n^{1/2}$  become the last two terms of \eqref{eq:rn-2}. Second, replacing the $\beta$ in the first two terms of \eqref{eq:rxy-rate} by $\wt \beta = \beta + 1/2$ gives the first two terms in  \eqref{eq:rn-2}. In other words, the first two terms in \eqref{eq:rxy-rate} multiplied by $\epsilon_n^{\wt \beta \wedge 1- \beta \wedge 1}$ become  the first two terms in \eqref{eq:rn-2}.
However, note that 
\bse
\wt \beta \wedge 1 - \beta \wedge 1
=
\begin{cases}
    1/2, & 0 < \beta < 1/2, \\
    1 - \beta, & 1/2 \le  \beta < 1, \\
   0, & 1 \le \beta,  
\end{cases} \quad \Rightarrow \quad 
\wt \beta \wedge 1 - \beta \wedge 1
\begin{cases}
    > 0,  & 0 < \beta < 1, \\
   =0, & 1 \le \beta.     
\end{cases}
\ese
Thus, the rate $r_n' \prec r_n$ for $0 < \beta < 1$ and $r_n' \preceq r_n$ for $\beta \ge  1$. 

The improved rate $r_n'$ is due to  different regularization schemes in $\wh M$ and $\wh M'$: the former involves $(\wh{\Sigma}_{XX} + \epsilon_n I)^{-1}$ but the latter involves $(\wh{\Sigma}_{XX} + \epsilon_n I)^{-1/2}$. However, when we transform the eigenfunctions of $\wh M '$ to the sufficient predictors in GSIR-II, we need to multiply them  by an additional factor $(\wh{\Sigma}_{XX} + \epsilon_n I)^{-1/2}$. As a result,  the apparent gain in convergence  rate is canceled out, leading to the same convergence rate for GSIR-I and GSIR-II. This is to be expected, because the two approaches estimate the same subspace $\ran(R_{XY})$ at the population level. At the sample level, the sufficient predictor  estimators ultimately involve the same amount of regularization: GSIR-I applies $(\wh \Sigma_{XX} + \epsilon_n I)^{-1}$ once, while GSIR-II applies  $(\wh \Sigma_{XX} + \epsilon_n I)^{-1/2}$ twice. This equivalence in rates is shown in the next corollary.

\begin{corollary}\label{cor:f-rate-2}
Suppose that all conditions in Theorems \ref{thm:r-rate} and \ref{thm:r-rate-2} and Corollary \ref{cor:m-rate-2} are satisfied. Let $\wh \eta_j = (\wh{\Sigma}_{XX}+\epsilon_nI)^{-1/2} \wh{\psi}_j$ and $\eta_j = \Sigma_{XX}^{\dagger 1/2}\psi_j$, for $j=1,\dots,d$. Then, we have
\bse
\|\wh \eta_j - s_j' \eta_j \|_{\mH_X}= O_p(r_n), \quad j = 1, \ldots, d, 
\ese
where $r_n$ is defined by \eqref{eq:rxy-rate}.
\end{corollary}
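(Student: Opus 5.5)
The plan is to avoid the eigenfunctions $\wh\psi_j$ as the primary objects and to identify $\wh\eta_j$ directly with eigenfunctions of a GSIR-I type operator. Write $\wh A = (\wh\Sigma_{XX}+\epsilon_n I)^{-1/2}$. Since $\wh M' = \wh A\wh\Sigma_{XY}\wh\Sigma_{YX}\wh A$, any eigenpair $\wh M'\wh\psi_j = \wh\mu_j'\wh\psi_j$ gives
\[
\wh A\wh M'\wh\psi_j=\wh A^2\wh\Sigma_{XY}\wh\Sigma_{YX}\,\wh\eta_j=\wh\mu_j'\,\wh\eta_j .
\]
Thus $\wh\eta_j$ is an eigenfunction of $\wh N := \wh R_{XY}\wh\Sigma_{YX}$ with the same eigenvalue. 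Here $\wh R_{XY}$ is exactly the GSIR-I estimator \eqref{eq:rhat}. The analogous population identity $\eta_j = \Sigma_{XX}^{\dagger1/2}\psi_j \in \ran(R_{XY})$ gives $N\eta_j=\mu_j'\eta_j$ with $N:=R_{XY}\Sigma_{YX}$. This holds because $\psi_j\in\ran(R'_{XY})$, and $\Sigma_{XX}^{\dagger 1/2}R'_{XY}=R_{XY}$ on the relevant domain, using Assumption~\ref{ass:range} and Proposition~\ref{prop:kernel2}. So the task reduces to a perturbation statement for the rank-$d$ operator $N$ and its estimate $\wh N$.

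First I bound $\|\wh N - N\|\op$. The decomposition
\[
\wh N - N=(\wh R_{XY}-R_{XY})\wh\Sigma_{YX}+R_{XY}(\wh\Sigma_{YX}-\Sigma_{YX})
\]
together with Theorem~\ref{thm:r-rate}, Lemma~\ref{lem:sigmaxx-hat-rate}, and Assumption~\ref{ass:rbd} gives $O_p(r_n + n^{-1/2}) = O_p(r_n)$. The operator $N$ is not self-adjoint in $\mH_X$, so I cannot invoke Theorem~\ref{thm:eigen-rate} verbatim. Instead I use the Riesz projection
\[
\wh P_j=(2\pi i)^{-1}\oint_{\Gamma_j}(zI-\wh N)^{-1}\,dz,
\]
where $\Gamma_j$ is a small circle around the simple nonzero eigenvalue $\mu_j'$ of $N$; the eigenvalues $\mu_j'$ are distinct by the hypotheses of Corollary~\ref{cor:m-rate-2}. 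The standard resolvent bound $\|(zI-\wh N)^{-1}-(zI-N)^{-1}\|\op=O_p(\|\wh N-N\|\op)$ holds uniformly on $\Gamma_j$, which gives $\|\wh P_j-P_j\|\op=O_p(r_n)$ for the rank-one projections. Applying these projections to $\eta_j$ shows that the eigenline of $\wh N$ containing $\wh\eta_j$ is within $O_p(r_n)$ of the span of $\eta_j$.

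The main obstacle is normalization. The vectors $\wh\eta_j$ are not unit vectors: they inherit their length from $\|\wh\psi_j\|=1$ through the factor $\wh A$. I therefore need to show that $\wh\eta_j = c_n\,\wh P_j\eta_j$ with $c_n = s_j' + O_p(r_n)$, and not merely that the directions agree. To pin down the scale I would use the identity $\wh\psi_j = \wh A^{-1}\wh\eta_j$ and compute
\[
\|\wh A^{-1}\wh\eta_j\|^2=\langle\wh\eta_j,(\wh\Sigma_{XX}+\epsilon_n I)\wh\eta_j\rangle=1,
\]
and compare this with $\langle\eta_j,\Sigma_{XX}\eta_j\rangle=\|\psi_j\|^2=1$. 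The difference is controlled by $\|\wh\Sigma_{XX}-\Sigma_{XX}\|\op+\epsilon_n$ and by the direction error $O_p(r_n)$, both of which are $O_p(r_n)$ because $\epsilon_n \le \epsilon_n^{\beta\wedge1}$. This pins $|c_n|$ to $1+O_p(r_n)$. The sign of $c_n$ is fixed by $s_j'$, since $\langle\wh\psi_j,\psi_j\rangle$ and the sign of $c_n$ agree once the error is small. Combining the direction estimate with this scale estimate yields $\|\wh\eta_j - s_j'\eta_j\|_{\mH_X}=O_p(r_n)$.
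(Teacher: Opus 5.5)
Your argument is correct, but it takes a genuinely different route from the paper.

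\textbf{The paper's route.} The paper stays within the self-adjoint framework. Writing $\mu_j,\wh\mu_j$ for the eigenvalues of $M',\wh M'$, it uses the identities $\eta_j=\mu_j^{-1}R_{XY}R_{XY}'^{*}\psi_j$ and $\wh\eta_j=\wh\mu_j^{-1}\wh R_{XY}\wh R_{XY}'^{*}\wh\psi_j$. It splits $\wh\eta_j-s_j'\eta_j$ into four telescoping terms and bounds them with the rates already established: $\|\wh R_{XY}-R_{XY}\|_{\mathrm{OP}}=O_p(r_n)$, and $O_p(r_n')$ for each of $\|\wh R_{XY}'-R_{XY}'\|_{\mathrm{OP}}$, $\|\wh\psi_j-s_j'\psi_j\|_{\mH_X}$ and $|\wh\mu_j^{-1}-\mu_j^{-1}|$, together with $r_n'\preceq r_n$. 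The sign alignment is inherited directly from Corollary~\ref{cor:m-rate-2}, so no normalization issue arises.

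\textbf{Your route.} You observe that $\wh N=\wh R_{XY}\wh\Sigma_{YX}=\wh A\,\wh M'\wh A^{-1}$ is similar to $\wh M'$ and has $\wh\eta_j$ as an eigenvector. The perturbation is then driven by the GSIR-I estimator alone. This explains structurally why the GSIR-II predictors inherit exactly the GSIR-I rate $r_n$. It also needs only consistency, not the rate $r_n'$, from Theorem~\ref{thm:r-rate-2} and Corollary~\ref{cor:m-rate-2}. The price is heavier machinery: Riesz projections for a non-normal operator, plus scale and sign bookkeeping that the paper's decomposition avoids.

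\textbf{Points to make explicit.} Three steps you leave implicit deserve a line each.
\begin{enumerate}
\item $\mu_j'$ is algebraically simple for $N$. This holds because $N$ maps into the $d$-dimensional space $\ran(R_{XY})$, where it is diagonalized by $\eta_1,\dots,\eta_d$ with distinct eigenvalues. Hence $P_j$ really is rank one.
\item $\wh\mu_j'$ lies inside $\Gamma_j$ with probability tending to one, so that $\wh\eta_j\in\ran(\wh P_j)$. This follows from Weyl's inequality for $\wh M'$ combined with the similarity. The similarity also makes the spectrum of $\wh N$ real, so $c_n$ is real.
\item For the sign, write $\langle\wh\psi_j,\psi_j\rangle_{\mH_X}=c_n\langle(\wh\Sigma_{XX}+\epsilon_nI)^{1/2}\wh P_j\eta_j,\Sigma_{XX}^{1/2}\eta_j\rangle_{\mH_X}$. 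Then use $\|(\wh\Sigma_{XX}+\epsilon_nI)^{1/2}-\Sigma_{XX}^{1/2}\|_{\mathrm{OP}}\le\|\wh\Sigma_{XX}+\epsilon_nI-\Sigma_{XX}\|_{\mathrm{OP}}^{1/2}\to0$ in probability. The second factor therefore tends to $\|\psi_j\|_{\mH_X}^2=1$, and $\sgn(c_n)=s_j'$ with probability tending to one.
\end{enumerate}
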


Corollary \ref{cor:f-rate-2} shows that the convergence rate of the sufficient predictors for GSIR-II is the same as that of GSIR-I under similar sets of assumptions. In other words, they have the same degree of improvement over the convergence rate of GSIR stated in \cite{li2017nonlinear}, thanks to the added condition on the decay rate of the eigenvalues of the covariance operator $\Sigma_{XX}$, as postulated in Assumption~\ref{ass:alpha}.

 \section{Extension to the functional SDR setting}\label{sec:fgsir}

The results in Sections \ref{sec:rate} and \ref{sec:rate GSIR-II} can be directly extended to the functional nonlinear SDR setting \citep{li2017nonlinear}, where $X$ and $Y$ are assumed to take values in   Hilbert spaces $\ca X$ and $\ca Y$. We can adapt the nested Hilbert spaces approach in \cite{li2017nonlinear}. Specifically, we construct a second-level RKHS $\mH_X$ on $\ca X$ by imposing a positive definite kernel $\ka_X$ on $\ca X \times \ca X$, where for $x_1,x_2 \in \ca X$, $\ka_X(x_1,x_2)$ is a function of $\langle x_1, x_1 \rangle_{\ca X}$, $\langle x_2, x_2 \rangle_{\ca X}$ and $\langle x_1, x_2 \rangle_{\ca X}$. The same applies to $\mH_Y$ and $\ka_Y$. Based on these definitions, all assumptions can be similarly imposed, and all results can be applied directly to f-GSIR. Therefore, under similar assumptions, the convergence rate developed in this paper also applies to f-GSIR.

\section{Concluding Discussions}\label{sec:conclusion}

In this paper, we show that the convergence rate of GSIR  and f-GSIR can be sharpened to be arbitrarily close to $n^{-1/3}$, improving upon the best rate $n^{-1/4}$ reported in \cite{li2017nonlinear}. This refinement is obtained by imposing a mild eigenvalue decay rate assumption on the covariance operator $\Sigma_{XX}$, which improves the convergence rate of the regression operator. 

A notable feature of this convergence rate is that it is  entirely independent of the dimension $p$ of $X$, and the dimension $d$ of the sufficient predictor, as long as these dimensions do not depend on the sample size $n$. The rate depends only on the degree of smoothness between $X$ and $Y$ and on the decay rate of the eigenvalues of the covariance operator of $X$. This feature is fundamentally important in alleviating the curse of dimensionality. Specifically, for many nonparametric regression and machine learning methods such as kernel regression and kernel conditional density estimation, the convergence rate deteriorates quickly with the dimension $p$ of the predictor $X$. However, as argued in \cite{li2017nonlinear} and \cite{li2018sufficient},  it is often reasonable to assume   an underlying low-dimensional nonlinear structure in $X$---in fact, often as low as $d=1$ or 2---such that   $Y$ depends on $X$ only through the $d$ sufficient predictors. 
Since the convergence rate of dimension reduction is not affected by the original dimension $p$, if we first perform nonlinear dimension reduction on $X$ through GSIR and then feed the sufficient predictors to the downstream analysis, then the final convergence rate is determined not by the original dimension $p$, but by the reduced dimension $d$.  This is the mechanism by which we avoid the curse of dimensionality through a nonlinear sufficient dimension reduction method such as GSIR.

\def\cip{\stackrel{p}\to}
Since the focus of this paper is on establishing the convergence rate of GSIR, we have omitted some issues secondary to this theme. In particular, we discuss three issues worth mentioning to conclude this paper. The first issue is determination of the dimension of the sufficient predictor. Although this dimension $d$ is treated as given in our analysis, it is unknown in practice and must be estimated. Related methods on this direction can be found in \cite{li2017nonlinear} and \cite{li2018sufficient}. In particular, if we have a consistent order determination method with $\wh d \cip d$, we expect the improved and the original convergence rates to remain the same.  The second issue is the numerical procedures involved in solving the eigenvalue  problems stated in Corollary \ref{cor:eigenvalue2}  and Corollary \ref{cor:eigenvalue}. A standard approach is to use a coordinate representation of the operators involved, see \cite{lee2013}, \cite{li2017nonlinear} and \cite{li2018sufficient} for further details. The third issue is that the convergence rate $\epsilon_n^{\beta \wedge 1} + \epsilon_n^{-1}n^{-1/2}$ for GSIR-II without the eigenvalue decay rate assumption, Assumption~\ref{ass:alpha},  has not been officially recorded in the literature, either in the multivariate setting or in the functional setting. However,  this is very obvious from the proof \cite{li2017nonlinear} and the continuous functional calculus argument we used in Section \ref{sec:rate GSIR-II}. Thus, taken together, the two types of convergence rates for  the two estimators, GSIR-I and GSIR-II, provide a rather complete picture  of the convergence behavior of GSIR under the smoothness condition and/or the eigenvalue decay condition.

\appendix

\section*{Appendix}

\section{Proofs of results in Section \ref{sec:background}}

\begin{proof}[Proof of Proposition~\ref{pro:var-bdd}]
    By the reproducing property, for any $x \in \Omega_X$, we have 
    \bse
    |f(x)| = |\langle f, \ka_X(\cdot, x)\rangle_{\mH_X}|\leq \|f\|_{\mH_X} \|\ka_X(\cdot, x)\|_{\mH_X} = \|f\|_{\mH_X} \sqrt{\ka_X(x,x)}.
    \ese
    Hence, 
    \bse
    \var\{f(X)\} \leq E\{f(X)^2\} \leq \|f\|_{\mH_X}^2 \sup_x \ka_X(x,x).
    \ese
     Taking $C_1 = \sup_x \ka_X(x,x)$  gives the desired result for $\var\{f(X)\}$. The result for $\var\{g(Y)\}$ can be proved similarly. 
\end{proof}

\begin{proof}[Proof of Proposition~\ref{prop:kernel}]
It suffices to show that $\ker(\Sigma_{XX}) = (\mH_X^0)^\perp$. Let $f \in \ker(\Sigma_{XX})$. Then, we have $\Sigma_{XX} f = 0$ and $\var\{f(X)\} = \langle f, \Sigma_{XX} f \rangle_{\mH_X} = 0$. This implies $f(X)$ is constant almost surely $P_X$,  which further implies that $f(x) = E\{f(X)\}$ almost surely. Moreover, we have $\langle f, \ka_X(\cdot,x) - \mu_X \rangle_{\mH_X} = f(x) - E\{f(X)\} = 0$ almost surely, which gives $f \in (\mH_X^0)^\perp$. Thus, $\ker(\Sigma_{XX}) \subseteq (\mH_X^0)^\perp$. Conversely, each step above is reversible, so $\ker(\Sigma_{XX}) \supseteq (\mH_X^0)^\perp$ also holds. Summarizing the two directions gives the desired result.
\end{proof}

 \begin{proof}[Proof of Lemma~\ref{lem:ce formula}]
     By the reproducing property, for any $g \in \mH_Y$, we have
     \bse
     \langle g, E\{\ka(\cdot, Y)\mid X\} - \mu_Y\rangle_{\mH_Y} = E\{g(Y)|X\} - E\{g(Y)\}.
     \ese
     On the other hand, by Proposition~1 of \cite{li2017nonlinear}, we have
     \be\label{eq:rxyg}
     R_{XY} g = E\{g(Y)|X\} - E\{g(Y)\} + E\{R_{XY}g (X)\}.
     \ee
     Thus,
     \bse
     && \langle g, R_{XY}^*\{\ka_X(\cdot, X) - \mu_X\} \rangle_{\mH_Y} \\
     &=& \langle R_{XY} g ,  \ka_X(\cdot, X) - \mu_X \rangle_{\mH_X} \\
     &=& \langle E\{g(Y)|X\} - E\{g(Y)\} + E\{R_{XY}g (X)\}, \ka_X(\cdot, X) - \mu_X \rangle_{\mH_X} \\
     &=& E\{g(Y)|X\} - E\{g(Y)\} \\
     &=& \langle g, E \{ \ka_X (\cdot, X) | Y \} - \mu_X \rangle_{\ca H_Y},  
     \ese
     where the second equality follows from \eqref{eq:rxyg}, and the third equality follows from the reproducing property. 
    Since the inner products on the left- and right-hand side of the above equality  coincide for all $g \in \mH_Y$, we have  $R_{XY}^*\{\ka_X(\cdot, X) - \mu_X\} = E\{\ka_Y(\cdot, Y)|X\} - \mu_Y$.
 \end{proof}

\begin{proof}[Proof of Corollary \ref{cor:eigenvalue2}]
By Theorem \ref{thm:li25}, we only need to show that $\spn\{\phi_1,\dots,\phi_d\} = \ran(M)$. 
Since $M$ has a finite rank of $d$ by Assumption~\ref{ass:rank}, the above process gives the eigenfunctions of $M$ corresponding to its nonzero eigenvalues $\lambda_1 \ge \cdots \lambda_d > 0$. By spectral decomposition, 
 \bse
 M = \sum_{j=1}^d \lambda_j (\phi_j \otimes \phi_j).  
 \ese
If $f \in \ran (M)$, then $f = M g$ for some $g \in \ca H_X^0$. Then, by the above identity we have $f = \sum_{j=1}^d \lambda_j \langle \phi_i, g \rangle_{\ca H_X} \phi_i$, which is a member of $\spn \{ \phi_1, \ldots, \phi_d \}$. On the other hand, if $f \in \spn \{\phi_1, \ldots, \phi_d\}$, then, for some $c_1, \ldots, c_d \in \real$, 
\bse
    f = c_1 \phi _1 + \cdots + c_d \phi_d = M [  (c_1/ \lambda_1)  \phi _1 + \cdots + (c_d/ \lambda_d )   \phi _d  ], 
\ese
which is a member of $\ran(M)$. 
 \end{proof} 

\begin{proof}[Proof of Proposition \ref{prop:kernel2}]
    Based on Proposition \ref{prop:kernel}, we only need to show that $\ker(\Sigma_{XX}) = \ker(\Sigma_{XX}^{1/2})$. If $f \in \ker(\Sigma_{XX}^{1/2})$, then $\Sigma_{XX}f = \Sigma_{XX}^{1/2} \Sigma_{XX}^{1/2} f = 0$, which implies that $f \in \ker(\Sigma_{XX})$. Thus, we have $\ker(\Sigma_{XX}^{1/2}) \subseteq \ker(\Sigma_{XX})$. Conversely, if $f \in \ker(\Sigma_{XX})$, then $\|\Sigma_{XX}^{1/2} f \|_{\mH_X}^2 = \langle f, \Sigma_{XX} f \rangle_{\mH_X}=0$, which further implies that $f \in \ker (\Sigma_{XX}^{1/2})$. Thus, we have $\ker(\Sigma_{XX}) \subseteq \ker(\Sigma_{XX}^{1/2})$. Summarizing the above results gives $\ker(\Sigma_{XX})=\ker(\Sigma_{XX}^{1/2})$.
\end{proof}

\begin{proof}[Proof of Corollary \ref{cor:eigenvalue}]
    By the same argument used in the proof of Corollary \ref{cor:eigenvalue2}, we can show that $\ran (M') = \spn ( \psi_1, \ldots, \psi_d)$. Then 
    \bse
        \ran ( M ) = \ran ( \Sigma_{XX}^{\dagger 1/2} M'  \Sigma_{XX}^{\dagger 1/2} ) 
        = \Sigma_{XX}^{\dagger 1/2} \ran ( M'   ) 
       & = &\Sigma_{XX}^{\dagger 1/2} \spn ( \psi_1, \ldots, \psi_d )  \\
    & = &\spn ( \Sigma_{XX}^{\dagger 1/2} \psi_1, \ldots, \Sigma_{XX}^{\dagger 1/2} \psi_d ), 
    \ese
as desired. 
\end{proof}

\section{Proofs of results in Section \ref{sec:rate}}

\begin{proof}[Proof of Theorem~\ref{thm:r-rate}]
    The proof of Theorem~\ref{thm:r-rate} largely follows the argument in the proof of Theorem~9 in \cite{sang2026nonlinear}. However, their analysis requires that $U$ and $X$ are independent, which is not assumed here. Therefore, all arguments that do not involve $U$ remain valid here, while the terms involving $U$ require a different analysis.
    To simplify notation, we abbreviate $(\wh{\Sigma}_{XX} + \epsilon_n I)^{-1}$, $(\Sigma_{XX} + \epsilon_n I)^{-1}$, and $\Sigma_{XX}^\dagger$ by $\wh{V}$, $V_n$ and $V$, respectively. 
By Lemma~\ref{lem:sigma-xu}, we  can decompose $\wh{R}_{XY} $  into $\wh{R}_{\text{reg}} + \wh{R}_{\text{res}}$, where 
\bse
        \wh{R}_{\text{reg}} = \wh{V} \wh{\Sigma}_{XX} R_{XY}, \quad \wh{R}_{\text{res}} = \wh{V} \wh{\Sigma}_{XU}.
\ese
Let $R_n = V_n \Sigma_{XY}$. Further decompose $\wh{R}_{\text{reg}}$ into $\wh{R}_{\text{reg}} - R_n + R_n$, and we have 
\be \label{eq:rxy-diff}
\wh{R}_{XY} - R_{XY} = \wh{R}_{\text{res}} + (\wh{R}_{\text{reg}}  - R_n)  + (R_n - R_{XY}). 
\ee
Since the second and third terms do not involve $U$, we can follow the  argument in the proof of Theorem~9 of \cite{sang2026nonlinear} to obtain
\be\label{eq:rhat-reg-n}
\|\wh{R}_{\text{reg}} - R_n\|\op = O_p(n^{-1/2}\epsilon_n^{\beta \land 1 - 1}) \quad \text{ and } \quad  \|R_n - R_{XY}\|\op = O(\epsilon^{\beta \land 1}).
\ee
It remains to analyze the term $\wh{R}_{\text{res}}$, which can be further decomposed into
\be\label{eq:rhat-res}
\wh{R}_{\text{res}} =  (\wh{V} \wh{\Sigma}_{XU} - \wh{V} \wt{\Sigma}_{XU}) + (\wh{V} \wt{\Sigma}_{XU} - V_n \wt{\Sigma}_{XU} ) + V_n \wt{\Sigma}_{XU}.
\ee
For the first term in \eqref{eq:rhat-res}, we apply the same argument as in the proof of Theorem~9 of \cite{sang2026nonlinear}, which yields
\be\label{eq:res-1}
\|\wh{V} \wh{\Sigma}_{XU} - \wh{V} \wt{\Sigma}_{XU}\|\op = O_p(n^{-1}\epsilon_n^{-1}).
\ee
The arguments for the convergence rate of the second and third terms in \eqref{eq:rhat-res} are different from \cite{sang2026nonlinear}, as  $U$ and $X$ are not assumed independent here. Since $\wh{V} - V_n = \wh{V}(\Sigma_{XX} - \wh{\Sigma}_{XX}) V_n$, the operator norm of the second term on the right-hand side of \eqref{eq:rhat-res} is bounded by 
\be
\|\wh{V} \wt{\Sigma}_{XU} - V_n \wt{\Sigma}_{XU}\|\op &\leq& \|\wh{V}\|\op\|\Sigma_{XX} - \wh{\Sigma}_{XX}\|\op \|V_n \wt{\Sigma}_{XU}\|\op \n \\
&=& O_p(n^{-1/2} \epsilon_n^{-1}) \|V_n \wt{\Sigma}_{XU}\|\op. \label{eq:vsxu-vnsxu1}
\ee
Therefore, to derive the convergence rates of the second and third terms in \eqref{eq:rhat-res}, it remains to find the convergence rate of $\|V_n \wt{\Sigma}_{XU}\|\op$, which is bounded by $\|V_n \wt{\Sigma}_{XU}\|_\HS$. By construction,
\be\label{eq:vn-sigma-xu-hs2}
\|V_n \wt{\Sigma}_{XU}\|_\HS^2 = \left\|n^{-1}\sumi V_n[\{\ka(\cdot, X_i ) - \mu_X\} \otimes U_i]\right\|_\HS^2.
\ee
Since, by Lemma~\ref{lem:sigma-xu}, $\Sigma_{XU} = 0,$ we have
\be\label{eq:e-vn-sigma-xu-hs2}
E(\|V_n \wt{\Sigma}_{XU}\|_\HS^2) &=& n^{-2}\sumi \sumj E(\langle V_n[\{\ka(\cdot, X_i) - \mu_X\} \otimes U_i], V_n[\{\ka(\cdot, X_j) -\mu_X\} \otimes U_j]\rangle_\HS) \n\\
&=& n^{-2} \sumi E (\|V_n[\{\ka(\cdot, X_i) - \mu_X\} \otimes U_i]\|_\HS^2) \n\\
&=& n^{-1}E (\|V_n[\{\ka(\cdot, X) - \mu_X\} \otimes U]\|_\HS^2) .
\ee
Note that Lemma~4.33 of \cite{steinwart2008support} implies that $\mH_X$ and $\mH_Y$ are separable Hilbert spaces under Assumption~\ref{ass:k bdd}. 

Note that the Karhunen–Lo\'eve expansion of $\ka_X(\cdot,X)$ can be written as
\be\label{eq:kl-expansion}
\ka_X(\cdot, X) = \mu_X + \sum_{j=1}^\infty \zeta_j\varphi_j, \quad \text{where} \quad \zeta_j = \langle  \ka_X(\cdot,X) - \mu_X,  \varphi_j \rangle_{\mH_X},
\ee
 where $\zeta_j$'s are uncorrelated random variables with mean zero and $\var(\zeta_j) = \lambda_j$, for $j=1,2,\dots$. See Theorem~11.4.1 of \cite{kokoszka2017introduction} and Theorem~7.2.7 of \cite{hsing2015theoretical} for details. 

Let $\{\phi_j:j = 1,2,\ldots\}$ be an orthogonal basis of $\mH_Y$. 
The squared Hilbert-Schmidt norm on the right-hand side is 
\be\label{eq:vku-hs2}
&&\|V_n[\{\ka(\cdot, X) - \mu_X\} \otimes U]\|_\HS^2 \n\\
&=& \sum_{j=1}^\infty \langle V_n[\{\ka(\cdot, X) - \mu_X\} \otimes U] \phi_j,  V_n[\{\ka(\cdot, X) - \mu_X\} \otimes U] \phi_j \rangle_{\mH_X} \n\\ 
&=& \sum_{j=1}^\infty \langle V_n \{\ka(\cdot, X) - \mu_X\} \langle U, \phi_j \rangle_{\mH_Y}, V_n \{\ka(\cdot, X) - \mu_X\} \langle U, \phi_j \rangle_{\mH_Y}  \rangle_{\mH_X} \n\\
&=& \langle V_n \{\ka(\cdot, X) - \mu_X\}, V_n \{\ka(\cdot, X) - \mu_X\} \rangle_{\mH_X} \sum_{j=1}^\infty \langle U, \phi_j \rangle_{\mH_Y}^2 \n\\
&=& \|U\|^2_{\mH_Y} \sum_{j=1}^\infty (\lambda_j + \epsilon_n)^{-2}\zeta_j^2,
\ee
where the last equality follows from Parseval’s identity and \eqref{eq:kl-expansion}. 
Under Assumption~\ref{ass:k bdd}, we have
\be\label{eq:u:bd}
\|U\|^2_{\mH_Y} 
&=& \|\ka_{Y}(\cdot, Y) - E\{\ka_Y(\cdot, Y)|X\}\|^2_{\mH_Y} \n\\
&\le& 2\|\ka_{Y}(\cdot, Y)\|^2_{\mH_Y} + 2\|E\{\ka_Y(\cdot, Y)|X\}\|^2_{\mH_Y} \n\\
&\le& 2\|\ka_{Y}(\cdot, Y)\|^2_{\mH_Y} + 2E\{\|\ka_Y(\cdot, Y)\|^2_{\mH_Y}|X\} \n\\
&\le& 2\ka_{Y}(Y, Y) + 2E\{\ka_Y(Y, Y)|X\} \n\\
&\le& 4 C,
\ee
where $C$ is the bound of $\ka_Y$ under Assumption~\ref{ass:k bdd}.
Using this relation we deduce 
\be\label{eq:e-vku-hs2}
E(\|V_n[\{\ka(\cdot, X) - \mu_X\} \otimes U]\|_\HS^2) 
&=& E\left\{\|U\|^2_{\mH_Y} \sum_{j=1}^\infty (\lambda_j + \epsilon_n)^{-2}\zeta_j^2\right\} \n\\
&\le& 4 C E\left\{ \sum_{j=1}^\infty (\lambda_j + \epsilon_n)^{-2}\zeta_j^2\right\} \n\\
&=& 4C \sum_{j=1}^\infty \frac{\lambda_j}{(\lambda_j + \epsilon_n)^2}\n\\
&=& O(\epsilon_n^{-(\alpha + 1)/\alpha}),
\ee
 where the last line follows from Lemma~8 of \cite{sang2026nonlinear}.
Hence, $E\|V_n \wt{\Sigma}_{XU}\|_{HS}^2$ is of the order $O(n^{-1}\epsilon_n^{-(\alpha+1)/\alpha})$. By   Chebyshev's inequality, we have 
\be\label{eq:res-3}
 \|V_n \wt{\Sigma}_{XU}\|_{\OP} \le \|V_n \wt{\Sigma}_{XU}\|_{\HS} = O_p(n^{-1/2}\epsilon_n^{-(\alpha+1)/(2\alpha)}) . 
\ee
Combining this with \eqref{eq:vsxu-vnsxu1}, we have 
\be\label{eq:res-2}
 \|\wh{V} \wt{\Sigma}_{XU} - V_n \wt{\Sigma}_{XU}\|_{\OP} = O_p(n^{-1/2}\epsilon_n^{-1} n^{-1/2}\epsilon_n^{-(\alpha+1)/(2\alpha)}) = O_p(n^{-1}\epsilon_n^{-(3\alpha+1)/(2\alpha)}) .
\ee
Summarizing  \eqref{eq:rhat-res} \eqref{eq:res-1}, \eqref{eq:res-3} and \eqref{eq:res-2}, we have
\be\label{eq:rres}
\|\wh{R}_{\text{res}}\|\op &=& O_p(n^{-1}\epsilon_n^{-1} + n^{-1}\epsilon_n^{-(3\alpha+1)/(2\alpha)} + n^{-1/2}\epsilon_n^{-(\alpha+1)/(2\alpha)}) \n\\
&=& O_p(n^{-1}\epsilon_n^{-(3\alpha+1)/(2\alpha)} + n^{-1/2}\epsilon_n^{-(\alpha+1)/(2\alpha)}).
\ee
 Combining  \eqref{eq:rxy-diff}, \eqref{eq:rhat-reg-n} and \eqref{eq:rres}, we obtain the desired result.
\end{proof}

\begin{proof}[Proof of Theorem~\ref{thm:eigen-rate}]
Since 
\bse
&&\wh{R}_{XY} \wh{R}_{XY}^* - R_{XY} R_{XY}^* \\
&=&   (\wh{R}_{XY} - R_{XY}) (\wh{R}_{XY} - R_{XY})^* 
+  R_{XY} (\wh{R}_{XY} - R_{XY})^* 
 +  (\wh{R}_{XY} - R_{XY}) R_{XY} ^*, 
\ese
we have 
\bse
\|\wh{M} - M\|\op 
&=& \|\wh{R}_{XY} \wh{R}_{XY}^* - R_{XY} R_{XY}^*\|\op \\
&\le& \| (\wh{R}_{XY} - R_{XY}) (\wh{R}_{XY} - R_{XY})^*\|\op 
+ \| R_{XY} (\wh{R}_{XY} - R_{XY})^*\|\op \\
&& + \| (\wh{R}_{XY} - R_{XY}) R_{XY} ^*\|\op \\
&\le& \|\wh{R}_{XY} - R_{XY}\|\op^2 + 2 \|R_{XY}\|\op \|\wh{R}_{XY} - R_{XY}\|\op \\
&=& O_p(r_n).
\ese
By Lemma~1 in \cite{koltchinskii2017normal}, 
\be\label{eq:op-bd}
\| \wh{P}_j - P_j \|\op \le 4 \| \wh{M} - M \|\op / \delta_j,
\ee
where $\delta_j = \min (\mu_{j-1}-\mu_j, \mu_j-\mu_{j+1})$ for $j=2,\dots,d$, and $\delta_1 = \mu_1-\mu_2$. See also \cite{koltchinskii2016asymptotics} and \cite{kato1995perturbation} for details. Since $d$ is fixed, $\min \{ \delta_1, \ldots, \delta_d \}$ is a positive constant. Thus  we have proved $\|\wh{P}_{j} - P_j\|\op = O_p(r_n)$.

Next, we prove $\|\wh{\phi}_j - s_j \phi_j\|_{\ca H_X} = O_p(r_n)$. Since   $P_j = \phi_j \otimes \phi_j$ and $\wh{P}_j = \wh{\phi}_j \otimes \wh{\phi}_j$,   $P_j$ and $\wh{P}_j$ are  rank 1 operators. Therefore, the rank of $\wh{P}_j - P_j$ is at most 2. Let $\gamma_1$ and $\gamma_2$ be the first two eigenvalues of $\wh{P}_j - P_j$ with $|\gamma_1| \ge |\gamma_2|$. Then, using \eqref{eq:op-bd}, we have
\be\label{eq:hs-bd}
\| \wh{P}_j - P_j \|\hs = \sqrt{\gamma_1^2 + \gamma_2^2} \le \sqrt{2 \gamma_1^2} = \sqrt{2} |\gamma_1| = \sqrt{2} \| \wh{P}_j - P_j \|\op.  
\ee
On the other hand, we have identities
\bse
\|\wh{\phi}_j - s_j \phi_j\|_{\ca H_X}^2 = \langle \wh{\phi}_j - s_j \phi_j, \wh{\phi}_j - s_j \phi_j \rangle_{\ca H_X} = 2 ( 1 - \langle \wh{\phi}_j, s_j \phi_j \rangle_{\ca H_X}) = 2 ( 1 - | \langle \wh{\phi}_j, \phi_j \rangle_{\ca H_X}| ),
\ese
and
\bse
\| \wh{P}_j - P_j \|\hs^2 = \|\wh{\phi}_j \otimes \wh{\phi}_j - \phi_j \otimes \phi_j \|\hs^2 = 2 ( 1- \langle \wh{\phi}_j \otimes \wh{\phi}_j, \phi_j \otimes \phi_j \rangle\hs ) = 2 ( 1 - \langle \wh{\phi}_j, \phi_j \rangle_{\ca H_X}^2 ).
\ese
Since $\|\wh{\phi}_j\| = \|\phi_j\| =1$, we have $| \langle \wh{\phi}_j, \phi_j \rangle_{\ca H_X}| \le 1$, and consequently,
\be\label{eq:ef-bd}
\|\wh{\phi}_j - s_j \phi_j\|_{\ca H_X}^2 \le \| \wh{P}_j - P_j \|\hs^2.
\ee
Combining \eqref{eq:op-bd}, \eqref{eq:hs-bd} and \eqref{eq:ef-bd}, we have
\bse
\|\wh{\phi}_j - s_j \phi_j\|_{\ca H_X} \le 4\sqrt{2} \| \wh{M} - M \|\op / \delta_j = O_p (r_n),
\ese
as desired. 
\end{proof}

\section{Proofs of results in Section \ref{sec:rate GSIR-II}}

\begin{proof}[Proof of Lemma~\ref{lem:alpha}]
Let $m_n = \lfloor \epsilon_n^{-1/\alpha} \rfloor$. Then, by Assumption~\ref{ass:alpha},
\bse
\sum_{j=1}^\infty \frac{\lambda_j}{\lambda_j + \epsilon_n}
\le \sum_{j=1}^{m_n} 1 + \epsilon_n^{-1} \sum_{j=m_n+1}^\infty \lambda_j \asymp m_n + \epsilon_n^{-1} \int_{m_n}^\infty x^{-\alpha}dx \asymp \epsilon_n^{-1/\alpha}.
\ese
\end{proof}

\begin{proof}[Proof of Theorem~\ref{thm:r-rate-2}]
Define $\wh{Q}(t) = (\wh{\Sigma}_{XX}+t I)^{-1/2}$, $Q(t) = (\Sigma_{XX}+tI)^{-1/2}$, and let
\bse
\wh{R}\reg' = \wh{Q}(\epsilon_n)\wh{\Sigma}_{XX}R_{XY}, \quad \wh{R}\res' = \wh{Q}(\epsilon_n)\wh{\Sigma}_{XU}, \quad R_n'=Q(\epsilon_n)\Sigma_{XY}.
\ese 
By Lemma~\ref{lem:sigma-xu}, we can decompose $\wh R'_{XY}$ into $ \wh R'_{\reg} + \wh R'_{\res}$, which gives the following decomposition
\be
\wh{R}_{XY}' - R_{XY}' = \wh{R}\res' + (\wh{R}\reg' - R_n') + (R_n' - R_{XY}'). \label{eq:R-decomposition}
\ee
We now derive the convergence rates of the three terms on the right-hand side separately. 

\paragraph{Convergence rate for  $\wh{R}\reg' - R_n'$.} 
By construction,
\begin{eqnarray}\label{eq:hat R reg '}
\wh{R}\reg' - R_n' = \wh{Q}(\epsilon_n)\wh{\Sigma}_{XX}R_{XY} - Q(\epsilon_n) \Sigma_{XX} R_{XY} = [\wh{Q}(\epsilon_n)\wh{\Sigma}_{XX} - Q(\epsilon_n) \Sigma_{XX}] R_{XY}.
\end{eqnarray}
By equation~(3.43) in Chapter~V of \cite{kato1995perturbation},  we have
\be
Q(\epsilon_n) = (\Sigma_{XX}+\epsilon_n I)^{-1/2}
&=&\frac{1}{\pi}\int_0^\infty t^{-1/2}(\Sigma_{XX}+\epsilon_nI + tI)^{-1} dt \nonumber \\
&=& \frac{1}{\pi}\int_0^\infty t^{-1/2}Q^2(\epsilon_n + t) dt,\label{eq:q-eps}\\
\wh{Q}(\epsilon_n) = (\wh{\Sigma}_{XX}+\epsilon_n I)^{-1/2}
&=&\frac{1}{\pi}\int_0^\infty t^{-1/2}(\wh{\Sigma}_{XX}+\epsilon_nI + tI)^{-1} dt\nonumber\\
&=&\frac{1}{\pi}\int_0^\infty t^{-1/2}\wh{Q}^2(\epsilon_n + t) dt.\label{eq:qhat-eps}
\ee
Hence,
\be\label{eq:hat Q epsilon n}
 \wh{Q}(\epsilon_n)\wh{\Sigma}_{XX} - Q(\epsilon_n)\Sigma_{XX}
= \frac{1}{\pi}\int_0^\infty t^{-1/2} \left[ \wh{Q}^2(\epsilon_n + t)\wh{\Sigma}_{XX} - Q^2(\epsilon_n + t)\Sigma_{XX} \right] dt.
\ee
Since $\wh{Q}^2(\cdot)$ and $\wh{\Sigma}_{XX}$ commute, and $Q^2(\cdot)$ and $\Sigma_{XX}$ commute, we have
\be\label{eq:hat Q 2 u}
\wh{Q}^2(u)\wh{\Sigma}_{XX} - Q^2(u)\Sigma_{XX} 
&=& \wh{Q}^2(u) [\wh{\Sigma}_{XX} Q^{-2}(u) - \wh{Q}^{-2}(u) \Sigma_{XX}] Q^2(u) \n\\
&=& u \wh{Q}^2(u) (\wh{\Sigma}_{XX} - \Sigma_{XX}) Q^2(u). 
\ee
Therefore, by \eqref{eq:hat R reg '}, \eqref{eq:hat Q epsilon n}, and \eqref{eq:hat Q 2 u},  
\bse
&& \|\wh{R}\reg' - R_n'\|\op
= \|[\wh{Q}(\epsilon_n)\wh{\Sigma}_{XX} - Q(\epsilon_n)\Sigma_{XX}] R_{XY} \|\op \\
&&\le \frac{1}{\pi}\int_0^\infty t^{-1/2} \|(\epsilon_n+t) \wh{Q}^2(\epsilon_n+t) \|\op \|\wh{\Sigma}_{XX} - \Sigma_{XX}\|\op \|Q^2(\epsilon_n+t) R_{XY}\|\op dt.
\ese
Note that $\|(\epsilon_n+t) \wh{Q}^2(\epsilon_n+t) \|\op \le \|I\|\op = 1$, and $\|\wh{\Sigma}_{XX} - \Sigma_{XX}\|\op = O_p(n^{-1/2})$. Thus,
 \bse
\|[\wh{Q}(\epsilon_n)\wh{\Sigma}_{XX} - Q(\epsilon_n)\Sigma_{XX}] R_{XY} \|\op 
&\!\!\!\le&\!\!\! \frac{1}{\pi}  \|\wh{\Sigma}_{XX} - \Sigma_{XX}\|\op \int_0^\infty t^{-1/2}  \|Q^2(\epsilon_n+t) R_{XY}\|\op dt \\
&\!\!\!=&\!\!\! O_p ( n^{-1/2} ) \int_0^\infty t^{-1/2}  \|Q^2(\epsilon_n+t) R_{XY}\|\op dt. 
\ese
By Assumption~\ref{ass:beta}, $R_{XY} = \Sigma_{XX}^\dagger \Sigma_{XX}^{1+\beta}S_{XY} = \Sigma_{XX}^\beta S_{XY}$.
 We consider the following two cases.
\begin{enumerate}
\item  If $0<\beta<1/2$, then
\bse
\|Q^2(\epsilon_n+t) R_{XY}\|\op 
&=& \|Q^2(\epsilon_n+t) \Sigma_{XX}^\beta S_{XY}\|\op \\
&\le& \|(\Sigma_{XX}+(\epsilon_n+t)I)^{-1} \Sigma_{XX}^\beta\|\op \|S_{XY}\|\op \\
&\le& \|(\Sigma_{XX}+(\epsilon_n+t)I)^{\beta-1} \|\op \|S_{XY}\|\op \\
&\le& (\epsilon_n+t)^{\beta-1} \|S_{XY}\|\op.
\ese
  Thus,
\bse
\int_0^\infty t^{-1/2}  \|Q^2(\epsilon_n+t) R_{XY}\|\op dt
\le \|S_{XY}\|\op \int_0^\infty t^{-1/2}  (\epsilon_n+t)^{\beta-1} dt.
\ese
We consider the integral in the right-hand side in detail. On $[0,\epsilon_n]$, we have
\bse
\int_0^{\epsilon_n} t^{-1/2}  (\epsilon_n+t)^{\beta-1} dt \le \epsilon_n^{\beta-1} \int_0^{\epsilon_n} t^{-1/2} dt = \epsilon_n^{\beta-1} 2 t^{1/2} \Big|_0^{\epsilon_n} = 2 \epsilon_n^{\beta-1} \epsilon_n^{1/2} = 2\epsilon_n^{\beta-1/2}.
\ese
On $(\epsilon_n,\infty)$, we have
\bse
\int_{\epsilon_n}^\infty t^{-1/2}  (\epsilon_n+t)^{\beta-1} dt \le  \int_{\epsilon_n}^\infty t^{\beta-3/2}dt = (\beta-1/2)^{-1} t^{\beta-1/2}\Big|_{\epsilon_n}^\infty = (1/2-\beta)^{-1} \epsilon_n^{\beta-1/2}.
\ese
Thus, when $0<\beta<1/2$, we have 
\bse
\int_0^\infty t^{-1/2}  (\epsilon_n+t)^{\beta-1} dt =  O(\epsilon_n^{\beta-1/2}),
\ese
 which implies that
\bse
\|Q^2(\epsilon_n+t) R_{XY}\|\op = O(\epsilon_n^{\beta-1/2}).
\ese

\item If $\beta\ge 1/2$, take any $0<\gamma<1/2$, and then
 \bse
\|Q^2(\epsilon_n+t) R_{XY}\|\op 
&=& \|Q^2(\epsilon_n+t) \Sigma_{XX}^\beta S_{XY}\|\op \\
&\le& \|(\Sigma_{XX}+(\epsilon_n+t)I)^{-1} \Sigma_{XX}^{\gamma}\|\op \|\Sigma_{XX}^{\beta-\gamma}\|\op \|S_{XY}\|\op \\
&\le& \|(\Sigma_{XX}+(\epsilon_n+t)I)^{\gamma-1} \|\op \|\Sigma_{XX}^{\beta-\gamma}\|\op\|S_{XY}\|\op \\
&\le& (\epsilon_n+t)^{\gamma-1} \|\Sigma_{XX}^{\beta-\gamma}\|\op \|S_{XY}\|\op.
\ese
Thus,
\bse
\int_0^\infty t^{-1/2}  \|Q^2(\epsilon_n+t) R_{XY}\|\op dt
\le \|\Sigma_{XX}^{\beta-\gamma}\|\op \|S_{XY}\|\op \int_0^\infty t^{-1/2}  (\epsilon_n+t)^{\gamma-1} dt.
\ese
Using the same argument as in the case of $0<\beta<1/2$, where $\beta$ is replaced by $\gamma$, we have
\bse
\|Q^2(\epsilon_n+t) R_{XY}\|\op = O (\epsilon_n^{\gamma-1/2})
\ese
for any $0<\gamma<1/2$. 

Note that  the above rate is sub-polynomial in the sense that, if we take $\gamma$ to be arbitrarily close to $1/2$, it is slower than any pre-assigned $n^{-c}$ rate.    Nevertheless,  we cannot conclude an  $O(1)$ rate for this term. 
\end{enumerate}
Summarizing the two cases above, we have
\be\label{eq:rreg-rn-rate}
\|\wh{R}\reg' - R_n'\|\op = 
\begin{cases}
    O_p(n^{-1/2} \epsilon_n^{\beta-1/2}), & \text{ if }\beta<1/2,\\
    O_p(n^{-1/2} \epsilon_n^{\gamma-1/2}), & \text{ for any } 0<\gamma<1/2, \text{ if }\beta\ge 1/2.
\end{cases}
\ee

\paragraph{Convergence rate for $R_n'-R_{XY}'$.}
Since,  by Assumption~\ref{ass:beta},   $\Sigma_{XY}=\Sigma_{XX}^{1+\beta}S_{XY}$, we have
\bse
R_n'-R_{XY}'
=[Q(\epsilon_n)\Sigma_{XX}^{1+\beta}-\Sigma_{XX}^{1/2+\beta}]S_{XY}.
\ese
Hence,
\be\label{eq:rn-r0}
\|R_n'-R'_{XY}\|\op
\le \|S_{XY}\|\op
\|Q(\epsilon_n)\Sigma_{XX}^{1+\beta}-\Sigma_{XX}^{1/2+\beta}\|\op.
\ee
We now bound the term $\|Q(\epsilon_n)\Sigma_{XX}^{1+\beta}-\Sigma_{XX}^{1/2+\beta}\|\op$.  Define the functions
\be
\psi_{\epsilon_n}(\lambda) = (\lambda+\epsilon_n)^{-1/2}\lambda^{1+\beta} - \lambda^{1/2 + \beta}, \ \lambda \ge 0, \quad   \psi_{\epsilon_n}(\Sigma_{XX}) = \sum_{j=1}^\infty \psi_{\epsilon_n}(\lambda_j) (\varphi_j \otimes \varphi_j). 
\ee 
Note that the argument of $\psi_{\epsilon_n} (\lambda)$ is a scalar and the argument of $\psi_{\epsilon_n} (\Sigma_{XX})$ is a linear operator. 
By construction, $\psi_{\epsilon_n}(\Sigma_{XX}) = Q(\epsilon_n) \Sigma_{XX}^{1+\beta} - \Sigma_{XX}^{1/2 + \beta} $. 
Hence, by continuous functional calculus, (see, for example, \cite{Conway1990} Chapter~II item 7.11 (b) or \cite{reed1980methods} Theorem~VII.1), we have 
\bse
\|Q(\epsilon_n)\Sigma_{XX}^{1+\beta}-\Sigma_{XX}^{1/2+\beta}\|\op
&=&\sup_{j\ge 1}
\left|(\lambda_j+\epsilon_n)^{-1/2}\lambda_j^{1+\beta}-\lambda_j^{1/2+\beta}\right|\n\\
&\le&\sup_{0<\lambda\le\lambda_1}
\left|(\lambda+\epsilon_n)^{-1/2}\lambda^{1+\beta}-\lambda^{1/2+\beta}\right| \n \\
&=& \sup_{0 < \lambda \leq \lambda_1} \lambda^{1/2+\beta}\left(1- \sqrt{\frac{\lambda}{\lambda+\epsilon_n}}\right)
\ese
To bound the right-hand side above, let
\bse
g_\beta(\lambda)
= \lambda^{1/2+\beta}\left(1- \sqrt{\frac{\lambda}{\lambda+\epsilon_n}}\right), \quad 0 < \lambda \le \lambda_1. 
\ese
Clearly,
\bse
g_\beta(\lambda)= \frac{\lambda^{1/2+\beta}\epsilon_n}{\sqrt{\lambda+\epsilon_n}(\sqrt{\lambda+\epsilon_n}+\sqrt{\lambda})} 
\le \frac{\lambda^{1/2+\beta}\epsilon_n}{\lambda+\epsilon_n}.
\ese
If $0<\lambda<\epsilon_n$, then
\bse
g_\beta(\lambda) \le \frac{\epsilon_n^{1/2+\beta}\epsilon_n}{\epsilon_n} = \epsilon_n^{1/2+\beta}.
\ese
 If $\epsilon_n\le\lambda\le\lambda_1$, then 
\bse
g_\beta(\lambda) \le \frac{\lambda^{1/2+\beta}\epsilon_n}{\lambda} = \epsilon_n \lambda^{-1/2+\beta}.
\ese
To further analyze the order of $g_\beta (\lambda)$  when $\epsilon_n\le\lambda\le\lambda_1$ according to the values of $\beta$, consider the following cases:
\begin{enumerate}
    \item If $\beta<1/2$, then $\lambda^{-1/2+\beta}$ is a decreasing function of $\lambda$, which is maximized at $\lambda=\epsilon_n$. So
    \bse
    g_\beta(\lambda)\le \epsilon_n \epsilon_n^{-1/2+\beta} = \epsilon_n^{1/2+\beta}.
    \ese
    \item If $\beta>1/2$, then $\lambda^{-1/2+\beta}$ is an increasing function of $\lambda$, which is maximized at $\lambda=\lambda_1$. So 
    \bse
    g_\beta(\lambda)\le \epsilon_n \lambda_1^{-1/2+\beta}.
    \ese
    \item If $\beta=1/2$, then $g_\beta(\lambda) \le \epsilon_n$.
\end{enumerate}
 Summarizing the results above, we have the following result:
\begin{enumerate}
    \item If $\beta<1/2$, then 
    \bse
    \sup_{0<\lambda\le\lambda_1} g_\beta(\lambda) \le \epsilon_n^{1/2+\beta}.
    \ese
    \item If $\beta\ge 1/2$, then
    \bse
    \sup_{0<\lambda\le\lambda_1} g_\beta(\lambda) \le \max\{\epsilon_n^{1/2+\beta}, \epsilon_n \lambda_1^{-1/2+\beta}\} = O(\epsilon_n).
    \ese
\end{enumerate}
Combining the above two cases gives
\bse
\sup_{0<\lambda\le\lambda_1} g_\beta(\lambda) = O(\epsilon_n^{(1/2+\beta)\land 1}).
\ese
It follows that $\|Q(\epsilon_n)\Sigma_{XX}^{1+\beta}-\Sigma_{XX}^{1/2+\beta}\|\op = O(\epsilon_n^{(1/2+\beta)\land 1}).$ Substituting this bound into \eqref{eq:rn-r0} and using that $S_{XY}$ is bounded under Assumption~\ref{ass:beta}, we have 
\be\label{eq:bias-rate}
\|R_n'-R_{XY}'\|\op = O(\epsilon_n^{(1/2+\beta)\land 1}).
\ee

\paragraph{Convergence rate for $\wh{R}\res'$.} We further decompose $\wh{R}\res'$ into
\be\label{eq:rres-decomp}
\wh{R}\res' = (\wh{Q}(\epsilon_n)\wh{\Sigma}_{XU} - \wh{Q}(\epsilon_n)\wt{\Sigma}_{XU}) + (\wh{Q}(\epsilon_n)\wt{\Sigma}_{XU} - Q(\epsilon_n)\wt{\Sigma}_{XU}) + Q(\epsilon_n)\wt{\Sigma}_{XU}.
\ee
Since $\|\wh{Q}(\epsilon_n)\|\op \le \epsilon_n^{-1/2}$ and $\|\wh{\Sigma}_{XU} - \wt{\Sigma}_{XU}\|\hs = O_p(n^{-1})$ by Lemma~\ref{lem:sigma-xu-rate}, we have 
\be\label{eq:hat Q epsilon n hat}
\|\wh{Q}(\epsilon_n)\wh{\Sigma}_{XU} - \wh{Q}(\epsilon_n)\wt{\Sigma}_{XU}\|\op = O_p(n^{-1} \epsilon_n^{-1/2}).
\ee
We now find the rate of $\|\wh{Q}(\epsilon_n)\wt{\Sigma}_{XU} - Q(\epsilon_n)\wt{\Sigma}_{XU}\|\op$. By \eqref{eq:q-eps} and \eqref{eq:qhat-eps}, we have
\bse 
\wh{Q}(\epsilon_n) - Q(\epsilon_n)
&=& \frac{1}{\pi}\int_0^\infty t^{-1/2} \left[ \wh{Q}^2(\epsilon_n + t) - Q^2(\epsilon_n + t) \right] dt \n \\
&=& \frac{1}{\pi}\int_0^\infty t^{-1/2} \left[ \wh{Q}^2(\epsilon_n + t) (\Sigma_{XX} - \wh{\Sigma}_{XX}) Q^2(\epsilon_n + t) \right] dt.
\ese
Hence,
\be\label{eq:norm hat Q epsilon n}
\|\wh{Q}(\epsilon_n)\wt{\Sigma}_{XU} - Q(\epsilon_n)\wt{\Sigma}_{XU}\|\op \hspace{3in}\n\\
\le \frac{1}{\pi}\int_0^\infty t^{-1/2} \|\wh{Q}^2(\epsilon_n + t)\|\op \|\Sigma_{XX} - \wh{\Sigma}_{XX}\|\op \|Q^2(\epsilon_n + t)\wt{\Sigma}_{XU}\|\op dt.
\ee
Noticing that  $\|\wh{Q}^2(\epsilon_n + t)\|\op \le (\epsilon_n + t)^{-1}$ and $\|\wh{\Sigma}_{XX} - \Sigma_{XX}\|\op = O_p(n^{-1/2})$ by Lemma~\ref{lem:sigmaxx-hat-rate}, we focus on the rate of $\|Q^2(\epsilon_n + t)\wt{\Sigma}_{XU}\|\op$.

Since, for any $u > 0$, each eigenvalue of $Q^2 (u)$ is a nonincreasing function of $u$, we have, 
for any $\psi \in \ca H_X$, $\| Q^2 (\epsilon_n + t) \psi \|_{\ca H_X} \le \| Q^2 ( \epsilon_n ) \psi \|_{\ca H_X}$. In particular, for any $\theta \in \ca H_Y$,  
\bse
  \| Q^2 (\epsilon_n + t) \wt \Sigma_{XU} \theta  \|_{\ca H_X} \le \| Q^2 ( \epsilon_n ) \wt \Sigma_{XU} \theta \|_{\ca H_X}.  
\ese
Hence $\| Q^2 ( \epsilon_n + t) \wt \Sigma_{XU} \| \op \le \| Q^2 ( \epsilon_n) \wt \Sigma_{XU} \| \op$.
By \eqref{eq:res-3}, noticing that $Q^2 (\epsilon_n) = V_n$, we have 
\be\label{eq:qsq-sigma-xu-rate}
 \|Q^2(\epsilon_n)\wt{\Sigma}_{XU}\|\op   =O_p(n^{-1/2}\epsilon_n^{-(\alpha+1)/(2\alpha)}). 
\ee

Therefore, the right-hand side of (\ref{eq:norm hat Q epsilon n}) is no more than 
\be\label{eq:norm Sigma XX}
&& \|\Sigma_{XX} - \wh{\Sigma}_{XX}\|\op \, \|Q^2 ( \epsilon_n ) \wt{\Sigma}_{XU}\|\op \, \, \frac{1}{\pi}\int_0^\infty  t^{-1/2} (\epsilon_n + t)^{-1}  dt \n\\
&& = O_p ( n^{-1/2}) \, O_p(n^{-1/2}\epsilon_n^{-(\alpha+1)/(2\alpha)}) \, \int_0^\infty  t^{-1/2} (\epsilon_n + t)^{-1}  dt. \hspace{.1in}
\ee
Since
\be\label{eq:int 0 infty t}
\int_0^\infty t^{-1/2} (\epsilon_n + t)^{-1}dt = 2\int_0^\infty (\epsilon_n + s^2)^{-1}ds = 2 \epsilon_n^{-1/2} \arctan(s)\Big|_0^\infty = \pi \epsilon_n^{-1/2}, 
\ee
we have  
\be\label{eq:Q epsilon n tilde}
\begin{split}
\|\wh{Q}(\epsilon_n)\wt{\Sigma}_{XU} - Q(\epsilon_n)\wt{\Sigma}_{XU}\|\op
&=&O_p(n^{-1/2})O_p(n^{-1/2}\epsilon_n^{-(\alpha+1)/(2\alpha)}) \epsilon_n^{-1/2} \\
&=& O_p(n^{-1}\epsilon_n^{-1-1/(2\alpha)}). \hspace{1.2in}
\end{split}
\ee

To derive the convergence rate of the last term in \eqref{eq:rres-decomp}, 
note that 
\bse
 Q(\epsilon_n)\wt{\Sigma}_{XU}  = n^{-1}\sumi Q(\epsilon_n)[\{\ka(\cdot, X_i ) - \mu_X\} \otimes U_i].
\ese
Note that $(X_1,U_1), \dots, (X_n, U_n)$ are i.i.d., 
by the same arguments as \eqref{eq:e-vn-sigma-xu-hs2} and \eqref{eq:vku-hs2} with $V_n$ replaced by $Q(\epsilon_n)$, we have 
\bse
&&E(\|Q(\epsilon_n) \wt{\Sigma}_{XU}\|_\HS^2) 
= n^{-1}E (\|Q(\epsilon_n)[\{\ka(\cdot, X) - \mu_X\} \otimes U]\|_\HS^2), \\
&&\|Q(\epsilon_n)[\{\ka(\cdot, X) - \mu_X\} \otimes U]\|_\HS^2 \n 
= \|U\|^2_{\mH_Y} \sum_{j=1}^\infty (\lambda_j + \epsilon_n)^{-1}\zeta_j^2.
\ese
So 
\bse
E(\|Q(\epsilon_n) \wt{\Sigma}_{XU}\|_\HS^2) = n^{-1} E\left\{\|U\|^2_{\mH_Y} \sum_{j=1}^\infty (\lambda_j + \epsilon_n)^{-1}\zeta_j^2\right\}.
\ese
Applying a similar argument to \eqref{eq:e-vku-hs2} with $V_n$ replaced by $Q(\epsilon_n)$, 
noticing that \eqref{eq:u:bd} still holds under Assumption~\ref{ass:k bdd}, 
we have
\bse
E\left\{\|U\|^2_{\mH_Y} \sum_{j=1}^\infty (\lambda_j + \epsilon_n)^{-1}\zeta_j^2\right\} 
\le 4 C E\left\{ \sum_{j=1}^\infty (\lambda_j + \epsilon_n)^{-1}\zeta_j^2\right\} 
= 4C \sum_{j=1}^\infty \frac{\lambda_j}{\lambda_j + \epsilon_n}
= O(\epsilon_n^{-1/\alpha}),
\ese
 where the last line follows from Lemma~\ref{lem:alpha}. Thus we have shown that 
\bse
E\|Q(\epsilon_n) \wt{\Sigma}_{XU}\|_{\mathrm{HS}}^2=O(n^{-1}\epsilon_n^{-1/\alpha}).
\ese
 By Markov's inequality, for any $K>0$,
\bse
P(\| Q(\epsilon_n) \wt{\Sigma}_{XU}\|_{\HS} > n^{-1/2}\epsilon_n^{-1/(2\alpha)} K) 
&\le& (n^{-1/2}\epsilon_n^{-1/(2\alpha)}K)^{-2} E \| Q(\epsilon_n) \wt{\Sigma}_{XU}\|_{\HS}^2 \\
&=& n \epsilon_n^{1/\alpha} K^{-2} E \| Q(\epsilon_n) \wt{\Sigma}_{XU}\|_{\HS}^2 = O(K^{-2}),
\ese
and consequently, 
\be\label{eq:norm Q epsilon n tilde}
 \|Q(\epsilon_n) \wt{\Sigma}_{XU}\|_{\OP} \le  \| Q(\epsilon_n) \wt{\Sigma}_{XU}\|_{\HS} = O_p(n^{-1/2}\epsilon_n^{-1/(2\alpha)}) . 
\ee

Returning now to \eqref{eq:rres-decomp}, by the triangular inequality, 
\bse 
\|\wh{R}\res'\|\op 
\le \|\wh{Q}(\epsilon_n)\wh{\Sigma}_{XU} - \wh{Q}(\epsilon_n)\wt{\Sigma}_{XU}\|\op + \|\wh{Q}(\epsilon_n)\wt{\Sigma}_{XU} - Q(\epsilon_n)\wt{\Sigma}_{XU}\|\op + \|Q(\epsilon_n)\wt{\Sigma}_{XU}\|\op.
\ese
Substitute \eqref{eq:hat Q epsilon n hat},  \eqref{eq:Q epsilon n tilde}, and (\ref{eq:norm Q epsilon n tilde}) into the right and side  to obtain 
\be\label{eq:rres-rate}
\|\wh{R}\res'\|\op 
&=& O_p(n^{-1} \epsilon_n^{-1/2}) + O_p(n^{-1}\epsilon_n^{-1-1/(2\alpha)}) + O_p(n^{-1/2}\epsilon_n^{-1/(2\alpha)}) \n\\
&=& O_p(n^{-1}\epsilon_n^{-1-1/(2\alpha)} + n^{-1/2}\epsilon_n^{-1/(2\alpha)}).
\ee

\paragraph{Convergence rate for $\| \wh R_{XY}' - R_{XY} ' \|_{\mathrm{OP}}$.} Combining \eqref{eq:R-decomposition}, \eqref{eq:rreg-rn-rate}, \eqref{eq:bias-rate} and \eqref{eq:rres-rate}, we have the following results:
\begin{enumerate}
\item If $0<\beta<1/2$, then
\bse
\|\wh{R}_{XY}' - R_{XY}'\|\op = O_p(n^{-1/2} \epsilon_n^{\beta-1/2} + \epsilon_n^{\beta+1/2} + n^{-1}\epsilon_n^{-1-1/(2\alpha)} + n^{-1/2}\epsilon_n^{-1/(2\alpha)}).
\ese
\item If $\beta \ge 1/2$, then for any $0<\gamma<1/2$,
\be\label{eq:r-rate-b2} 
\|\wh{R}_{XY}' - R_{XY}'\|\op = O_p(n^{-1/2} \epsilon_n^{\gamma-1/2} + \epsilon_n + n^{-1}\epsilon_n^{-1-1/(2\alpha)} + n^{-1/2}\epsilon_n^{-1/(2\alpha)}).
\ee

Since \eqref{eq:r-rate-b2} holds for any $0<\gamma<1/2$ and the first term $n^{-1/2}\epsilon_n^{\gamma -1/2}$ decreases as $\gamma$ increases, 
 if we take $1/2 - 1/(2\alpha) < \gamma < 1/2$, then the first term will be dominated by the fourth term $n^{-1/2}\epsilon_n^{-1/(2\alpha)}$. Therefore, \eqref{eq:r-rate-b2} can be simplified as
\bse
\|\wh{R}_{XY}' - R_{XY}'\|\op = O_p(\epsilon_n + n^{-1}\epsilon_n^{-1-1/(2\alpha)} + n^{-1/2}\epsilon_n^{-1/(2\alpha)}).
\ese
\end{enumerate}
Combining the above two cases gives the desired rate \eqref{eq:r-rate-2}.
\end{proof}

\begin{proof}[Proof of Corollary \ref{cor:f-rate-2}]
By the definitions of $\psi _j$ and $\wh{\psi}_j$, we have
\bse
\psi_j &=& \mu_j^{-1} M' \psi_j = \mu_j^{-1} \Sigma_{XX}^{\dagger 1/2} \Sigma_{XY} \Sigma_{YX} \Sigma_{XX}^{\dagger 1/2} \psi_j, \\
\wh{\psi}_j &=& \wh{\mu}_j^{-1} \wh{M}' \wh{\psi}_j = \wh{\mu}_j^{-1} \wh{Q}(\epsilon_n) \wh{\Sigma}_{XY} \wh{\Sigma}_{YX} \wh{Q}(\epsilon_n) \wh{\psi}_j.
\ese
Therefore,  
\bse
\eta_j = \Sigma_{XX}^{\dagger 1/2} \psi_j
= \mu_j^{-1} \Sigma_{XX}^{\dagger} \Sigma_{XY} \Sigma_{YX} \Sigma_{XX}^{\dagger 1/2} \psi_j 
= \mu_j^{-1} R_{XY} R_{XY}'^* \psi_j.
\ese
and
\bse
\wh \eta_j = \wh{Q}(\epsilon_n)\wh{\psi}_j 
= \wh{\mu}_j^{-1} \wh{Q}^2(\epsilon_n) \wh{\Sigma}_{XY} \wh{\Sigma}_{YX} \wh{Q}(\epsilon_n) \wh{\psi}_j 
= \wh{\mu}_j^{-1}  \wh{R}_{XY} \wh{R}_{XY}'^*  \wh{\psi}_j,
\ese
Consequently, 
\be\label{eq:f-rate2-decomp}
\wh \eta_j  - s_j' \eta_j  
&=& \wh{\mu}_j^{-1}  \wh{R}_{XY} \wh{R}_{XY}'^*  \wh{\psi}_j -  s_j'\mu_j^{-1} R_{XY} R_{XY}'^* \psi_j \n\\
&=&  \wh \mu_j^{-1} \wh R_{XY}\wh R'^*_{XY} (\wh \psi_j - s_j' \psi_j) + s_j' \wh \mu_j^{-1} \wh R_{XY}(\wh R_{XY}'^* - R_{XY}'^*) \psi_j \n \\
&& + s_j' \wh \mu_j^{-1} (\wh R_{XY} - R_{XY})R_{XY}'^* \psi_j +  s_j'(\wh \mu_j^{-1} - \mu_j^{-1}) R_{XY}R_{XY}'^* \psi_j.
\ee
Using a modified version of Lidskii’s inequality (see, for example, Section 2.2 of \cite{koltchinskii2017normal}), we have
\bse
| \wh{\mu}_j - \mu_j | \le \|\wh{M}'-M'\|\op = O_p(r_n').
\ese
Since $\mu_j \ge \mu_d > 0$ and $r_n' = o(1)$, we have $P(\wh{\mu}_j > \mu_j/2) \to 1$. So with probability tending to 1,  we have 
$
| \wh{\mu}_j^{-1} - \mu_j^{-1} | \le 2 \mu_j^{-2} |\wh{\mu}_j - \mu_j|, 
$
which implies
$
| \wh{\mu}_j^{-1} - \mu_j^{-1} | = O_p(r_n').
$
  Moreover, by Theorem~\ref{thm:r-rate}, Theorem~ \ref{thm:r-rate-2}, and Corollary \ref{cor:m-rate-2}, we have
\bse
\|\wh{R}_{XY} - R_{XY}\|\op = O_p(r_n), \quad
\|\wh{R}_{XY}' - R_{XY}'\|\op = O_p(r_n'), \quad
\|\wh{\psi}_j - s_j' \psi_j\|_{\mH_X} = O_p(r_n').
\ese
Since $R_{XY}$ and $R_{XY}'$ are bounded and $r_n,r_n' = o(1)$, taking the $\mH_X$-norm in \eqref{eq:f-rate2-decomp}, we have
\bse
\|\wh \eta_j - s_j' \eta_j \|_{\mH_X} &=&O_p(| \wh{\mu}_j^{-1} - \mu_j^{-1} |) + O_p(\|\wh{R}_{XY} - R_{XY}\|\op) \\
&& + O_p(\|\wh{R}_{XY}' - R_{XY}'\|\op) + O_p(\|\wh{\psi}_j - s_j' \psi_j\|_{\mH_X}) \\
&=& O_p(r_n + r_n')\\
&=& O_p(r_n), 
\ese
as desired.
\end{proof}

\bibliographystyle{agsm}
\bibliography{reference}
\end{document}